\numberwithin{equation}{section}
\def\wt{\operatorname{wt}}
\newcommand{\procname}[1]{\texttt{#1}}
\def\RandomPoint{\procname{RandomPoint}}
\def\RandomPointOn{\procname{RandomPointOn}}
\def\RandomLine{\procname{RandomLine}}
\def\RandomLineThrough{\procname{RandomLineThrough}}
\def\DecorateSide{\procname{DecorateSide}}
\def\DecorateVertex{\procname{DecorateVertex}}
\def\ConstructSideB{\procname{ConstructSide2}}
\def\DecorateSideB{\procname{DecorateSide2}}
\def\ConstructVertexB{\procname{ConstructVertex2}}
\def\DecorateVertexB{\procname{DecorateVertex2}}
\def\DecorateRandomly{\procname{DecorateRandomly}}
\def\main{\procname{main}}
\def\TripleConjugate{\procname{TripleConjugate}}
\def\ProjectiveTransformation{\procname{ProjectiveTransformation}}
\def\ProjectiveTransformationB{\procname{ProjectiveTransformation2}}
\newcommand{\meet}[2]{#1 \cap #2}
\newcommand{\join}[2]{\overleftrightarrow{#1 #2}}
\newtheorem{thm}{Theorem}[section]
\newtheorem{prop}[thm]{Proposition}
\newtheorem{lem}[thm]{Lemma}
\newtheorem{cor}[thm]{Corollary}
\theoremstyle{definition}
\newtheorem{rem}[thm]{Remark}
\newtheorem{defin}[thm]{Definition}
\newtheorem{conj}[thm]{Conjecture}
\begin{document}
\title[On singularity confinement for the pentagram map]{On singularity confinement \\ for the pentagram map}
\author{Max Glick}
\address{Department of Mathematics, University of Michigan,
Ann Arbor, MI 48109, USA} \email{maxglick@umich.edu}

\subjclass[2010]{
05A15  
51A05, 
37J35, 
}
\keywords{pentagram map, singularity confinement, alternating sign matrix, decorated polygon}

\begin{abstract}
The pentagram map, introduced by R. Schwartz, is a birational map on the configuration space of polygons in the projective plane.

We study the singularities of the iterates of the pentagram map. We show that a ``typical'' singularity disappears after a finite number of iterations, a confinement phenomenon first discovered by Schwartz.  We provide a method to bypass such a singular patch by directly constructing the first subsequent iterate that is well-defined on the singular locus under consideration. The key ingredient of this construction is the notion of a decorated (twisted) polygon, and the extension of the pentagram map to the corresponding decorated configuration space.
\end{abstract}

\date{\today}
\thanks{Partially supported by NSF grants DMS-0943832 and DMS-1101152.} 

\maketitle

\tableofcontents

\section{Introduction}
The pentagram map, introduced by R. Schwartz \cite{S2}, is a geometric construction which produces one polygon from another.  Successive applications of this operation (cf. Figure \ref{figureT}) define a discrete dynamical system that has received considerable attention in recent years (see, e.g., \cite{S,OST,G,So,OST2}) due to its integrability properties and its connections to moduli spaces and cluster algebras.  This paper is devoted to the study of singularity confinement for the pentagram map, a phenomenon first observed experimentally by Schwartz.  Informally speaking, a singularity of a map at a point is said to be confined if some higher iterate of the map is well-defined at that point.  We investigate singularities of the pentagram map and prove confinement in several cases.

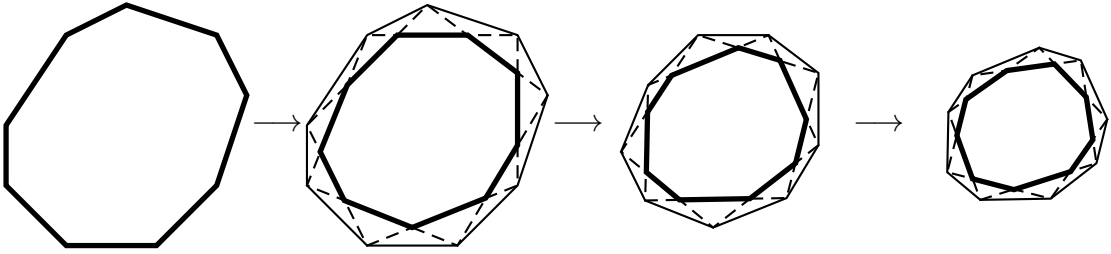
\begin{figure}[ht] \label{figureT}
\psset{unit=.8cm}
\begin{pspicture}(20,5)
\rput(-.5,-.5){
	\pspolygon[linewidth=2pt](1,2)(1,3)(2,4.5)(3,5)(4.5,4.5)(5,3.5)(4.5,2)(3.5,1)(2,1)
  \rput(5.5,3){$\longrightarrow$}

  \rput(5,0){
  \pspolygon(1,2)(1,3)(2,4.5)(3,5)(4.5,4.5)(5,3.5)(4.5,2)(3.5,1)(2,1)
  \pspolygon[linestyle=dashed](1,2)(2,4.5)(4.5,4.5)(4.5,2)(2,1)(1,3)(3,5)(5,3.5)(3.5,1)
  \pspolygon[linewidth=2pt](1.22,2.56)(1.67,3.67)(2.5,4.5)(3.67,4.5)(4.5,3.87)(4.5,2.67)(3.97,1.79)(2.75,1.3)(1.62,1.75)
  \rput(5.5,3){$\longrightarrow$}
  }
  
  \rput(10,0){
  \pspolygon(1.22,2.56)(1.67,3.67)(2.5,4.5)(3.67,4.5)(4.5,3.87)(4.5,2.67)(3.97,1.79)(2.75,1.3)(1.62,1.75)
  \pspolygon[linestyle=dashed](1.22,2.56)(2.5,4.5)(4.5,3.87)(3.97,1.79)(1.62,1.75)(1.67,3.67)(3.67,4.5)(4.5,2.67)(2.75,1.3)
  \pspolygon[linewidth=2pt](1.64,2.22)(1.66,3.22)(2.06,3.83)(3.17,4.29)(3.86,4.08)(4.30,3.10)(4.12,2.37)(3.36,1.78)(2.19,1.76)
  \rput(5.5,3){$\longrightarrow$}
  }
  
  \rput(15,0){
  \pspolygon(1.64,2.22)(1.66,3.22)(2.06,3.83)(3.17,4.29)(3.86,4.08)(4.30,3.10)(4.12,2.37)(3.36,1.78)(2.19,1.76)
  \pspolygon[linestyle=dashed](1.64,2.22)(2.06,3.83)(3.86,4.08)(4.12,2.37)(2.19,1.76)(1.66,3.22)(3.17,4.29)(4.30,3.1)(3.36,1.78)
  \pspolygon[linewidth=2pt](1.8,2.83)(1.95,3.43)(2.63,3.91)(3.42,4.02)(3.95,3.47)(4.06,2.76)(3.69,2.23)(2.75,1.93)(2.06,2.11)
  }
}
\end{pspicture}
\psset{unit=1cm}
\caption{Three iterations of the pentagram map in the space of 9-gons}
\end{figure}

The pentagram map is typically defined for objects called twisted polygons defined by Schwartz \cite{S}.  A \emph{twisted polygon} is a sequence $A=(A_i)_{i\in \mathbb{Z}}$ of points in the projective plane that is periodic modulo some projective transformation $\phi$, i.e., $A_{i+n} = \phi(A_i)$ for all $i \in \mathbb{Z}$.  We will place the additional restriction that every quadruple of consecutive points of $A$ be in general position.  Two twisted polygons $A$ and $B$ are said to be \emph{projectively equivalent} if there exists a projective transformation $\psi$ such that $\psi(A_i) = B_i$ for all $i$.  Let $\mathcal{P}_n$ denote the space of twisted $n$-gons modulo projective equivalence.  

It is convenient to also allow twisted polygons to be indexed by $\frac{1}{2}+\mathbb{Z}$ instead of $\mathbb{Z}$.  Let $\mathcal{P}_n^*$ denote the space of twisted $n$-gons indexed by $\frac{1}{2} + \mathbb{Z}$, modulo projective equivalence.

The \emph{pentagram map}, denoted $T$, inputs a twisted polygon $A$ and constructs a new twisted polygon $B$ defined by $B_i = \meet{\join{A_{i-\frac{3}{2}}}{A_{i+\frac{1}{2}}}}{\join{A_{i-\frac{1}{2}}}{A_{i+\frac{3}{2}}}}$.  Note that if $A$ is indexed by $\mathbb{Z}$ then $B$ is indexed by $\frac{1}{2}+\mathbb{Z}$ and vice versa.  The pentagram map preserves projective equivalence, so it induces maps
\begin{align*}
\alpha_1 &: \mathcal{P}_n^* \to \mathcal{P}_n \\
\alpha_2 &: \mathcal{P}_n \to \mathcal{P}_n^* 
\end{align*}

Schwartz \cite{S} gives coordinates $x_1,\ldots, x_{2n}$ defined generically on $\mathcal{P}_n$ and on $\mathcal{P}_n^*$.  These are naturally ordered cyclically, so let $x_{i+2n} = x_i$ for all $i \in \mathbb{Z}$.  Expressed in these coordinates, the maps $\alpha_1$ and $\alpha_2$ take a simple form.

\begin{prop}[{\cite[(7)]{S}}] \label{propalpha12x} Suppose that $(x_1,\ldots,x_{2n})$ are the $x$-coordinates of $A$.  If $A \in \mathcal{P}_n^*$ then 
\begin{equation}
x_j(\alpha_1(A)) = \begin{cases} 
                      x_{j-1}\frac{1-x_{j-3}x_{j-2}}{1-x_{j+1}x_{j+2}},
                      &j \textrm{ even} \\ 
                      x_{j+1}\frac{1-x_{j+3}x_{j+2}}{1-x_{j-1}x_{j-2}},
                      &j \textrm{ odd} \\ 
                   \end{cases} 
\label{alpha1x}
\end{equation}
Alternately, if $A \in \mathcal{P}_n$ then
\begin{equation}
x_j(\alpha_2(A)) = \begin{cases} 
                       x_{j+1}\frac{1-x_{j+3}x_{j+2}}{1-x_{j-1}x_{j-2}},
                       &j \textrm{ even} \\ 
                       x_{j-1}\frac{1-x_{j-3}x_{j-2}}{1-x_{j+1}x_{j+2}},
                       &j \textrm{ odd} \\ 
                   \end{cases}
\label{alpha2x}
\end{equation}
\end{prop}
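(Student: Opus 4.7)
The plan is to derive the formulas directly from the geometric definition of Schwartz's $x$-coordinates as corner invariants. Recall that $x_{2i-1}$ and $x_{2i}$ are cross-ratios anchored at the vertex $A_i$: each is the cross-ratio of four collinear points on one of the two edge-lines $\join{A_{i-1}}{A_i}$ or $\join{A_i}{A_{i+1}}$, consisting of the vertex, an adjacent vertex, and two points obtained by intersecting that edge-line with the ``second-neighbor'' diagonals $\join{A_{i-2}}{A_{i+1}}$, $\join{A_{i-1}}{A_{i+2}}$, etc. The first step is to write these down explicitly; after that, everything is bookkeeping with cross-ratios.

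Next I would identify the new vertices $B_i = \meet{\join{A_{i-3/2}}{A_{i+1/2}}}{\join{A_{i-1/2}}{A_{i+3/2}}}$ and pin down which four collinear points appear in the corner invariants of $B$ at $B_i$. The key tool is that the cross-ratio is invariant under projection from any point: given a cross-ratio on an edge of the $B$-polygon, I can project it through a conveniently chosen vertex $A_j$ onto an edge-line of the original polygon, where it can be reexpressed as a cross-ratio of four points already present in the definition of the old $x$-coordinates. Iterating two or three such perspectivities brings every new cross-ratio onto a common line in the $A$-configuration, at which point it becomes a rational function of the $x_k$. Collecting the factors into the form $1 - x_k x_{k+1}$ will produce the stated expressions.

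The main obstacle is purely combinatorial: choosing the right sequence of perspectivities and tracking indices so that the shifts in subscripts and the parity-switching behavior (since $\alpha_1$ swaps $\mathcal{P}_n^*$ with $\mathcal{P}_n$ and $\alpha_2$ does the reverse) come out exactly right. The four cases in the statement are related by the natural symmetries of the pentagram construction: reversing the orientation of the polygon interchanges the ``even'' and ``odd'' branches, while $\alpha_2$ is conjugate to $\alpha_1$ under this reversal. So in practice it suffices to carry out the cross-ratio chase carefully for one of the four formulas in \eqref{alpha1x}, and then deduce the remaining three by symmetry.
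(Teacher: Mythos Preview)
The paper does not prove this proposition; it is stated as a citation of Schwartz \cite[(7)]{S} and no argument is given. So there is no ``paper's own proof'' to compare against.

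Your outlined approach---computing the corner invariants of $B=T(A)$ by projecting cross-ratios through well-chosen vertices back onto edge-lines of $A$---is the standard way to derive these formulas and is essentially what Schwartz does in the cited reference. The symmetry reduction you describe (doing one case carefully and obtaining the other three by orientation reversal and the $\alpha_1\leftrightarrow\alpha_2$ duality) is also correct. What you have written is a plan rather than a proof: the actual cross-ratio chase, while routine, requires fixing a precise cross-ratio convention (the paper uses $\chi(a,b,c,d)=[b,a,c,d]$), carrying out the perspectivities explicitly, and verifying that the factors really combine into the form $1-x_kx_{k+1}$. None of this is difficult, but as written the proposal stops short of executing any of it.
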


We will be interested in $T^k$, the $k$th iterate of the pentagram map.  Defined on $\mathcal{P}_n$ it takes the form $T^k = \underbrace{\cdots \circ \alpha_2 \circ \alpha_1 \circ \alpha_2}_k$ and has image in either $\mathcal{P}_n$ or $\mathcal{P}_n^*$ depending on the parity of $k$.  By \eqref{alpha1x} and \eqref{alpha2x}, $T^k$ is a rational map.  The purpose of this paper is to better understand the singularities of the pentagram map and its iterates.

Let $A \in \mathcal{P}_n$ be a singular point of the pentagram map.  Then typically $A$ will be a singular point of $T^k$ for all $k$ less than some $m$, but not of $T^m$.  This phenomenon is known as \emph{singularity confinement} and was identified by Grammaticos, Ramani, and Papageorgiou \cite{GRP} as a feature common to many discrete integrable systems.  Now, the pentagram map is a discrete integrable system as proven by Ovsienko, Schwartz, and Tabachnikov \cite{OST,OST2} and Soloviev \cite{So}.  That singularity confinement holds in this setting has been observed experimentally by Schwartz.  The current paper seeks to understand singularity confinement for the pentagram map from both an algebraic and geometric perspective.

Algebraically, \eqref{alpha2x} suggests that a polygon $A \in \mathcal{P}_n$ is a singular point of the pentagram map whenever $x_{2i}(A)x_{2i+1}(A) = 1$ for some $i \in \mathbb{Z}$.  To check how many steps the singularity persists, one must determine for which $k$ the rational expression for $T^k$ has a vanishing denominator at the given point.  We use generating function formulas for these denominators from \cite{G} to better understand when this occurs.

What we discover is that the behavior of a singularity seems to depend on the set $S$ of integers $i$ for which $x_{2i}(A)x_{2i+1}(A) = 1$.  We call $S$ the \emph{type} of the singularity and attempt to understand when singularity confinement holds for generic polygons of a given type.  The simplest case is when $S$ consists of a single element, in which event the singularity is confined to two iterations (i.e. $A$ is a singular point of $T$ and $T^2$ but not of $T^3$).  More generally, suppose $S$ is a finite arithmetic progression with common difference equal to 1 or 2.  We prove that generic singularities of these types are confined to $l+1$ steps where $l$ is the size of the arithmetic progression.  

We do not have as complete an understanding of the situation for other singularity types.  If the number of sides $n$ of the polygon is odd, we show that singularity confinement holds generically for every type except the worst case $S=\{1,\ldots,n\}$.  In addition we have an upper bound for the number of iterations such singularities last.  The case of $n$ even seems to be more complicated and we only have a conjectural answer as to which types exhibit singularity confinement.

From a geometric perspective, the condition $x_{2i}(A)x_{2i+1}(A) = 1$ indicates that the triple of vertices $A_{i-2}$, $A_i$, and $A_{i+2}$ are collinear.  Although one can construct $B=T(A)$ in this case, the result will violate the condition that quadruples of consecutive vertices be in general position.  In fact, $B_{i-\frac{3}{2}}$, $B_{i-\frac{1}{2}}$, $B_{i+\frac{1}{2}}$, and $B_{i+\frac{3}{2}}$ will be collinear making it impossible to carry the construction any further.  The notion of singularity confinement also has a geometric interpretation.  If $A$ has a singularity which vanishes after $m$ steps, then one can approximate $A$ by nonsingular polygons, apply the construction $T^m$ to them, and take a limit to find $T^m(A)$.  Since $A$ is a regular point of $T^m$, the result of this procedure does not depend on the approximations of $A$.

Our main result on the geometric side is a straightedge construction of the first defined iterate $T^m(A)$ of a polygon $A$ of certain singularity types.  The basic idea is to fix, up to the first order, a family of approximations of $A$ by nonsingular polygons.  The data needed to accomplish this is encoded by a collection of points and lines which we call a decoration of $A$.  With this done, the iterates between $A$ and $T^m(A)$ become well-defined.  To determine $T^m(A)$, we iterate a procedure which constructs these intermediate polygons one by one.

This paper is organized as follows.  Section 2 reviews previous work on the pentagram map, including a non-recursive formula for $T^k$ as a rational map of the $x$-coordinates.  This map factors into polynomials, some properties of which are given in Section 3.  Section 4 identifies a hierarchy of singularity types of the pentagram map and establishes that generic polygons of these types exhibit singularity confinement.  The remainder of the paper addresses the problem of moving past singularities by constructing $T^m(A)$ from $A$ when $A$ is a singular point of $T,T^2,\ldots,T^{m-1}$.  An approach which works for the simplest singularity type is given in Section 5 along with a discussion of its limitations in handling more severe singularities.  Section~6 introduces decorated polygons which will serve as the underlying objects of the main construction.  In Section 7 we develop the procedure which is iterated in our main construction.  Section 8 states the main construction itself and discusses what is needed to prove its correctness for a given singularity type.  All algorithms are stated explicitly, but some contain steps which are nontrivial to accomplish via a straightedge construction.  In Appendix A we fill in the details for these steps.

The following notation will be used throughout.  If $a,b,k \in \mathbb{Z}$, $a \leq b$, $k \geq 1$ and $a \equiv b \pmod{k}$ then let $[a,b]_k$ denote the arithmetic progression
\begin{displaymath}
[a,b]_k = \{a,a+k,a+2k,\ldots,b\}
\end{displaymath}
Twisted polygons will be denoted by capital letters with individual vertices indexed by either $\mathbb{Z}$ or $\frac{1}{2} + \mathbb{Z}$.  The sides of a polygon (i.e. lines passing through two consecutive vertices) will be denoted by the corresponding lowercase letter and indexed using the opposite indexing scheme.  For instance, if $A$ is a twisted polygon indexed by $\mathbb{Z}$ then its vertices are denoted $A_i$ for $i \in \mathbb{Z}$ and its sides are denoted $a_j = \join{A_{j-\frac{1}{2}}}{A_{j+\frac{1}{2}}}$ for $j \in (\frac{1}{2} + \mathbb{Z})$.

\medskip

\textbf{Acknowledgments.} I thank Pavlo Pylyavskyy, Sergei Tabachnikov, and particularly Sergey Fomin for many valuable discussions during the course of this project.  I am grateful to Richard Schwartz who introduced me to this problem and explained to me his previous work in the area.

\section{Pentagram map background}
The \emph{cross ratio} of four real numbers $a,b,c,d$ is defined to be 
\begin{displaymath}
[a,b,c,d] = \frac{(a-b)(c-d)}{(b-c)(d-a)}
\end{displaymath}
This definition extends to the projective line, on which it gives a projective invariant of four points.  We will be interested in taking the cross ratio of four collinear points in the projective plane, or dually, the cross ratio of four lines intersecting at a common point.  To be consistent with notation from \cite{G}, let $\chi(a,b,c,d)$ denote the cross ratio taken in a different order: $\chi(a,b,c,d) = [b,a,c,d]$.

Let $A$ be a twisted polygon.  The $x$-coordinates of $A$ are defined by Schwartz \cite{S} as follows.  For each index $k$ of $A$ let
\begin{align*}
x_{2k}(A) &= \chi (A_{k-2},A_{k-1},B,D) \\
x_{2k+1}(A) &= \chi(A_{k+2},A_{k+1},C,D). 
\end{align*}
where $B=\meet{\join{A_{k-2}}{A_{k-1}}}{\join{A_k}{A_{k+1}}}$, $C=\meet{\join{A_{k-1}}{A_k}}{\join{A_{k+1}}{A_{k+2}}}$, and $D=\meet{\join{A_{k-2}}{A_{k-1}}}{\join{A_{k+1}}{A_{k+2}}}$ (see Figure \ref{figDefx}).  Now $x_{j+2n}=x_j$ for all $j \in \mathbb{Z}$, and as mentioned in the introduction, $x_1,\ldots,x_{2n}$ give a set of coordinates on $\mathcal{P}_n$ and on $\mathcal{P}_n^*$.

\begin{figure}
\begin{pspicture}(10,6)
\rput(2,0){
\psline[showpoints=true,linewidth=2pt](1,1)(3,1)(5,2)(5,3)(4,4)
\uput[d](1,1){$A_{k-2}$}
\uput[d](3,1){$A_{k-1}$}
\uput[dr](5,2){$A_k$}
\uput[ur](5,3){$A_{k+1}$}
\uput[ur](4,4){$A_{k+2}$}
\psline[showpoints=true,dotsize=2pt 5](3,1)(5,1)(5,2)(5.67,2.33)(5,3)
\psline[showpoints=true,dotsize=2pt 5](5,1)(7,1)(5.67,2.33)
\uput[d](5,1){$B$}
\uput[ur](5.67,2.33){$C$}
\uput[dr](7,1){$D$}
}
\end{pspicture}
\caption{The points involved in the definitions of $x_{2k}(A)$ and $x_{2k+1}(A)$} \label{figDefx}
\end{figure}
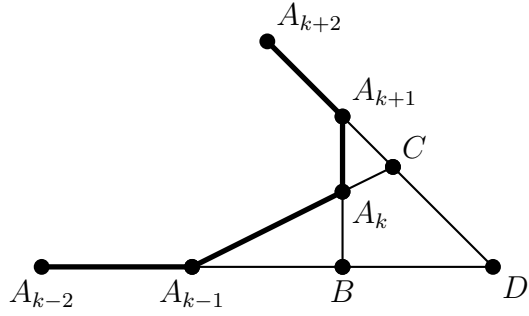

In \cite{G}, we work with related quantities called the $y$-parameters and denoted $y_j$ for $j \in \mathbb{Z}$.  These parameters are defined on $\mathcal{P}_n$ by 
\begin{displaymath}
y_j = \begin{cases}
         -(x_jx_{j+1})^{-1}, & j \textrm{ even} \\
         -x_jx_{j+1},        & j \textrm{ odd} \\
      \end{cases}
\end{displaymath}
and on $\mathcal{P}_n^*$ by 
\begin{displaymath}
y_j = \begin{cases}
         -x_jx_{j+1},        & j \textrm{ even} \\
         -(x_jx_{j+1})^{-1}, & j \textrm{ odd} \\
      \end{cases}
\end{displaymath}
We have that $y_{j+2n}=y_j$ for all $j$ due to the analogous property of the $x_j$.  However, $y_1,\ldots,y_{2n}$ do not give a set of coordinates as they satisfy the single relation $y_1y_2\cdots y_{2n} = 1$.

The $y$-parameters can equivalently be defined using cross ratios (see Figure \ref{figDefy}).  For each index $k$ of $A$
\begin{align} 
y_{2k}(A) &= -\left(\chi (\join{A_{k}}{A_{k-2}}, \join{A_{k}}{A_{k-1}}, \join{A_{k}}{A_{k+1}}, \join{A_k}{A_{k+2}})\right)^{-1} \label{yvertex} \\
y_{2k+1}(A) &= -\chi (B, A_k, A_{k+1}, E) \label{yedge} 
\end{align}
where $B=\meet{\join{A_{k-2}}{A_{k-1}}}{\join{A_k}{A_{k+1}}}$ and $E=\meet{\join{A_k}{A_{k+1}}}{\join{A_{k+2}}{A_{k+3}}}$.

\begin{figure}
\psset{unit=.8cm}
\begin{pspicture}(12,6)
\rput(0,.5){
\psline[showpoints=true,linewidth=2pt](1,1)(3,1)(5,2)(5,3)(4,4)
\uput[d](1,1){$A_{k-2}$}
\uput[d](3,1){$A_{k-1}$}
\uput[dr](5,2){$A_k$}
\uput[r](5,3){$A_{k+1}$}
\uput[ur](4,4){$A_{k+2}$}
\psline(1,1)(5,2)(4,4)
}
\rput(7,0){
\psline[showpoints=true,linewidth=2pt](1,1)(3,1)(5,2)(5,3)(4,4)(2,5)
\uput[d](1,1){$A_{k-2}$}
\uput[d](3,1){$A_{k-1}$}
\uput[r](5,2){$A_k$}
\uput[r](5,3){$A_{k+1}$}
\uput[ur](4,4){$A_{k+2}$}
\uput[ur](2,5){$A_{k+3}$}
\psline[showpoints=true,dotsize=2pt 5](3,1)(5,1)(5,3.5)(4,4)
\uput[dr](5,1){$B$}
\uput[r](5,3.5){$E$}
}
\end{pspicture}
\psset{unit=1cm}
\caption{The lines and points used in the definitions of $y_{2k}(A)$ (left) and $y_{2k+1}(A)$ (right)} \label{figDefy}
\end{figure}
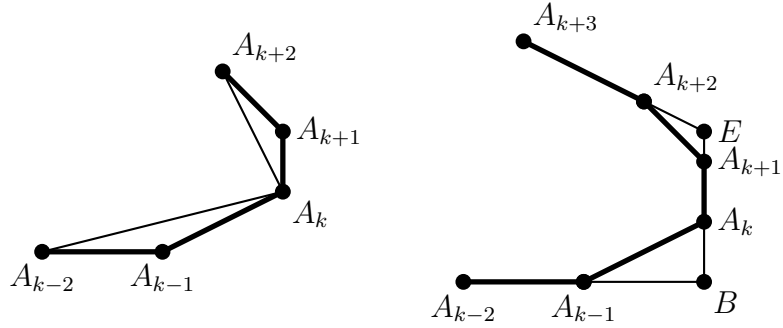

The $y$-parameters transform under the pentagram map according to the $Y$-pattern dynamics of a certain cluster algebra.  We used results of Fomin and Zelevinsky \cite{FZ} to give formulas for the iterates of the pentagram map in terms of the $F$-polynomials $F_{j,k}$ of this cluster algebra.  These can be defined recursively by $F_{j,-1}=F_{j,0} = 1$ and
\begin{equation}
\label{Fjkrec}
F_{j,k+1} = \frac{F_{j-3,k}F_{j+3,k} + M_{j,k}F_{j-1,k}F_{j+1,k}}{F_{j,k-1}}
\end{equation}
for $k \geq 0$ where 
\begin{displaymath}
M_{j,k} = \prod_{i=-k}^k y_{3i+j}
\end{displaymath}

\begin{thm}[{\cite[Theorem 4.2 and Theorem 1.2]{G}}]
Let $A \in \mathcal{P}_n$, $x_j = x_j(A)$, and $y_j = y_j(A)$.  Then 
\begin{align} 
\label{Tkx}
x_j(T^k(A)) &= \begin{cases}
             x_{j-3k}\left(\displaystyle\prod_{i=-k}^{k-1} y_{j+1+3i}\right)\dfrac{F_{j+2,k-1}F_{j-3,k}}{F_{j-2,k-1}F_{j+1,k}}, & j+k \textrm{ even} \\
             x_{j+3k}\left(\displaystyle\prod_{i=-k}^{k-1} y_{j+1+3i}\right)\dfrac{F_{j-3,k-1}F_{j+2,k}}{F_{j+1,k-1}F_{j-2,k}}, & j+k \textrm{ odd} \\
          \end{cases} \\
\label{Tky}
y_j(T^k(A)) &= \begin{cases}
                 M_{j,k}\dfrac{F_{j-1,k}F_{j+1,k}}{F_{j-3,k}F_{j+3,k}}, & j+k \textrm{ even} \\
                 (M_{j,k-1})^{-1} \dfrac{F_{j-3,k-1}F_{j+3,k-1}}{F_{j-1,k-1}F_{j+1,k-1}}, & j+k \textrm{ odd} \\
              \end{cases}
\end{align} 
\end{thm}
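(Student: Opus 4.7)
The plan is to exploit the observation, noted just before \eqref{Fjkrec}, that the $y$-parameters transform under $T$ according to the $Y$-pattern dynamics of a cluster algebra; one can then invoke the separation formulas of Fomin--Zelevinsky \cite{FZ} for iterated $Y$-mutations to obtain closed-form expressions.

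First, I would identify a quiver $Q$ on vertices indexed by $\mathbb{Z}/2n\mathbb{Z}$ such that simultaneous mutation at all even (respectively, odd) vertices realizes the pentagram transformation $\alpha_2$ (respectively, $\alpha_1$) in the $y$-coordinates.  The shape of the formulas \eqref{alpha1x}--\eqref{alpha2x}, with their dependence on indices $j \pm 1, j \pm 2, j \pm 3$, suggests that $Q$ should have arrows $j \to j \pm 1$ and $j \to j \pm 3$ with signs chosen so that the even vertices are pairwise non-adjacent (and likewise for the odd vertices), allowing the simultaneous mutation to be a well-defined single step of cluster dynamics.  One then directly checks that the resulting $y$-transformation law reproduces the expressions derived from \eqref{alpha1x}--\eqref{alpha2x} via $y_j = -x_j x_{j+1}$ or $-(x_j x_{j+1})^{-1}$.

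Second, I would verify that the polynomials $F_{j,k}$ defined by \eqref{Fjkrec} coincide with the cluster $F$-polynomials of $Q$ tracked along $k$ iterations of this mutation sequence.  The base case $F_{j,-1} = F_{j,0} = 1$ matches the principal-coefficients initial condition of \cite{FZ}, and \eqref{Fjkrec} is precisely the $F$-polynomial exchange relation, with the monomials $M_{j,k}$ playing the role of the coefficient monomials predicted by $Y$-pattern dynamics.  Substituting these identifications into the FZ $Y$-variable separation formula yields \eqref{Tky} directly, with the two parity cases corresponding to whether the most recent mutation was at even or odd vertices.

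Third, for the $x$-coordinate formula \eqref{Tkx} I would proceed by induction on $k$.  The base case $k=0$ is immediate.  For the inductive step, apply one more $\alpha_1$ or $\alpha_2$ to the expression supplied by the inductive hypothesis using \eqref{alpha1x}--\eqref{alpha2x}, and substitute $y$'s for products $x_i x_{i+1}$ wherever possible.  The resulting ratio produces numerators of the shape $F_{j-3,k-1}F_{j+3,k-1} + M_{j,k-1}F_{j-1,k-1}F_{j+1,k-1}$, which collapses via \eqref{Fjkrec} to $F_{j,k}F_{j,k-2}$; the surviving factors rearrange into the stated form.  The shift $j \mapsto j \mp 3k$ in \eqref{Tkx} emerges naturally because each application of $T$ advances the ``base vertex'' by three units in the cyclic index.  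The main obstacle is technical bookkeeping rather than conceptual: correctly specifying $Q$ and matching sign and parity conventions between $Y$-dynamics and \eqref{alpha1x}--\eqref{alpha2x}, and in the induction for \eqref{Tkx}, tracking the parity of $j+k$ together with the cyclic shifts so that each application of \eqref{Fjkrec} lines up with the appropriate cluster of indices.
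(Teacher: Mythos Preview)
This theorem is not proved in the present paper; it is quoted from \cite{G} (the attribution ``Theorem 4.2 and Theorem 1.2'' appears in the statement itself), and the paper only says that ``We used results of Fomin and Zelevinsky \cite{FZ} to give formulas for the iterates of the pentagram map in terms of the $F$-polynomials $F_{j,k}$ of this cluster algebra.'' There is therefore no proof here to compare against.

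That said, your outline is consistent with what the paper reports about the argument in \cite{G}: the $y$-parameters are identified with $Y$-pattern variables, Fomin--Zelevinsky's separation formulas are invoked to get \eqref{Tky}, and \eqref{Tkx} is then obtained from the $x$-coordinate recursions. Your sketch of the quiver shape and of the inductive collapse via \eqref{Fjkrec} is plausible, though you correctly flag that the real work lies in the bookkeeping (signs, parity, and index shifts). If you intend to write out the proof, the original reference \cite{G} is the place to check those conventions; nothing in the present paper supplies them.
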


The $F$-polynomials are polynomials in the $y_j$ (hence Laurent polynomials in the $x_j$) with positive coefficients.  They have a simple combinatorial description as generating functions of order ideals of certain posets $P_k$, which were first studied by Elkies, Kuperberg, Larsen, and Propp \cite{EKLP}.  Specifically, let $Q_k$ to be the set of triples $(r,s,t)\in \mathbb{Z}^3$ such that $|r| + |s| \leq k-2$, $r+s \equiv k \pmod{2}$, and
\begin{displaymath}
t \in [2|s|-k+2,k-2-2|r|]_4
\end{displaymath}
Note that $Q_k$ and $Q_{k+1}$ are disjoint.  Let $P_k = Q_{k+1} \cup Q_k$.  Define a partial order on $P_k$ by saying that $(r',s',t')$ covers $(r,s,t)$  if and only if $t' = t + 1$ and $|r'-r| + |s'-s| = 1$.  The partial order on $P_k$ restricts to a partial order on $Q_k$.  Let $J(P_k)$ denote the set of order ideals of $P_k$.  The Hasse diagrams for $P_2$ and $P_3$ are given in Figure \ref{figPk}.

\begin{figure}
\begin{pspicture}(11,6)
\rput(1,4){\rnode{v1}{(0,-1,1)}}
\rput(3,4){\rnode{v2}{(0,1,1)}}
\rput(2,3){\rnode{v3}{(0,0,0)}}
\rput(1,2){\rnode{v4}{(-1,0,-1)}}
\rput(3,2){\rnode{v5}{(1,0,-1)}}
\ncline{v4}{v3}
\ncline{v5}{v3}
\ncline{v3}{v1}
\ncline{v3}{v2}

\rput(4,0){
\rput(2,5){\rnode{w1}{(0,-2,2)}}
\rput(4,5){\rnode{w2}{(0,0,2)}}
\rput(6,5){\rnode{w3}{(0,2,2)}}
\rput(3,4){\rnode{w4}{(0,-1,1)}}
\rput(5,4){\rnode{w5}{(0,1,1)}}
\rput(1,3){\rnode{w6}{(-1,-1,0)}}
\rput(3,3){\rnode{w7}{(-1,1,0)}}
\rput(5,3){\rnode{w8}{(1,-1,0)}}
\rput(7,3){\rnode{w9}{(1,1,0)}}
\rput(3,2){\rnode{w10}{(-1,0,-1)}}
\rput(5,2){\rnode{w11}{(1,0,-1)}}
\rput(2,1){\rnode{w12}{(-2,0,-2)}}
\rput(4,1){\rnode{w13}{(0,0,-2)}}
\rput(6,1){\rnode{w14}{(2,0,-2)}}

\ncline{w12}{w10}
\ncline{w13}{w10}
\ncline{w13}{w11}
\ncline{w14}{w11}
\ncline{w10}{w6}
\ncline{w10}{w7}
\ncline{w11}{w8}
\ncline{w11}{w9}
\ncline{w6}{w4}
\ncline{w7}{w5}
\ncline{w8}{w4}
\ncline{w9}{w5}
\ncline{w4}{w1}
\ncline{w4}{w2}
\ncline{w5}{w2}
\ncline{w5}{w3}
}
\end{pspicture}
\caption{The poset $P_k$ for $k=2$ (left) and $k=3$ (right)}
\label{figPk}
\end{figure}
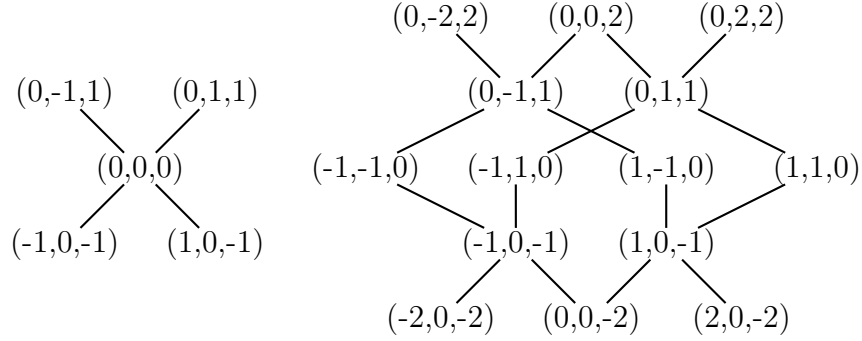

\begin{thm}[{\cite[Theorem 6.6]{G}}]
\begin{equation}
\label{Fjk}
F_{j,k} = \sum_{I \in J(P_k)} \prod_{(r,s,t)\in I} y_{3r+s+j}
\end{equation}
\end{thm}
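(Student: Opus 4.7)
The plan is to proceed by strong induction on $k \geq -1$, comparing the right-hand side $G_{j,k} := \sum_{I \in J(P_k)} \prod_{(r,s,t)\in I} y_{3r+s+j}$ of \eqref{Fjk} with $F_{j,k}$. Since the $F_{j,k}$ are uniquely determined by the recursion \eqref{Fjkrec} together with the initial conditions $F_{j,-1} = F_{j,0} = 1$, it suffices to verify the same initial conditions and recursion for the $G_{j,k}$. The base cases are immediate: for $k \in \{-1, 0\}$, the defining inequality $|r|+|s| \leq k'-2$ for $Q_{k'}$ admits no integer solutions when $k' \leq 1$, so $P_{-1} = P_0 = \emptyset$ and $G_{j,-1} = G_{j,0} = 1$.

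For the inductive step, assuming $G_{j',k'} = F_{j',k'}$ for all $j'$ and all $k' \leq k$, the goal reduces to proving the identity
\begin{equation*}
G_{j,k-1}\, G_{j,k+1} \;=\; G_{j-3,k}\, G_{j+3,k} \,+\, M_{j,k}\, G_{j-1,k}\, G_{j+1,k}.
\end{equation*}
Expanding both sides term-by-term, this amounts to a weight-preserving bijection
\begin{equation*}
J(P_{k-1}) \times J(P_{k+1}) \;\longleftrightarrow\; \mathcal{A} \,\sqcup\, \mathcal{B},
\end{equation*}
where $\mathcal{A}$ and $\mathcal{B}$ are each copies of $J(P_k) \times J(P_k)$ but with the monomial weight on $\mathcal{A}$ computed using $j$-shifts $-3, +3$ on the two factors, and on $\mathcal{B}$ using $j$-shifts $-1, +1$ together with an overall factor of $M_{j,k} = \prod_{i=-k}^k y_{3i+j}$.

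The key ingredient is the bijection itself. Because $P_k$ is layered in the $t$-direction, with covering relations that step $t$ up by $1$ and move $(r,s)$ by a unit, an order ideal of $P_k$ is naturally a monotone height function, and the poset $P_k$ is precisely (up to coordinates) of the kind that parametrizes perfect matchings of the Aztec diamond of order $k$ studied in Elkies--Kuperberg--Larsen--Propp \cite{EKLP}. The plan is to interlock a pair $(I,J) \in J(P_{k-1}) \times J(P_{k+1})$ into a single height configuration on an Aztec diamond of order $k$, and then split the resulting sum according to a local choice at the central vertex. The two local alternatives correspond to the types $\mathcal{A}$ and $\mathcal{B}$, and the identity is an instance of Kuo condensation (equivalently, the octahedron recurrence) specialized to this family.

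The main obstacle is the weight bookkeeping under this bijection. One must check that every element $(r,s,t)$ on the left, contributing $y_{3r+s+j}$, is matched with elements contributing $y_{3(r\pm 1) + s + j}$ or $y_{3r + (s\pm 1) + j}$ on the right in such a way that monomial exponents agree after the $j$-shifts. Moreover, the factor $M_{j,k}$ appearing in type $\mathcal{B}$ must be accounted for by a fixed ``axial'' family of elements with $s = 0$ and $r \in [-k,k]$, for which $3r+s+j$ runs precisely over the exponents $\{3i+j : -k \leq i \leq k\}$, and which is always present in (or always absent from) the type-$\mathcal{B}$ configurations. Once the geometric bijection has been set up correctly, these verifications reduce to a routine local case analysis near the central vertex of the Aztec region.
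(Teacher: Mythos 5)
The paper cites this theorem from its predecessor \cite{G} (Theorem 6.6 there) without reproducing the proof, so there is no in-paper argument to compare against directly. Your overall strategy --- induction on $k$ via the recursion \eqref{Fjkrec}, after checking the base cases $P_{-1}=P_0=\emptyset$ --- is a reasonable one and plausibly mirrors what \cite{G} does. The base-case computation and the reduction to the quadratic identity
\begin{equation*}
G_{j,k-1}\,G_{j,k+1} \;=\; G_{j-3,k}\,G_{j+3,k} \,+\, M_{j,k}\,G_{j-1,k}\,G_{j+1,k}
\end{equation*}
are both correct.

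The inductive step, however, is where the entire mathematical content of the theorem lies, and it is not carried out. You identify the identity as ``an instance of Kuo condensation\ldots specialized to this family'' and say the weight bookkeeping ``reduce[s] to a routine local case analysis,'' but the bijection $J(P_{k-1})\times J(P_{k+1}) \longleftrightarrow J(P_k)^2\sqcup J(P_k)^2$ is never constructed: ``interlocking a pair into a single height configuration\ldots and splitting according to a local choice at the central vertex'' is a description of the shape of a hoped-for argument, not an argument. One needs to define the superposition map, show it lands in the right set, exhibit the inverse, and verify weight preservation with the specific weights $y_{3r+s+j}$ including the $j$-shifts by $\pm1,\pm3$; none of this is done. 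Moreover, the proposed accounting for $M_{j,k}=\prod_{i=-k}^{k}y_{3i+j}$ via ``a fixed axial family of elements with $s=0$ and $r\in[-k,k]$'' cannot be literally correct: $P_k=Q_k\cup Q_{k+1}$, and the constraints $|r|+|s|\le k-2$ (for $Q_k$) and $|r|+|s|\le k-1$ (for $Q_{k+1}$) force $|r|\le k-1$ for every element of $P_k$. So the boundary factors $y_{j\pm3k}$ in $M_{j,k}$ do not correspond to any element of $P_k$ and must emerge from the structure of the bijection itself, which your sketch does not address. In short, the roadmap is sensible, but what is written is a plan rather than a proof: the key combinatorial lemma is asserted, not established.
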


\section{The $F$-polynomials}
According to \eqref{alpha1x} and \eqref{alpha2x}, the pentagram map has singularities for polygons with $x_jx_{j+1} = 1$, i.e., $y_j = -1$, for some $j$.  According to \eqref{Tkx}, the iterate $T^k$ has a singularity whenever $F_{j,k-1}=0$ or $F_{j,k}=0$ for some $j$.  In this section we examine under which circumstances having $y_j=-1$ for certain $j$ forces an $F$-polynomial to vanish.  Results along these lines will indicate how many steps a given singularity persists.

For the purpose of this section, relax the assumptions $y_{i+2n} = y_i$ for all $i$ and $y_1y_2\cdots y_{2n}=1$.  Instead consider the $F_{j,k}$ as polynomials in the countable collection of variables $\{y_i: i \in \mathbb{Z}\}$.  By way of notation, if $S \subseteq \mathbb{Z}$ let $F_{j,k}|_S$ be the polynomial in $\{y_i: i \in \mathbb{Z} \setminus S\}$ obtained by substituting $y_i=-1$ for all $i \in S$ into $F_{j,k}$.

An \emph{alternating sign matrix} is a square matrix of 1's, 0's, and -1's such that
\begin{itemize}
\item the non-zero entries of each row and column alternate in sign and
\item the sum of the entries of each row and column is 1.
\end{itemize}
Let $ASM(k)$ denote the set of $k$ by $k$ alternating sign matrices.

Elkies, Kuperberg, Larsen, and Propp \cite{EKLP} establish connections between alternating sign matrices and the posets $P_k$ and $Q_k$.  Specifically, they give a bijection from $ASM(k)$ to $J(Q_k)$.  Alternating sign matrices $A \in ASM(k)$ and $B \in ASM(k+1)$ are called compatible if the corresponding order ideals $I \in J(Q_k)$, $J \in J(Q_{k+1})$ have the property that $I \cup J$ is an order ideal of $P_k = Q_k \cup Q_{k+1}$.  Compatible pairs of alternating sign matrices are in bijection with $J(P_k)$.

We will not explicitly state the bijection between $ASM(k)$ and $J(Q_k)$.  Instead, we will list the needed properties of the bijection in the following two lemmas.  Each of these statements can easily be deduced from results of \cite{EKLP}.

\begin{lem}
\label{lemBij1}
Let $A \in ASM(k)$.  Suppose that $A$ has $m$ entries equal to 1, namely entries $(i_1,j_1),(i_2,j_2),\ldots,(i_m,j_m)$.  Then there are $2^m$ alternating sign matrices $B \in ASM(k+1)$ compatible with $A$.  Moreover, there exist an order ideal $J_0 \in J(Q_{k+1})$ and elements $(r_1,s_1,t_1),\ldots,(r_m,s_m,t_m) \in Q_{k+1} \setminus J_0$ such that
\begin{itemize}
\item The map $(i,j) \mapsto (i+j-k-1,-i+j)$ sends $(i_1,j_1),\ldots, (i_m,j_m)$ to $(r_1,s_1),\ldots, (r_m,s_m)$.
\item An order ideal $J \in J(Q_{k+1})$ corresponds to a matrix $B$ compatible with $A$ if and only if
\begin{displaymath}
J_0 \subseteq J \subseteq J_0 \cup \{(r_1,s_1,t_1),\ldots,(r_m,s_m,t_m)\}
\end{displaymath}
\end{itemize}
\end{lem}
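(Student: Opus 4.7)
The plan is to deduce the lemma from the EKLP correspondence \cite{EKLP} between alternating sign matrices and order ideals of $Q_k$, combined with their description of the ``compatibility'' between consecutive sizes. Under the EKLP bijection $\Phi_k : ASM(k) \to J(Q_k)$ (realized via height functions on the Aztec diamond), a pair $(A,B) \in ASM(k) \times ASM(k+1)$ is compatible exactly when $\Phi_k(A) \cup \Phi_{k+1}(B) \in J(P_k)$. In the height-function picture, each $1$-entry of $A$ corresponds to a local site where the glued height function admits two distinct coherent extensions to size $k+1$, and these $m$ binary choices may be made independently; this immediately yields both the count $2^m$ and the mutual independence of the toggles.

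Next I would pin down the coordinatization. Applying the standard $45^\circ$ rotation and recentering sends a matrix position $(i,j)$ to $(r,s) = (i+j-k-1, -i+j)$, and I would check that $r+s = 2j-k-1 \equiv k+1 \pmod{2}$ and that the ASM constraint on the positions of $1$-entries forces $|r|+|s| \leq k-1$, so $(r_\ell,s_\ell)$ lands in the $(r,s)$-projection of $Q_{k+1}$. I would then let $J_0 \in J(Q_{k+1})$ be the order ideal associated to $B_{\min}$, the unique compatible $B$ in which every local choice is resolved ``downward'', and define $(r_\ell, s_\ell, t_\ell)$ to be the unique element of $Q_{k+1} \setminus J_0$ sitting directly above the $\ell$-th free site in the height function.

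The remaining task is to verify that $\{(r_\ell, s_\ell, t_\ell)\}_{\ell=1}^m$ is an antichain of minimal elements of $Q_{k+1} \setminus J_0$; equivalently, that adjoining any subset of these elements to $J_0$ still yields an order ideal of $Q_{k+1}$ whose union with $\Phi_k(A)$ remains an ideal of $P_k$. This reduces to the observation that the free sites associated to distinct $1$-entries of $A$ occupy disjoint patches of the height function, so none of them can be forced by another via cover relations in $P_k$. I expect the main obstacle to be reconciling conventions between the $(r,s,t)$-coordinates used here and the standard axes of the Aztec diamond in \cite{EKLP}; once the indexing is matched, the structural content is a direct translation of EKLP's description of compatibility.
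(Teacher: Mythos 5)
The paper does not actually supply a proof of this lemma: it states explicitly that the bijection between $ASM(k)$ and $J(Q_k)$ comes from \cite{EKLP} and that "each of these statements can easily be deduced from results of [EKLP]." So there is no internal argument to compare against. Your plan---pull in EKLP's height-function (Aztec diamond) description of the ASM/order-ideal correspondence, identify compatibility with the condition $\Phi_k(A)\cup\Phi_{k+1}(B)\in J(P_k)$, and recognize the $2^m$ count as EKLP's description of how many size-$(k+1)$ ASMs are compatible with a fixed size-$k$ ASM, indexed by independent binary toggles at the $1$-entries---is precisely the deduction the paper intends, and the high-level structure is sound.

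Two remarks on the details you defer. First, you say the ASM constraint forces $|r|+|s|\le k-1$, but this bound actually holds for every position $(i,j)\in\{1,\dots,k\}^2$ regardless of the ASM structure: with $r=i+j-k-1$ and $s=j-i$, one has $r+s=2j-k-1$ and $r-s=2i-k-1$, both in $[-(k-1),k-1]$, and $|r|+|s|=\max(|r+s|,|r-s|)$; so that step needs no ASM input. Second, the genuinely delicate part of your plan is (a) pinning down the $t$-coordinate in your dictionary between the paper's $(r,s,t)$-poset $Q_{k+1}$ and EKLP's Aztec diamond conventions, and (b) showing that the $m$ candidate triples all lie in $Q_{k+1}\setminus J_0$, are minimal there, and are mutually incomparable in $P_k$, so that $J_0\cup S$ remains an ideal of $Q_{k+1}$ and $\Phi_k(A)\cup J_0\cup S$ an ideal of $P_k$ for every subset $S$. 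You flag these correctly as the convention-matching and antichain verifications; they are exactly the "routine but must be done" pieces the paper elides by pointing to \cite{EKLP}.
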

By way of notation, let $B_0=B_0(A) \in ASM(k+1)$ be the matrix corresponding to the order ideal $J_0$ from the lemma.

\begin{lem}
\label{lemBij2}
Let $A \in ASM(k)$ and suppose that $a_{11} = a_{kk} = 1$.  This implies $a_{1k} = a_{k1} = 0$.  Let $A' \in ASM(k)$ be identical to $A$ except on the corners where $a'_{11} = a'_{kk} = 0$ and $a'_{1k} = a'_{k1} = 1$.  Let $B = B_0(A)$ and $B' = B_0(A')$.  
\begin{itemize}
\item Let $I, I' \in J(Q_k)$ be the order ideals corresponding to $A$ and $A'$ respectively.  Then $I \subseteq I'$, and $I' \setminus I$ contains exactly one element $(r,s,t)$ for each $r,s$ with $|r|+|s| \leq k-2$ and $r+s \equiv k \pmod{2}$.
\item Let $J, J' \in J(Q_{k+1})$ be the order ideals corresponding to $B$ and $B'$ respectively.  Then $J \subseteq J'$, and $J' \setminus J$ contains exactly one element $(r,s,t)$ for each $r,s$ with $|r|+|s| \leq k-1$, $r+s \equiv k+1 \pmod{2}$, and $s \neq \pm (k-1)$.
\end{itemize}
\end{lem}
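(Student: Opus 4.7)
The plan is to prove both parts in parallel using the EKLP bijection $ASM(k) \to J(Q_k)$. Concretely, this bijection can be described via the corner-sum matrix $C_A(i,j) = \sum_{i' \le i,\, j' \le j} a_{i'j'}$: the order ideal $I(A) \in J(Q_k)$ is recovered from the level sets of $C_A$ through a height-function dictionary. Under this description, changes to $A$ at its four corners translate to a uniform shift of $C_A$ on an interior block, and hence to a uniform addition of elements to the order ideal at the lattice points corresponding to that block.

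For part 1, I would compute $C_{A'} - C_A$ directly from the four modified entries: $a_{11}, a_{kk}$ drop from $1$ to $0$ while $a_{1k}, a_{k1}$ rise from $0$ to $1$. A short calculation gives $C_{A'}(i,j) - C_A(i,j) = -1$ exactly on the interior block $1 \le i, j \le k-1$ and $0$ elsewhere. Translating this uniform drop to the order-ideal picture (each unit drop in $C$ at a lattice point corresponds to adding one new level at that point, up to the relevant sign convention), we obtain $I \subseteq I'$ together with the description that $I' \setminus I$ contains exactly one new element $(r,s,t)$ for each lattice point $(r,s)$ in the interior diamond $|r|+|s| \le k-2$, $r+s \equiv k \pmod 2$.

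For part 2, I would combine the first part with Lemma \ref{lemBij1}. The minimum compatible ASM $B_0(A) \in ASM(k+1)$ is determined canonically by $A$, and the flip $A \to A'$ induces a corresponding flip $B_0(A) \to B_0(A')$. Running the same corner-sum calculation one dimension higher produces a diamond of added elements in $J' \setminus J$ at lattice points with $|r|+|s| \le k-1$, $r+s \equiv k+1 \pmod 2$. The exclusion $s = \pm(k-1)$ arises from the boundary of $Q_{k+1}$: at these extremal slopes the corresponding entries of $B_0$ lie in outermost rows/columns that are forced by $A$'s boundary (through the minimum-compatibility condition of Lemma \ref{lemBij1}) and are unaffected by modifications at the four corners of $A$.

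The main obstacle is verifying the boundary exclusion $s = \pm(k-1)$ in part 2, which requires a close analysis of the $B_0$-construction from Lemma \ref{lemBij1} on the extremal rows and columns. I would handle this by explicit inspection for small $k$ (starting with $k=2$, where $Q_2$ is a single point, and $k=3$, where the exclusion removes exactly the boundary points on the top/bottom rows of $Q_4$), then extending to general $k$ by either induction or a uniform description of the frozen positions of $B_0$ in terms of the 1-entries of $A$.
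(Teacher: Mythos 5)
The paper itself gives no proof of this lemma---it just remarks that ``each of these statements can easily be deduced from results of \cite{EKLP}''---so there is no internal argument to compare against. Your corner-sum approach is a reasonable way to instantiate the EKLP bijection, and the key calculation in part 1 is correct: with $C_A(i,j) = \sum_{i'\le i, j'\le j} a_{i'j'}$, the four corner flips give $C_{A'} - C_A = -1$ on the interior block $1 \le i,j \le k-1$ and $0$ on the boundary, and under the (sign-checked) height-function dictionary this adds exactly one element over each $(r,s)$ in the diamond $|r|+|s| \le k-2$, $r+s \equiv k \pmod 2$. So part 1 is essentially right, modulo pinning down the convention relating corner-sums to heights.

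The gap is in part 2, and it has two components that you should not gloss over. First, the phrase ``run the same corner-sum calculation one dimension higher'' is doing a lot of hidden work: $B_0(A)$ is a $(k+1)\times(k+1)$ ASM determined indirectly by the minimal-compatibility condition, not by a four-corner flip. To make this rigorous you need the corner-sum characterization of compatibility from EKLP (that $C_B$ is sandwiched between shifts of $C_A$), so that $C_{B_0(A)}$ is given by an explicit extremal formula in terms of $C_A$; only then can you compute $C_{B_0(A')} - C_{B_0(A)}$. Second, for the boundary exclusion $s = \pm(k-1)$ there is a clean structural reason you should use instead of case-checking small $k$: the only elements of $Q_{k+1}$ with $s = \pm(k-1)$ are $(0,\pm(k-1),k-1)$, and since every element of $Q_k$ has $t \le k-2$, these two elements lie above all of $Q_k$ in $P_k$ and are therefore maximal in $P_k$. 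Consequently they never lie in the down-set $J_0(I)$ for any $I \subseteq Q_k$, so they appear in neither $J = J_0(I)$ nor $J' = J_0(I')$ and are automatically absent from $J' \setminus J$. Filling in these two points would turn your sketch into a complete proof.
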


Define the weight of an alternating sign matrix $A$ with respect to an integer $j$ to be
\begin{displaymath}
\wt(A,j) = \prod_{(r,s,t)\in I} y_{3r+s+j}
\end{displaymath}
where $I \in J(Q_k)$ is the order ideal corresponding to $A$.  By \eqref{Fjk} we have
\begin{displaymath}
F_{j,k} = \sum_{A,B} \wt(A,j)\wt(B,j)
\end{displaymath}
where the sum is over all compatible pairs $A \in ASM(k)$, $B \in ASM(k+1)$.  Recall that $r+s \equiv k \pmod{2}$ for $(r,s,t) \in Q_k$.  Therefore $\wt(A,j)$ is a monomial in $\{y_i : i \equiv j+k \pmod{2}\}$ while $\wt(B,j)$ is a monomial in $\{y_i : i \equiv j+k+1 \pmod{2}\}$.

\begin{prop}
\begin{equation}
\label{FASM}
F_{j,k} = \sum_{A \in ASM(k)} \wt(A,j)\wt(B_0(A),j)\prod_{a_{il}=1}(1+y_{j+2i+4l-3k-3})
\end{equation}
\end{prop}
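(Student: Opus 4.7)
The plan is to start from the alternating sign matrix expansion
$$F_{j,k} = \sum_{(A,B)} \wt(A,j)\wt(B,j),$$
where the sum ranges over compatible pairs $A \in ASM(k)$, $B \in ASM(k+1)$, and then perform the inner sum over $B$ in closed form by appealing to Lemma \ref{lemBij1}.

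First I would group the sum by its first coordinate,
$$F_{j,k} = \sum_{A \in ASM(k)} \wt(A,j)\Bigl(\sum_{B\text{ compatible with }A} \wt(B,j)\Bigr),$$
and then evaluate the inner sum. Fix $A \in ASM(k)$ with $1$'s at positions $(i_1,j_1),\ldots,(i_m,j_m)$. By Lemma \ref{lemBij1}, the order ideals $J \in J(Q_{k+1})$ corresponding to $B$'s compatible with $A$ are precisely the sets of the form $J = J_0 \cup S$, where $S$ ranges over all subsets of $\{(r_1,s_1,t_1),\ldots,(r_m,s_m,t_m)\}$. Since $\wt(\,\cdot\,,j)$ is multiplicative in the order ideal, this decomposition immediately gives
$$\sum_{B\text{ compatible with }A} \wt(B,j) \;=\; \wt(B_0(A),j)\prod_{\ell=1}^m \bigl(1 + y_{3r_\ell + s_\ell + j}\bigr).$$

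Next I would rewrite each $y$-subscript in terms of the row and column of the corresponding $1$ in $A$. By the first bullet of Lemma \ref{lemBij1}, $(r_\ell,s_\ell) = (i_\ell + j_\ell - k - 1,\, -i_\ell + j_\ell)$, so
$$3r_\ell + s_\ell + j \;=\; 3(i_\ell + j_\ell - k - 1) + (-i_\ell + j_\ell) + j \;=\; 2i_\ell + 4j_\ell - 3k - 3 + j,$$
which is exactly the exponent appearing in the claimed formula \eqref{FASM}. Re-indexing the product by the positions $(i,l)$ of the $1$'s of $A$ (so $a_{il}=1$) converts the product over $\ell$ into $\prod_{a_{il}=1}(1+y_{j+2i+4l-3k-3})$, and combining with the outer sum over $A$ completes the proof.

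The argument is essentially a rearrangement, so there is no serious obstacle; the only thing that needs care is the small algebraic check that the bijection $(i,l)\mapsto (r,s)$ in Lemma \ref{lemBij1} produces the exponent $2i+4l-3k-3$, together with the observation that $\wt$ is multiplicative on disjoint unions of order ideal elements so that the sum over subsets $S$ collapses into a product of binomials.
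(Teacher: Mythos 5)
Your proposal is correct and follows essentially the same route as the paper: group the compatible-pair sum by the first matrix $A$, parameterize the compatible $B$'s via the $J_0 \cup S$ description in Lemma \ref{lemBij1}, factor the inner sum using multiplicativity of $\wt$ over order-ideal elements, and finally translate $(r_\ell,s_\ell)$ back to matrix coordinates $(i_\ell,l_\ell)$ to recover the exponent $j+2i+4l-3k-3$. Your write-up is, if anything, more explicit than the paper's about the subset-to-binomial step, but the underlying argument is identical.
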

\begin{proof}
We need to show for each fixed $A \in ASM(k)$ that
\begin{displaymath}
\sum_B wt(B,j) = \wt(B_0(A),j)\prod_{a_{il}=1}(1+y_{j+2i+4l-3k-3})
\end{displaymath}
where the sum is over $B$ compatible with $A$.  By Lemma \ref{lemBij1}, both sides have $2^m$ terms where $m$ is the number of 1's in $A$.  Moreover, the lowest degree term of both sides is $\wt(B_0(A),j)$.  Let $J_0$ be the order ideal corresponding to $B_0(A)$.  If $a_{il}=1$ then Lemma \ref{lemBij1} says that it is possible to add some $(r,s,t)$ with $r=i+l-k-1$ and $s=l-i$ to $J_0$ to get a new order ideal $J$ corresponding to a matrix $B$ compatible with $A$.  Computing:
\begin{align*}
\wt(B,j) &= \wt(B_0(A),j)y_{3r+s+j} \\
&= \wt(B_0(A),j)y_{j+2i+4l-3k-3}
\end{align*}
By Lemma \ref{lemBij1}, this can be done for arbitrary subsets of the set of 1's of $A$, so the proposition follows.
\end{proof}

Let $S$ be a set of integers.  Say that a matrix $A \in ASM(k)$ \emph{avoids} $(S,j)$ if $j+2i+4l-3k-3 \notin S$ for all $(i,l)$ with $a_{il}=1$.  As $i$ and $l$ range from $1$ to $k$, the index $j+2i+4l-3k-3$ takes on values in the $k$ by $k$ array 
\begin{equation}
\label{iArray}
\left(
\begin{array}{cccc}
j-3k+3 & j-3k+7 & \cdots & j+k-1 \\
j-3k+5 & j-3k+9 & \cdots & j+k+1 \\
\vdots & \vdots & \ddots & \vdots \\
j-k+1 & j-k+5 & \cdots & j+3k-3
\end{array}
\right)
\end{equation}
Hence, $A$ avoids $(S,j)$ if and only if the entries of $A$ equal to 1 avoid the entries of this array contained in $S$.

Let $S$ be a set of integers.  Then we can use \eqref{FASM} to compute $F_{j,k}|_S$ by substituting $y_i=-1$ for all $i \in S$.  If some $A \in ASM(k)$ does not avoid $(S,j)$ then the corresponding term of the sum will have a factor $1+y_i$ for some $i \in S$.  Hence this whole term will vanish in $F_{j,k}|_S$.  

\begin{cor}
\label{corFeq0}
Fix $j$ and $k$, and let $S \subseteq \{i: i \equiv j+k+1 \pmod{2}\}$.  Then
\begin{equation}
\label{FASMRed}
F_{j,k}|_S = \sum_{A} \wt(A,j)(\wt(B_0(A),j))|_S\prod_{a_{il}=1}(1+y_{j+2i+4l-3k-3})
\end{equation}
 where the sum is over those $A \in ASM(k)$ which avoid $(S,j)$.  In particular, if no such $A$ exists then $F_{j,k}|_S \equiv 0$.
\end{cor}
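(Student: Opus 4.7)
The plan is to start from the already-proved formula \eqref{FASM} of the previous proposition and simply substitute $y_i = -1$ for each $i \in S$. The whole argument reduces to three parity checks: one saying that $\wt(A,j)$ is untouched by the substitution, one saying that the factors $1 + y_{j+2i+4l-3k-3}$ in the product \emph{can} be hit by it, and one saying that any $A$ failing to avoid $(S,j)$ contributes a factor of $0$ and therefore drops out.

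First I would observe that, as noted in the paragraph immediately preceding that proposition, $\wt(A,j)$ is a monomial in the variables $\{y_i : i \equiv j+k \pmod{2}\}$. By hypothesis $S$ lies in the opposite parity class $\{i : i \equiv j+k+1 \pmod{2}\}$, so the substitution $y_i = -1$ for $i \in S$ leaves $\wt(A,j)$ entirely unchanged, giving $\wt(A,j)|_S = \wt(A,j)$. By contrast $\wt(B_0(A),j)$ lies in the same parity class as $S$ and so may genuinely be affected, which is why the right-hand side of \eqref{FASMRed} retains the notation $(\wt(B_0(A),j))|_S$. Next I would check the parity of the index inside the product: since $2i + 4l$ is even, $j + 2i + 4l - 3k - 3 \equiv j - 3k - 3 \equiv j + k + 1 \pmod{2}$, matching the parity of $S$ exactly, so each such index may or may not lie in $S$.

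With these parity observations in place, the cancellation step is immediate. Fix $A \in ASM(k)$. If $A$ does not avoid $(S,j)$, then by definition there is an entry $a_{il} = 1$ with $j + 2i + 4l - 3k - 3 \in S$; for that $(i,l)$ the factor $(1 + y_{j+2i+4l-3k-3})$ becomes $1 + (-1) = 0$ after substitution, killing the entire summand. If instead $A$ avoids $(S,j)$, then every index $j + 2i + 4l - 3k - 3$ arising from its $1$-entries lies outside $S$, so each factor in the product is unchanged by the substitution, and the term contributes exactly $\wt(A,j)\,(\wt(B_0(A),j))|_S \prod_{a_{il}=1}(1+y_{j+2i+4l-3k-3})$. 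Summing over these surviving $A$ yields \eqref{FASMRed}, and the final assertion follows because if no such $A$ exists the reduced sum is empty. I don't foresee a real obstacle here; the only care needed is the parity bookkeeping, which is what ensures the substitution can only trigger cancellations through the product factors and not through $\wt(A,j)$ itself.
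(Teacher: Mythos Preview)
Your proof is correct and follows essentially the same approach as the paper: substitute $y_i=-1$ for $i\in S$ into the formula \eqref{FASM}, use the parity observations (made just before that proposition) to see that $\wt(A,j)$ is untouched while the product factors can be hit, and conclude that any $A$ failing to avoid $(S,j)$ contributes a zero factor. The paper's argument is the brief paragraph immediately preceding the corollary; you have simply spelled out the parity bookkeeping more explicitly.
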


Unfortunately, if there do exist matrices $A \in ASM(k)$ avoiding $(S,j)$, then it is not safe to conclude that $F_{j,k}|_S \not\equiv 0$.  Indeed, recall that $\wt(B_0(A),j)$ is a monomial in $\{y_i : i \equiv j+k+1 \pmod{2}\}$.  When we substitute $y_i=-1$ for $i \in S$, a sign is introduced.  It is therefore possible that different terms of \eqref{FASMRed} cancel with each other.  We can at least conclude that $F_{j,k}|_S \not\equiv 0$ if there exists a unique $A$ avoiding $(S,j)$.

\begin{prop}
\label{Feq0}
Fix $l \in [-(k-1),k-1]_2$.  Let $S \subseteq \mathbb{Z}$ be either 
\begin{enumerate}
\item $[j+l-2(k-1),j+l+2(k-1)]_4$ or 
\item $[j+2l-(k-1),j+2l+(k-1)]_2$.
\end{enumerate}
Then $F_{j,k}|_S \equiv 0$.
\end{prop}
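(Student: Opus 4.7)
The plan is to invoke Corollary \ref{corFeq0}: it suffices to prove that in each case no alternating sign matrix $A \in ASM(k)$ avoids $(S,j)$. I will show something stronger, namely that $S$ coincides \emph{exactly} with the set of values appearing in one row (case 1) or one column (case 2) of the $k \times k$ array \eqref{iArray}; the conclusion is then immediate from the defining axioms of an alternating sign matrix.

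First, observe that the array \eqref{iArray} has the following structure: along a row (fixed $i$), the entry $j+2i+4l-3k-3$ increases by $4$ as $l$ increases by $1$; along a column (fixed $l$), the entry increases by $2$ as $i$ increases by $1$. This matches the common differences of $S$ in cases 1 and 2 respectively, and in both cases $|S|=k$, the length of a row or column. The parity hypothesis $l \in [-(k-1),k-1]_2$ forces $l+k+1$ to be even, so that the index $i^\ast = (l+k+1)/2$ (used below for case 1), or equally $l^\ast = (l+k+1)/2$ (used for case 2), is an integer in $\{1,\ldots,k\}$.

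For case 1, I would verify by direct substitution that row $i^\ast$ of the array consists of the entries $\{j+2i^\ast-3k+1,\, j+2i^\ast-3k+5,\, \ldots,\, j+2i^\ast+k-3\}$, which, upon plugging in $2i^\ast = l+k+1$, simplifies to $\{j+l-2(k-1),\, j+l-2(k-1)+4,\, \ldots,\, j+l+2(k-1)\} = S$. For case 2, the analogous computation with $l^\ast = (l+k+1)/2$ shows that column $l^\ast$ of the array equals $[j+2l-(k-1),\, j+2l+(k-1)]_2 = S$.

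Now, any $A \in ASM(k)$ has row sums and column sums equal to $1$, so each row and each column contains at least one entry equal to $1$. In case 1, avoiding $(S,j)$ would require $A$ to have no $1$ in row $i^\ast$, and in case 2, no $1$ in column $l^\ast$, both of which are impossible. Hence no $A \in ASM(k)$ avoids $(S,j)$, and Corollary \ref{corFeq0} gives $F_{j,k}|_S \equiv 0$. The only delicate step is the parity/index bookkeeping that identifies $S$ with a row or column of \eqref{iArray}; once this identification is in place, the alternating sign matrix axioms finish the argument in one line.
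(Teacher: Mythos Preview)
Your proof is correct and follows essentially the same approach as the paper: identify $S$ with a full row (case 1) or column (case 2) of the array \eqref{iArray}, note that every alternating sign matrix must have a $1$ in that row or column, and apply Corollary \ref{corFeq0}. Your version simply makes explicit the index bookkeeping (the choice $i^\ast = l^\ast = (l+k+1)/2$) that the paper leaves implicit.
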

\begin{proof}
Suppose $S = [j+l-2(k-1),j+l+2(k-1)]_4$.  Then $S$ consists precisely of the elements of some row of \eqref{iArray}.  Every $A \in ASM(k)$ must have at least one entry equal to 1 in this row by the definition of alternating sign matrices.  So by Corollary~ \ref{corFeq0}, $F_{j,k}|_S \equiv 0$.  Similarly $S = [j+2l-(k-1),j+2l+(k-1)]_2$ corresponds to a column of \eqref{iArray}, so the same result holds.
\end{proof}

\begin{lem}
Fix $l \in [-(k+1),k+1]_2$ and let
\begin{displaymath}
\sigma = \begin{cases}
           0,  & k \textrm{ odd} \\
           -2, & k \textrm{ even and } l+k \equiv 1 \pmod{4} \\
           2,  & k \textrm{ even and } l+k \equiv 3 \pmod{4} \\
          \end{cases}
\end{displaymath}
Then the set of entries of \eqref{iArray} contained in $\{j+l-2k,j+l+\sigma,j+l+2k\}$ corresponds to the 1's of a permutation matrix.
\label{lemKnight}
\end{lem}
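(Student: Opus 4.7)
The plan is to translate the stated condition into an arithmetic statement about which integer solutions of certain equations lie in the $k\times k$ grid, and then verify directly that the resulting set of positions has exactly one entry per column and at most one per row. First I would rewrite: the entry at position $(i,m)$ of \eqref{iArray} equals one of $j+l-2k,\, j+l+\sigma,\, j+l+2k$ if and only if $i+2m = N_c$ for some $c \in \{-2k,\sigma,2k\}$, where $N_c := (l+c+3k+3)/2$. The parity hypothesis on $l$ makes each $N_c$ an integer, and a direct check across the three cases of $\sigma$ shows that $N_{-2k}$ and $N_{2k}$ share a common parity while $N_\sigma$ has the opposite parity.

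For the column count, I would fix $m \in [1,k]$ and set $i_c := N_c - 2m$, so that $(i_c,m)$ lies in the array precisely when $2m \in I_c := [N_c-k,\, N_c-1]$. Writing $N_1 := N_{-2k} \in [1,k+2]$, the intervals $I_{-2k}, I_\sigma, I_{2k}$ are consecutive of length $k$ sitting inside $[N_1-k,\, N_1+2k-1]$: when $\sigma=0$ they tile this range perfectly, while when $\sigma=\pm 2$ they cover it with exactly one overlap point and exactly one gap point. The key observation is that in the latter cases both exceptional points are odd (they are of the form $N_1$, $N_1-1$, $N_1+k$, or $N_1+k-1$, and the parity of $N_1$ together with $k$ even makes all four odd). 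Since $2m$ is even and $[2,2k] \subseteq [N_1-k,\, N_1+2k-1]$, each $m \in [1,k]$ lies in exactly one $I_c$, giving exactly one position per column.

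For the row count, suppose $(i,m_1)$ and $(i,m_2)$ are two positions of the set with $i+2m_a = N_{c_a}$. Then $4(m_1-m_2) = c_1 - c_2$, and the nonzero differences range over $\{\pm(2k+\sigma),\, \pm(2k-\sigma),\, \pm 4k\}$. A short case check on $\sigma$ (together with the corresponding parity of $k$) shows that only $\pm 4k$ is divisible by $4$, and this forces $|m_1-m_2| = k$, impossible in $[1,k]$. Hence at most one position per row. Combined with the column count, the $k$ positions occupy $k$ distinct rows, which is exactly the condition that they are the $1$'s of a $k \times k$ permutation matrix.

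The main obstacle is the parity bookkeeping in the $\sigma=\pm 2$ subcases of the column count: one must confirm that the unavoidable overlap and gap in $I_{-2k}\cup I_\sigma\cup I_{2k}$ always occur at integers of parity opposite to $2m$. The cleanest organizing principle is that $N_\sigma - N_1 = k + \sigma/2$, which is even exactly in the $\sigma=0$ subcase and odd in the $\sigma=\pm 2$ subcases; this single fact explains simultaneously why the tiling is imperfect when $\sigma \ne 0$ and why the defects land on odd integers.
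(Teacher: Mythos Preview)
Your argument is correct and complete: reducing the membership condition to $i+2m\in\{N_{-2k},N_\sigma,N_{2k}\}$, then checking exactly one hit per column via the interval decomposition and at most one hit per row via the divisibility-by-$4$ obstruction, is a clean direct verification. The paper proceeds differently. It \emph{constructs} the permutation explicitly as a ``knight's path'' (one column right, two rows up, with cyclic wraparound in the rows), observes that the array value $j+2i+4m-3k-3$ is constant along each of the three segments between wraparounds and jumps by $2k$ at each wrap, and for $k$ even repairs the failure of bijectivity by shifting the middle segment one row (whence the $\sigma=\pm2$). Your approach is more mechanical and requires no geometric picture; the paper's approach explains \emph{why} the three target values are exactly $2k$ apart and makes the resulting permutation matrix visible, which is convenient since Proposition~\ref{propFneq0} immediately refers to ``the permutation matrix from the conclusion'' and to the sizes $a,b,c$ of the three segments.

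One small slip to flag: in your closing paragraph you assert that $N_\sigma-N_1=k+\sigma/2$ is ``even exactly in the $\sigma=0$ subcase.'' In fact $N_\sigma-N_1$ is odd in \emph{all} three subcases (for $\sigma=0$ one has $k$ odd, and for $\sigma=\pm2$ one has $k\pm1$ odd since $k$ is even); this is precisely your earlier observation that $N_\sigma$ and $N_{\pm 2k}$ have opposite parity. The relevant dichotomy is whether $N_\sigma-N_1$ equals $k$ (perfect tiling) or $k\pm1$ (one overlap and one gap), not its parity. This does not affect the proof itself, since you already verified the parity of the exceptional points case by case in the second paragraph.
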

\begin{proof}
Consider a ``knight's path'' in a $k$ by $k$ matrix which starts at some entry in the first column and moves one column right and two rows up each step.  Suppose the rows are ordered cyclically so whenever the knight passes the top of the matrix it wraps around to the bottom.  Continue until the knight reaches the last column placing 1's everywhere it visits.  One possible resulting matrix for $k=7$ is:
\begin{displaymath}
\left(
\begin{array}{ccccccc}
0 & 0 & 0 & 0 & 0 & 1 & 0 \\
0 & 1 & 0 & 0 & 0 & 0 & 0 \\
0 & 0 & 0 & 0 & 1 & 0 & 0 \\
1 & 0 & 0 & 0 & 0 & 0 & 0 \\
0 & 0 & 0 & 1 & 0 & 0 & 0 \\
0 & 0 & 0 & 0 & 0 & 0 & 1 \\
0 & 0 & 1 & 0 & 0 & 0 & 0 \\
\end{array}
\right)
\end{displaymath}

The wrap arounds divide the path into three segments (one of which might have length zero).  The entries of \eqref{iArray} are constant along each segment, and differ by $2k$ between consecutive segments.  If $k$ is odd then the result is a permutation matrix, and the 1's correspond to entries of the array equal to $j+l-2k$, $j+l$, or $j+l+2k$ for some $l$.  One can check that the set of values of $l$ that arise in this manner is precisely $[-(k+1),k+1]_2$.  

If $k$ is even then the resulting matrix will not be a permutation matrix, but one can be obtained by shifting the 1's of the middle segment either up or down by one row.  For instance, in the following example with $k=6$ the middle segment is shifted down:
\begin{displaymath}
\left(
\begin{array}{ccccccc}
1 & 0 & 0 & 1 & 0 & 0 \\
0 & 0 & 0 & 0 & 0 & 0 \\
0 & 0 & 1 & 0 & 0 & 1 \\
0 & 0 & 0 & 0 & 0 & 0 \\
0 & 1 & 0 & 0 & 1 & 0 \\
0 & 0 & 0 & 0 & 0 & 0 \\
\end{array}
\right)
\to
\left(
\begin{array}{ccccccc}
1 & 0 & 0 & 0 & 0 & 0 \\
0 & 0 & 0 & 1 & 0 & 0 \\
0 & 0 & 0 & 0 & 0 & 1 \\
0 & 0 & 1 & 0 & 0 & 0 \\
0 & 0 & 0 & 0 & 1 & 0 \\
0 & 1 & 0 & 0 & 0 & 0 \\
\end{array}
\right)
\end{displaymath}
The effect on the corresponding entry of the array is accounted for by adding $\sigma$.
\end{proof}

\begin{prop}
\label{propFneq0}
Fix $l \in [-(k+1),k+1]_2$ and let
\begin{displaymath}
S = [j-3k+3,j+3k-3]_2 \setminus \{j+l-2k,j+l+\sigma,j+l+2k\} 
\end{displaymath}
for $\sigma$ as in Lemma \ref{lemKnight}.  Let $A$ be the permutation matrix from the conclusion of Lemma \ref{lemKnight}.  Then
\begin{displaymath}
F_{j,k}|_S = \wt(A,j)(\wt(B_0(A),j))|_S(1+y_{j+l-2k})^a(1+y_{j+l+\sigma})^b(1+y_{j+l+2k})^c
\end{displaymath}
for some nonnegative integers $a,b,c$ with $a+b+c = k$.  In particular, $F_{j,k}|_S \not\equiv 0$.
\end{prop}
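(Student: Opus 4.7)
The plan is to apply Corollary \ref{corFeq0} directly and argue that the resulting sum has a single nonzero term, coming from the permutation matrix $A$ identified in Lemma \ref{lemKnight}.

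First, I would verify the parity hypothesis of Corollary \ref{corFeq0}. Since $j - 3k + 3 \equiv j + k + 1 \pmod{2}$, every element of $[j-3k+3, j+3k-3]_2$, and hence every element of $S$, is congruent to $j + k + 1 \pmod{2}$, so the corollary applies.

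Next comes the core step: showing that $A$ is the unique $A' \in ASM(k)$ avoiding $(S,j)$. By definition, $A'$ avoids $(S,j)$ iff the 1-entries of $A'$ lie in positions $(i,l')$ of the array \eqref{iArray} whose value $j + 2i + 4l' - 3k - 3$ is not in $S$. Since the whole array takes values in $[j-3k+3, j+3k-3]_2$, the admissible positions are precisely those whose entry equals $j+l-2k$, $j+l+\sigma$, or $j+l+2k$. By Lemma \ref{lemKnight}, the admissible positions form exactly the support of the permutation matrix $A$: there are $k$ of them, one in each row and one in each column. Now recall the standard fact that any $k \times k$ alternating sign matrix contains at least $k$ entries equal to $1$, with equality exactly when it is a permutation matrix (the row sums force $(\#\text{of }1\text{'s}) - (\#\text{of }{-1}\text{'s}) = k$). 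Since the 1-entries of $A'$ are confined to the $k$ admissible positions, $A'$ has at most $k$ ones; combined with the lower bound, it has exactly $k$ ones occupying all admissible positions, forcing $A' = A$.

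With uniqueness in hand, Corollary \ref{corFeq0} reduces to a single summand
\begin{displaymath}
F_{j,k}|_S = \wt(A,j)\,(\wt(B_0(A),j))|_S \prod_{a_{il}=1}(1 + y_{j+2i+4l-3k-3}).
\end{displaymath}
By construction of $A$, each of the $k$ factors corresponds to one of the three values $j+l-2k$, $j+l+\sigma$, $j+l+2k$; grouping yields the stated factorization with nonnegative exponents $a,b,c$ summing to the total number of 1's in $A$, which is $k$. For the final non-vanishing claim, the three distinguished indices $j+l-2k$, $j+l+\sigma$, $j+l+2k$ lie outside $S$, so the variables inside the three factors are not specialized, and each $(1+y_\cdot)$ is a nonzero polynomial; the prefactor $\wt(A,j)(\wt(B_0(A),j))|_S$ is a signed monomial and hence also nonzero.

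The main obstacle in this plan is the uniqueness step, but it reduces cleanly to the classical lower bound on the number of 1-entries in an ASM, which together with Lemma \ref{lemKnight} pins down $A'$ completely.
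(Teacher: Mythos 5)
Your proof is correct and follows essentially the same approach as the paper: both identify the admissible positions with the support of the permutation matrix from Lemma \ref{lemKnight}, establish that $A$ is the unique ASM avoiding $(S,j)$, and finish via Corollary \ref{corFeq0}. The only variation is in the uniqueness step, where you invoke the global count (total number of $1$'s in a $k\times k$ ASM is at least $k$, with equality iff permutation matrix) whereas the paper argues column by column (each column has at least one $1$, the unique admissible slot per column forces exactly one, hence no $-1$'s); both are routine ASM facts and the arguments are interchangeable.
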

\begin{proof}
By Lemma \ref{lemKnight}, all entries of \eqref{iArray} are contained in $S$ except those corresponding to the 1's of $A$.  On the one hand, this means that $A$ avoids $(S,j)$.  On the other hand, let $A' \in ASM(k)$ be any matrix avoiding $(S,j)$.  Each column of $A'$ must have at least one entry equal to 1, and 1's can only occur away from elements of $S$.  So each column of $A'$ must have exactly one 1 and it must be in the same place as a 1 of $A$.  As each column of $A'$ has only a single 1, there cannot be any $-1$'s, so in fact $A' = A$.  Hence $A$ is the only element of $ASM(k)$ avoiding $(S,j)$.  The proposition follows from Corollary \ref{corFeq0}, where $a$, $b$, and $c$ are the sizes of the three segments of the knight's path as defined in the proof of Lemma \ref{lemKnight}.
\end{proof}

\begin{cor}
\label{corFneq0Full}
Fix $l \in [-(k+1),k+1]_2$ and let $S$ be any of
\begin{enumerate}
\item $S = [j+l-2k+4,j+l+2k-4]_4$,
\item $S = [j+l-2k+2,j+l+\sigma-2]_2$, or
\item $S = [j+l+\sigma+2,j+l+2k-2]_2$.
\end{enumerate}
for $\sigma$ as above.
Then $F_{j,k}|_S \not\equiv 0$.
\end{cor}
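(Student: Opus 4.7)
My approach rests on a simple monotonicity principle: if $T \subseteq T'$ and $F_{j,k}|_{T'} \not\equiv 0$ as a polynomial in the remaining variables, then $F_{j,k}|_T \not\equiv 0$ as well, since $F_{j,k}|_{T'}$ is obtained from $F_{j,k}|_T$ by the additional substitutions $y_i = -1$ for $i \in T' \setminus T$. So I would aim to embed each of the three sets $S$ from the corollary into the larger set $S_l = [j-3k+3,j+3k-3]_2 \setminus \{j+l-2k,j+l+\sigma,j+l+2k\}$ of Proposition \ref{propFneq0}, for which we already know $F_{j,k}|_{S_l} \not\equiv 0$.

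A preliminary remark: $F_{j,k}$ only involves variables $y_i$ with $i \in [j-3k+3,j+3k-3]$, since for $(r,s,t) \in P_k$ we have $|3r+s| \leq 3|r|+|s| \leq 3(k-1)$. Consequently, any index of $S$ outside this interval may be discarded without changing $F_{j,k}|_S$.

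Given this, for each case I would verify the containment $S \cap [j-3k+3,j+3k-3]_2 \subseteq S_l$, i.e., that $S$ avoids the three forbidden values $j+l-2k$, $j+l+\sigma$, $j+l+2k$. In Case (1) the step-$4$ progression consists of the elements $j+l-2k+4+4m$ for $m \in [0,k-2]$; the values $j+l\pm 2k$ correspond to $m=-1$ and $m=k-1$, both out of range, and $j+l+\sigma$ requires an offset $m=(\sigma+2k-4)/4$ which is never an integer for the parity of $k$ that produces the given $\sigma$. In Cases (2) and (3) the endpoints of the step-$2$ progression $S$ sit strictly between two of the three forbidden values by construction, and the remaining forbidden value lies far outside $S$ (since $|\sigma|\leq 2 < 2k$).

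The main obstacle I expect is the edge case $l = \pm(k+1)$ in Cases (2) and (3): here the literal set $S$ contains one endpoint ($j-3k+1$ in Case (2) or $j+3k-1$ in Case (3)) that falls just outside the range $[j-3k+3,j+3k-3]$. This is precisely what the preliminary remark on the support of $F_{j,k}$ is designed to handle: after discarding that one extraneous index, the remaining elements still sit inside $S_l$. Once $S \cap [j-3k+3,j+3k-3]_2 \subseteq S_l$ is established in every case, the monotonicity principle combined with Proposition \ref{propFneq0} immediately yields $F_{j,k}|_S \not\equiv 0$.
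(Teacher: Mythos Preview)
Your approach is exactly the paper's: reduce to Proposition~\ref{propFneq0} via the monotonicity principle that enlarging the substitution set cannot make a vanishing polynomial nonvanishing. The paper's proof is the two-line statement that each $S$ is contained in the corresponding set from Proposition~\ref{propFneq0}; your version spells out the containment check and, in particular, handles the boundary cases $l=\pm(k+1)$ (where one index falls just outside the support of $F_{j,k}$) that the paper's phrasing glosses over.
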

\begin{proof}
Each $S$ is contained in the corresponding one from Proposition \ref{propFneq0}.  As fewer substitutions are made, $F_{j,k}|_S$ remains nonzero.
\end{proof}

\begin{cor}
\label{corFneq0}
Let $S$ be a finite arithmetic sequence such that
\begin{itemize}
\item $|S| < k$, 
\item consecutive terms of $S$ differ by 4 or 2, and
\item the elements of $S$ have the same parity as $j+k+1$.
\end{itemize}
Then $F_{j,k}|_S \not\equiv 0$.
\end{cor}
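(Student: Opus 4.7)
The plan is to deduce this from Corollary \ref{corFneq0Full} by observing that if $S \subseteq S^*$, then $F_{j,k}|_{S^*}$ is obtained from $F_{j,k}|_S$ by further substitutions $y_i = -1$; hence $F_{j,k}|_{S^*} \not\equiv 0$ forces $F_{j,k}|_S \not\equiv 0$. It therefore suffices, for each $S$ satisfying the hypotheses, to exhibit a superset $S^*$ of one of the three forms appearing in Corollary \ref{corFneq0Full}.

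As a preliminary reduction, I would assume $S \subseteq [j - 3k + 3, j + 3k - 3]_2$: any $y_i$ with $i$ outside this range does not appear in $F_{j,k}$ (since every $(r,s,t) \in P_k$ satisfies $|3r+s| \leq 3(k-1)$), so substituting $y_i = -1$ is harmless and we may replace $S$ by its intersection with this interval. If the intersection is empty, then $F_{j,k}|_S = F_{j,k} \not\equiv 0$ by the positivity of \eqref{Fjk}.

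With $S \subseteq [j - 3k + 3, j + 3k - 3]_2$ in hand, enlarge $S$ to an arithmetic progression $S'$ of the same step and parity, still inside the ambient interval, whose length matches the target set from Corollary \ref{corFneq0Full}. For common difference $4$, extend $S$ to length $k - 1$ (there is room since $S'$ spans $4(k - 2)$ while the ambient interval spans $6k - 6$); the resulting $S'$ then matches case (1) with $l = \min(S') - j + 2k - 4$, and a direct computation using $\min(S') \in [j-3k+3, j+k+1]$ places $l$ in $[-(k+1), k+1]_2$ with the correct residue modulo $4$ (a short parity check covers the two parities of $k$). For common difference $2$, extend $S$ to length $k - 1$ and apply case (2) or case (3) of Corollary \ref{corFneq0Full}: when $k$ is odd one takes $\sigma = 0$, and the ranges of valid $\min(S') - j$ for the two cases, namely $[-3k+3, -k+3]$ and $[-k+1, k+1]$, together cover every position of $S'$; when $k$ is even one uses case (2) with $\sigma = 2$ or case (3) with $\sigma = -2$ (each of length $k$), selecting the case and $\sigma$ according to $\min(S')$ modulo $4$ and exploiting the freedom to shift $\min(S')$ by $\pm 2$ through the choice of extension.

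The main obstacle is not conceptual but a patient bookkeeping exercise: checking in each subcase that the parameter $l$ forced (or nearly forced) by the containment $S' \subseteq S^*$ lies in $[-(k+1), k+1]_2$ with the correct parity, residue modulo $4$, and value of $\sigma$. Structurally, however, the argument is transparent: as $l$ ranges over $[-(k+1), k+1]_2$, the sets from Corollary \ref{corFneq0Full} enumerate every ``maximal'' short arithmetic progression of the prescribed step and parity inside the support of $F_{j,k}$, so any $S$ with $|S| < k$ finds a home.
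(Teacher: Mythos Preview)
Your approach is essentially the same as the paper's: reduce to $S \subseteq [j-3(k-1),\,j+3(k-1)]$ and then observe that any such $S$ is contained in one of the sets from Corollary~\ref{corFneq0Full}, whence $F_{j,k}|_S \not\equiv 0$ follows by the monotonicity principle you state at the outset. The paper dispatches the containment step in one sentence without the case analysis you sketch, so your write-up is, if anything, more explicit about the bookkeeping that the paper leaves to the reader.
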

\begin{proof}
We may assume without loss of generality that $S \subseteq [j-3(k-1),j+3(k-1)]_1$, as those are the indices of the only $y$-variables that $F_{j,k}$ depends on.  In this case, $S$ is contained in some $S$ from Corollary \ref{corFneq0Full} so we still have $F_{j,k}|_S \not\equiv 0$.
\end{proof}

Proposition \ref{Feq0} and Corollary \ref{corFneq0} give a complete picture as to when $F_{j,k}|_S \equiv 0$ for $S$ of the form $[a,b]_4$ or $[a,b]_2$.  We will use these results to prove confinement for certain singularity types in the next section.  There, we will assume that $n$ is large relative to $|S|$ so that the relations among the $y$-variables do not enter into play.  In contrast, the following proposition pertains to a more severe singularity type, so we will reintroduce those relations at this point.

\begin{prop}
\label{propFneq0Big}
Suppose that $n$ is odd and $S = [2,2n-2]_2$.  Assume that $y_{i+2n} = y_i$ for all $i \in \mathbb{Z}$, $y_i = -1$ for all $i \in S$, and $y_1\cdots y_{2n} = 1$.  Let $j,k \in \mathbb{Z}$ with $j+k$ odd and $k \in \{n,n+1\}$.  Then evaluated at this input, $F_{j,k} \neq 0$ provided $y_0 \neq -1$ and $y_i \neq 0$ for all $i$.
\end{prop}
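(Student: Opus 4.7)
The plan is to apply Corollary~\ref{corFeq0} to $F_{j,k}|_S$, now keeping the cyclic identifications $y_{i+2n}=y_i$ and the relation $y_1\cdots y_{2n}=1$ in force. Under the hypotheses, the only even-indexed $y_i$ that does not reduce to $-1$ is $y_0$ (together with its periodic copies). Since $j+k$ is odd, the entries of \eqref{iArray} are all even, so a matrix $A\in ASM(k)$ contributes a potentially nonzero term to $F_{j,k}|_S$ exactly when each of its $1$'s sits at a \emph{valid} position, meaning $j+2i+4l-3k-3 \equiv 0 \pmod{2n}$; each such $1$ contributes a factor $(1+y_0)$ in \eqref{FASMRed}. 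A short computation shows that for $k=n$ every column and every row of \eqref{iArray} covers each of the $n$ even residues mod $2n$ exactly once (using $\gcd(4,2n)=2$ and $n$ odd), so the valid positions form a permutation matrix $\pi$. For $k=n+1$, there is one ``doubled'' column $l^*$ whose two valid rows are $\{1,n+1\}$ and one ``doubled'' row $i^{**}$ whose two valid columns are $\{1,n+1\}$, except in the exceptional sub-case $j\equiv n \pmod{2n}$, where both columns $\{1,n+1\}$ and both rows $\{1,n+1\}$ are doubled and so all four corner positions are valid.

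Next I determine the avoiding ASMs. For $k=n$, a column with a unique valid row cannot accommodate a $-1$ (this would demand a second valid $1$), so the only avoiding ASM is $\pi$, and
\begin{equation*}
F_{j,n}|_S = \wt(\pi,j)\,(\wt(B_0(\pi),j))|_S\,(1+y_0)^n.
\end{equation*}
This is nonzero because $\wt(\pi,j)$ is a monomial in the odd-indexed $y_i$ (all nonzero by hypothesis), $(\wt(B_0(\pi),j))|_S$ reduces to $\pm y_0^c$ (nonzero since $y_0\neq 0$), and $(1+y_0)^n\neq 0$ since $y_0\neq -1$. For $k=n+1$ in the generic sub-case, an analogous forcing yields a unique avoiding ASM, obtained from the extended central permutation by placing a single $-1$ at the intersection $(i^{**},l^*)$ of the doubled row and doubled column, and its contribution again has the shape $(\text{nonzero monomial})\cdot(1+y_0)^{n+2}$, hence is nonzero. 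In the exceptional sub-case the central $n-1$ ones are still forced, but the corner $2\times 2$ block permits exactly two completions---the ``identity'' extension $A_1$ and the ``swap'' extension $A_2$---each a permutation matrix whose contribution is $W_i\,(1+y_0)^{n+1}$ with $W_i = \wt(A_i,j)\,(\wt(B_0(A_i),j))|_S$.

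The main obstacle is then to show $W_1+W_2\neq 0$ in the exceptional sub-case. For this I invoke Lemma~\ref{lemBij2}, which describes explicitly the symmetric differences $I_2\setminus I_1\subseteq Q_k$ and $J_2\setminus J_1 \subseteq Q_{k+1}$ of the order ideals associated to $A_1,A_2$ and to $B_0(A_1),B_0(A_2)$. A direct count of the indices $3r+s+j \pmod{2n}$ arising in these symmetric differences (with $n$ odd so that $\gcd(2,n)=1$ makes the counts symmetric) shows that $\wt(A_2,j)/\wt(A_1,j) = (y_1 y_3 \cdots y_{2n-1})^n$ and, after substitution, $(\wt(B_0(A_2),j))|_S / (\wt(B_0(A_1),j))|_S = y_0^{n+1}$ (the $-1$ signs cancel in pairs because each $y_i$ with $i\in S$ is hit an even number of times). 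Together with the constraint $y_0\,y_1 y_3 \cdots y_{2n-1}=1$, which follows from $y_1\cdots y_{2n}=1$ and $(-1)^{n-1}=1$, this yields the clean identity $W_2/W_1 = y_0$, so
\begin{equation*}
F_{j,n+1}|_S = (W_1+W_2)(1+y_0)^{n+1} = W_1\,(1+y_0)^{n+2},
\end{equation*}
which is nonzero under the stated hypotheses by the same reasoning as before.
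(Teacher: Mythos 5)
Your proposal is correct and follows the same route as the paper: split into $k=n$ and $k=n+1$; for $k=n$ identify a unique avoiding permutation (the paper cites Proposition~\ref{propFneq0} with $l\equiv -j\pmod{2n}$, you re-derive the same fact directly); for $k=n+1$ split on $j\bmod 2n$, get a unique avoiding ASM with $n+2$ ones in the generic sub-case, and in the exceptional sub-case $j\equiv n$ get exactly two avoiding permutation matrices related by the corner swap of Lemma~\ref{lemBij2}, then compute the ratio of weights and use the relation $y_1\cdots y_{2n}=1$ to show the two terms do not cancel. The paper keeps the full monomial $y_0^{n+1}(y_2y_4\cdots y_{2n-2})^{n+2}$ and simplifies to $(1+y_0)$ via $y_0y_1\cdots y_{2n-1}=1$; you substitute $y_i=-1$ in the ratio first and then use $y_0\,y_1y_3\cdots y_{2n-1}=1$. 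These are algebraically identical.

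One small imprecision: you justify the vanishing of the sign by saying ``each $y_i$ with $i\in S$ is hit an even number of times,'' but in $\wt(B_0(A_2),j)/\wt(B_0(A_1),j)=y_0^{n+1}(y_2\cdots y_{2n-2})^{n+2}$ each such $y_i$ appears with exponent $n+2$, which is \emph{odd} for $n$ odd. The sign is still $+1$, but the right reason is that there are $n-1$ (an even number) of the indices in $S$, so $((-1)^{n-1})^{n+2}=1$. The conclusion $W_2/W_1=y_0$ and the final $F_{j,n+1}|_S=W_1(1+y_0)^{n+2}\neq 0$ are correct.
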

\begin{proof}
First suppose that $k = n$, which implies that $k$ is odd and $j$ is even.  Then in Proposition \ref{propFneq0}, $\sigma = 0$.  Moreover, we can choose $l \in [-(k+1),(k+1)]$ such that $l \equiv -j \pmod{2n}$.  So 
\begin{displaymath}
j+l-2k \equiv j+l+\sigma \equiv j+l+2k \equiv 0 \pmod{2n}
\end{displaymath}
By Proposition \ref{propFneq0} we have
\begin{displaymath}
F_{j,k} = M(1+y_0)^n
\end{displaymath}
for some monomial $M$ in $y_1,\ldots,y_{2n}$.  Since $y_0 \neq -1$ and no $y_i$ is zero, we have that $F_{j,k} \neq 0$.

Now let $k=n+1$, in which case $k$ is even and $j$ is odd.  Since $y_{i+2n} = y_i$ for all $i$, we can consider the entries of \eqref{iArray} modulo $2n$.  For example, if $n=7$, $k=8$, and $j=3$ the result is
\begin{displaymath}
\left(
\begin{array}{cccccccc}
10 & 0 & 4 & 8 & 12 & 2 & 6 & 10\\
12 & 2 & 6 & 10 & 0 & 4 & 8 & 12\\
0 & 4 & 8 & 12 & 2 & 6 & 10 & 0\\
2 & 6 & 10 & 0 & 4 & 8 & 12 & 2\\
4 & 8 & 12 & 2 & 6 & 10 & 0 & 4\\
6 & 10 & 0 & 4 & 8 & 12 & 2 & 6\\
8 & 12 & 2 & 6 & 10 & 0 & 4 & 8\\
10 & 0 & 4 & 8 & 12 & 2 & 6 & 10\\
\end{array}
\right)
\end{displaymath}

Since $n$ is odd, each row and each column contains each of $0,2,4,\ldots, 2n-2$ with a single repeat, namely the first and last entry.  We are interested in matrices $A \in ASM(k)$ whose 1's avoid $S$ and hence all correspond to elements of the array equal to 0.  There are two cases.  If $j \neq n$, then the corner entries of the array are nonzero.  This is the situation in the above example.  There will always be a single row and column in the interior of the array that each start and end with 0.  Every other row and column will have a single 0.  There is a unique alternating sign matrix $A$ avoiding $(S,j)$ in this case.  It has 1's everywhere there is a 0 in the array, and a single -1 where needed.  The matrix $A$ corresponding to the above example is
\begin{displaymath}
\left(
\begin{array}{cccccccc}
0 & 1 & 0 & 0 & 0 & 0 & 0 & 0 \\
0 & 0 & 0 & 0 & 1 & 0 & 0 & 0 \\
1 & -1 & 0 & 0 & 0 & 0 & 0 & 1 \\
0 & 0 & 0 & 1 & 0 & 0 & 0 & 0 \\
0 & 0 & 0 & 0 & 0 & 0 & 1 & 0 \\
0 & 0 & 1 & 0 & 0 & 0 & 0 & 0 \\
0 & 0 & 0 & 0 & 0 & 1 & 0 & 0 \\
0 & 1 & 0 & 0 & 0 & 0 & 0 & 0 \\
\end{array}
\right)
\end{displaymath}
This matrix will always have $n+2$ entries equal to 1, so by Corollary \ref{corFeq0} we have
\begin{displaymath}
F_{j,k} = M(1+y_0)^{n+2} \neq 0
\end{displaymath}
where $M$ is a monomial.

Lastly, if $j = n$ (still assuming $k=n+1$) then the corner entries of the array all equal 0.  In the case $n=7$ this looks like
\begin{displaymath}
\left(
\begin{array}{cccccccc}
0 & 4 & 8 & 12 & 2 & 6 & 10 & 0 \\
2 & 6 & 10 & 0 & 4 & 8 & 12 & 2 \\
4 & 8 & 12 & 2 & 6 & 10 & 0 & 4 \\
6 & 10 & 0 & 4 & 8 & 12 & 2 & 6 \\
8 & 12 & 2 & 6 & 10 & 0 & 4 & 8 \\
10 & 0 & 4 & 8 & 12 & 2 & 6 & 10 \\
12 & 2 & 6 & 10 & 0 & 4 & 8 & 12 \\
0 & 4 & 8 & 12 & 2 & 6 & 10 & 0 \\
\end{array}
\right)
\end{displaymath}
In general, there will be two alternating sign matrices avoiding $(S,j)$, both permutation matrices.  For $n=7$ they are
\begin{displaymath}
\left(
\begin{array}{cccccccc}
1 & 0 & 0 & 0 & 0 & 0 & 0 & 0 \\
0 & 0 & 0 & 1 & 0 & 0 & 0 & 0 \\
0 & 0 & 0 & 0 & 0 & 0 & 1 & 0 \\
0 & 0 & 1 & 0 & 0 & 0 & 0 & 0 \\
0 & 0 & 0 & 0 & 0 & 1 & 0 & 0 \\
0 & 1 & 0 & 0 & 0 & 0 & 0 & 0 \\
0 & 0 & 0 & 0 & 1 & 0 & 0 & 0 \\
0 & 0 & 0 & 0 & 0 & 0 & 0 & 1\\
\end{array}
\right)
\textrm{ and }
\left(
\begin{array}{cccccccc}
0 & 0 & 0 & 0 & 0 & 0 & 0 & 1 \\
0 & 0 & 0 & 1 & 0 & 0 & 0 & 0 \\
0 & 0 & 0 & 0 & 0 & 0 & 1 & 0 \\
0 & 0 & 1 & 0 & 0 & 0 & 0 & 0 \\
0 & 0 & 0 & 0 & 0 & 1 & 0 & 0 \\
0 & 1 & 0 & 0 & 0 & 0 & 0 & 0 \\
0 & 0 & 0 & 0 & 1 & 0 & 0 & 0 \\
1 & 0 & 0 & 0 & 0 & 0 & 0 & 0 \\
\end{array}
\right)
\end{displaymath}
Call these $A$ and $A'$ respectively and let $B = B_0(A)$, $B'=B_0(A')$.  By Corollary \ref{corFeq0} we have
\begin{displaymath}
F_{j,k} = (\wt(A,j)\wt(B,j)+\wt(A',j)\wt(B',j))(1+y_0)^{n+1}
\end{displaymath}
Now, Lemma \ref{lemBij2} determines how the order ideals corresponding to $A$, $A'$, $B$, and $B'$ relate to each other.  Using this result, one can check that the weights are related by
\begin{align*}
\wt(A',j) &= (y_1y_3\cdots y_{2n-1})^n\wt(A,j) \\
\wt(B',j) &= y_0^{n+1}(y_2y_4\cdots y_{2n-2})^{n+2}\wt(B,j)
\end{align*}
Consequently,
\begin{displaymath}
F_{j,k} = M(1+(y_0y_1\cdots y_{2n-1})^ny_0(y_2y_4\cdots y_{2n-2})^2)(1+y_0)^{n+1}
\end{displaymath}
where $M = \wt(A,j)\wt(B,j)$.  Now, $y_0y_1y_2\cdots y_{2n-1} = 1$ and $y_2=y_4= \ldots = y_{2n-2} = -1$ so 
\begin{displaymath}
F_{j,k} = M(1+y_0)(1+y_0)^{n+1} = M(1+y_0)^{n+2} \neq 0
\end{displaymath}
\end{proof}

\section{Singularity patterns}
For $i \in \mathbb{Z}$ let
\begin{displaymath}
X_i = \{A \in \mathcal{P}_n : y_{2i}(A) = -1\}
\end{displaymath}
For $j \in (\frac{1}{2} + \mathbb{Z})$ let
\begin{displaymath}
Y_j = \{A \in \mathcal{P}_n : y_{2j}(A) = -1\}
\end{displaymath}

The reason for the different notation is as follows.

\begin{lem}[{\cite[Lemma 7.2]{G}}]
\label{lemColCon}
Let $A \in \mathcal{P}_n$, $i \in \mathbb{Z}$, $j \in (\frac{1}{2} + \mathbb{Z})$.
\begin{enumerate}
\item $A \in X_i$ if and only if $A_{i-2}$, $A_i$, and $A_{i+2}$ are collinear.
\item $A \in Y_j$ if and only if $a_{j-2}$, $a_j$, and $a_{j+2}$ are concurrent.
\end{enumerate}
\end{lem}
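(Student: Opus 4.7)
The plan rests on a single elementary algebraic identity: for any four scalars (or four collinear points, or four concurrent lines),
\begin{equation*}
\chi(a,b,c,d) \;=\; 1 \quad\Longleftrightarrow\quad a=d \ \text{ or }\ b=c.
\end{equation*}
To see this, clear denominators in $(b-a)(c-d) = (a-c)(d-b)$; expanding and collecting terms yields $(a-d)(b-c) = 0$. Since $y_{2i}(A)=-1$ is equivalent to the relevant $\chi$ equaling $1$ by the cross-ratio definitions \eqref{yvertex} and \eqref{yedge}, both statements of the lemma reduce to deciding which of the two equality cases in the identity above is actually realizable under the given general-position hypothesis.

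For part~(1), the condition $A \in X_i$ translates via \eqref{yvertex} into
\begin{equation*}
\chi\bigl(\join{A_i}{A_{i-2}},\,\join{A_i}{A_{i-1}},\,\join{A_i}{A_{i+1}},\,\join{A_i}{A_{i+2}}\bigr) = 1.
\end{equation*}
The identity forces either $\join{A_i}{A_{i-2}} = \join{A_i}{A_{i+2}}$ or $\join{A_i}{A_{i-1}} = \join{A_i}{A_{i+1}}$. The second would make $A_{i-1}, A_i, A_{i+1}$ collinear, contradicting the standing assumption that every quadruple of consecutive vertices is in general position. Hence only the first alternative survives, and it is exactly the assertion that $A_{i-2}$, $A_i$, $A_{i+2}$ are collinear. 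The converse direction simply reverses this chain of equivalences.

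For part~(2), write $j = k+\tfrac12$ with $k \in \mathbb{Z}$. Then $A \in Y_j$ becomes $\chi(B, A_k, A_{k+1}, E) = 1$ by \eqref{yedge}, where $B = \meet{a_{k-3/2}}{a_{k+1/2}}$ and $E = \meet{a_{k+1/2}}{a_{k+5/2}}$ both lie on the side $a_{k+1/2}$. The identity now forces either $B = E$ or $A_k = A_{k+1}$; the latter is excluded since consecutive vertices of a twisted polygon are distinct. Thus $B = E$, which is precisely the condition that the three lines $a_{k-3/2} = a_{j-2}$, $a_{k+1/2} = a_j$, and $a_{k+5/2} = a_{j+2}$ share a common point.

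The only subtle point of the argument — and not a serious obstacle — is the bookkeeping needed to rule out the spurious case $b=c$ in each application of the identity. In part~(1) this uses the general-position assumption on quadruples of consecutive vertices, while in part~(2) it uses only the fact that consecutive vertices are distinct. Both hypotheses are in force, so the proof is complete.
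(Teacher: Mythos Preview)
The paper does not actually prove this lemma; it is quoted verbatim from \cite[Lemma~7.2]{G} and stated without proof. So there is no ``paper's own proof'' to compare against.

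That said, your argument is correct and is essentially the natural one. The algebraic identity $\chi(a,b,c,d)=1 \Leftrightarrow (a-d)(b-c)=0$ is right, and your application of it via \eqref{yvertex} and \eqref{yedge} is clean. The bookkeeping in part~(2) (writing $j=k+\tfrac12$ and matching $a_{k-3/2},a_{k+1/2},a_{k+5/2}$ with $a_{j-2},a_j,a_{j+2}$) is handled correctly, and you properly invoke the general-position hypothesis to discard the spurious branch in each case. One tiny cosmetic point: in part~(1) you actually pass through $y_{2i}=-1 \Leftrightarrow \chi^{-1}=1 \Leftrightarrow \chi=1$, which you gloss over but which is of course harmless.
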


Define in the same way subvarieties $X_j \subseteq \mathcal{P}_n^*$ for $j \in (\frac{1}{2} + \mathbb{Z})$ and $Y_i \subseteq \mathcal{P}_n^*$ for $i \in \mathbb{Z}$.

For $S \subseteq \mathbb{Z}$ or $S \subseteq (\frac{1}{2} + \mathbb{Z})$ let
\begin{align*}
X_S &= \bigcap_{i\in S} X_i \\
Y_S &= \bigcap_{i\in S} Y_i
\end{align*}
For instance, $X_{\{3,5\}} = X_3 \cap X_5$ is the set of twisted polygons $A$ for which $A_1, A_3,A_5,A_7$ are all collinear.  On $X_S$, we have that $y_{2i} = -1$ for all $i \in S$.  Therefore we can replace the $F_{j,k}$ in \eqref{Tkx} with $F_{j,k}|_{2S}$ where $2S = \{2i : i \in S\}$.  If all of these restricted polynomials are nonzero, then the corresponding iterate of the pentagram map is defined generically on $X_S$.

\begin{thm}
\label{thmSingConf}
Let $i,m \in \mathbb{Z}$ with $1 \leq m < n/3-1$.  Let
\begin{align*}
S &= [i-(m-1), i+(m-1)]_2 \\
S' &= \left[i-\frac{1}{2}(m-1), i+\frac{1}{2}(m-1)\right]_1
\end{align*}
Then the map $T^k$ is singular on $X_S$ for $1 \leq k \leq m+1$, but $T^{m+2}$ is nonsingular at generic $A \in X_S$.  Moreover, $T^{m+2}(A) \in Y_{S'}$ for such $A$.
\end{thm}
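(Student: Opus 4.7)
The plan is to reformulate everything in terms of the $F$-polynomials via (2.4). The locus $X_S$ corresponds to the substitutions $y_i = -1$ for $i \in 2S$, where $2S = [2i - 2(m-1), 2i + 2(m-1)]_4$ is an arithmetic progression of common difference $4$ and length $m$. Each denominator in (2.4) is a product $F_{j-2,k-1}F_{j+1,k}$ or $F_{j+1,k-1}F_{j-2,k}$, so $T^k$ is singular at a generic point of $X_S$ precisely when some relevant $F_{j',k'}|_{2S}$ vanishes identically. The hypothesis $m < n/3-1$ ensures that the $F$-polynomials involved depend only on $y$-variables with pairwise distinct indices, so the cyclic identifications $y_{i+2n} = y_i$ and $y_1 \cdots y_{2n} = 1$ can be ignored.

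For the singularity of $T^k$ with $1 \leq k \leq m$ I would apply Proposition~\ref{Feq0}(1): the set $2S$ contains an AP of common difference $4$ with $k$ terms, so there is an index $j'$ (with $j'+k$ odd) for which $F_{j',k}|_{2S} \equiv 0$, and choosing $j$ of the corresponding parity makes the factor $F_{j+1,k}$ or $F_{j-2,k}$ of the denominator in (2.4) identically zero on $X_S$. For $k=m+1$ the same proposition, applied with exponent $m$, gives $F_{j',m}|_{2S} \equiv 0$ for each $j' \in 2S' = [2i-(m-1), 2i+(m-1)]_2$, and this $F$ occupies the $F_{\cdot,k-1}$ slot of a denominator at step $m+1$. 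For nonsingularity of $T^{m+2}$ I would use Corollary~\ref{corFneq0}: a parity check shows that each $F_{j',k'}$ appearing in a denominator at step $k=m+2$ has $j'+k'+1$ even, matching the parity of $2S$; since $|2S|=m < k'$ and $2S$ is an AP of common difference $4$, the corollary yields $F_{j',k'}|_{2S} \not\equiv 0$ in every case.

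It remains to show $y_{j_0}(T^{m+2}(A)) = -1$ for $j_0 \in 2S'$. A parity calculation places us in the second branch of (2.5), reducing the task to evaluating
\begin{displaymath}
y_{j_0}(T^{m+2}(A)) = M_{j_0,m+1}^{-1} \, \frac{F_{j_0-3,m+1}\, F_{j_0+3,m+1}}{F_{j_0-1,m+1}\, F_{j_0+1,m+1}}
\end{displaymath}
modulo the substitutions $y_i = -1$, $i \in 2S$. For $m=1$ a direct computation verifies the claim: $F_{2i \pm 3,2}|_{y_{2i}=-1}$ loses its leading ASM contribution (which carries the factor $1+y_{2i}$), leaving a single residual term whose weight contains one copy of $y_{2i} = -1$, while $F_{2i \pm 1,2}|_{y_{2i}=-1}$ preserves its leading term; the ratio of these cancels most of the weights and combines with $M_{2i,2}^{-1}$ to give $-1$. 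The main obstacle is extending this to general $m$: one must identify, via the ASM expansion~\eqref{FASM} together with Lemmas~\ref{lemBij1} and~\ref{lemBij2}, exactly which alternating sign matrices in $ASM(m+1)$ avoid the relevant pairs $(2S, j_0 \pm \epsilon)$ for $\epsilon \in \{1,3\}$, and then account for every sign arising from substitution so that the four $F$-polynomial evaluations combine cleanly with $M_{j_0,m+1}^{-1}|_{2S}$ to produce $-1$. An induction on $m$, or a direct ASM enumeration paralleling the proof of Proposition~\ref{propFneq0}, is likely the cleanest route.
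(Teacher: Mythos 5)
Your treatment of the singularity of $T^k$ for $1 \le k \le m+1$ and of the nonsingularity of $T^{m+2}$ tracks the paper's proof closely (Proposition~\ref{Feq0} for the vanishing, Corollary~\ref{corFneq0} for the nonvanishing, with the observation that $m < n/3-1$ lets you ignore the cyclic identifications). That part is fine.

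The genuine gap is in the last step, proving $T^{m+2}(A) \in Y_{S'}$. You only verify $m=1$ by hand and then propose either an induction or an ASM enumeration paralleling Proposition~\ref{propFneq0} to handle the sign bookkeeping in the four $F$-polynomial evaluations of~\eqref{Tky}. That program is not carried out, and it is also far harder than necessary: you are trying to recompute from scratch something the recurrence~\eqref{Fjkrec} hands you for free. Having already established $F_{j_0,m}|_{2S}\equiv 0$ for $j_0 \in 2S'$, you should multiply~\eqref{Fjkrec} (at $j=j_0$, $k=m+1$) through by $F_{j_0,m}$ to get
\[
F_{j_0,m+2}F_{j_0,m} = F_{j_0-3,m+1}F_{j_0+3,m+1} + M_{j_0,m+1}F_{j_0-1,m+1}F_{j_0+1,m+1},
\]
and note that the left side vanishes upon the substitution $y_i=-1$, $i\in 2S$. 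Since by your own nonvanishing argument none of $F_{j_0\pm 1,m+1}$ vanish generically on $X_S$, you can rearrange to get exactly $-1 = M_{j_0,m+1}^{-1}\,\dfrac{F_{j_0-3,m+1}F_{j_0+3,m+1}}{F_{j_0-1,m+1}F_{j_0+1,m+1}} = y_{j_0}(T^{m+2}(A))$, with no ASM enumeration or sign-chasing required. Without this (or an actually completed version of your combinatorial alternative), the proof of $T^{m+2}(A) \in Y_{S'}$ is incomplete for $m\ge 2$.
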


In words, the theorem says that if $A$ is a twisted polygon such that 
\begin{displaymath}
A_{i-(m+1)},A_{i-(m-1)},A_{i-(m-3)},\ldots,A_{i+(m+1)}
\end{displaymath}
are collinear then $A$ is a singular point of the first $m+1$ iterates of the pentagram map.  Moreover, a generic such $A$ is not a singular point of $T^{m+2}$ and the sides 
\begin{displaymath}
b_{i-\frac{m+3}{2}},b_{i-\frac{m+1}{2}},b_{i-\frac{m-1}{2}},\ldots,b_{i+\frac{m+3}{2}}
\end{displaymath}
of $B=T^{m+2}(A)$ pass alternately through two points.

\begin{lem}
\label{lemSing}
$F_{j,m}|_{2S} \equiv 0$ for all $j \in 2S'$.
\end{lem}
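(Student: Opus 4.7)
The plan is to reduce the claim directly to Proposition \ref{Feq0}(1), which already handles sets of the form $[j+l-2(k-1),j+l+2(k-1)]_4$. The key observation is that $2S = [2i-2(m-1),\,2i+2(m-1)]_4$ is itself an arithmetic progression of common difference $4$, so it fits the shape required by that proposition as soon as we can find a suitable offset $l$.

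First I would rewrite $2S$ as $[j+l-2(m-1),\,j+l+2(m-1)]_4$ with $l := 2i-j$. This identity is immediate from the definition of $2S$, so the only real work is checking that this choice of $l$ satisfies the hypotheses of Proposition \ref{Feq0} with $k=m$, namely that $l \in [-(m-1),m-1]_2$. For the range condition, $j \in 2S' = [2i-(m-1),\,2i+(m-1)]_2$ gives $|2i-j| \le m-1$. For the parity condition, elements of $2S'$ have the same parity as $m-1$, so $l = 2i - j \equiv -(m-1) \equiv m-1 \pmod 2$, as required.

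Once these two checks are in place, Proposition \ref{Feq0}(1) applied with this $l$ and $k=m$ yields $F_{j,m}|_{2S} \equiv 0$, completing the proof.

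The only potential obstacle I can see is a bookkeeping one: making sure the parity of the elements of $2S'$ (which comes out as $m-1 \pmod 2$, independent of the parity of $i$) really does match the required parity of $l$ in Proposition \ref{Feq0}(1). Since $S'$ is an integer progression only when $m$ is odd, one should also verify that the analogue makes sense (or is vacuous) when $m$ is even — but this is already handled by the blanket convention on $[a,b]_k$ requiring $a,b\in\mathbb{Z}$, so for $m$ even the reader should interpret $2S'$ directly as the integer progression $[2i-(m-1),2i+(m-1)]_2$, and the argument above goes through unchanged.
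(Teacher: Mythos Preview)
Your proof is correct and is essentially identical to the paper's own argument: both set $l = 2i - j$, verify that $l \in [-(m-1),m-1]_2$ from the membership $j \in 2S'$, rewrite $2S$ as $[j+l-2(m-1),j+l+2(m-1)]_4$, and invoke Proposition~\ref{Feq0}(1) with $k=m$. Your additional remarks on parity and on the half-integer nature of $S'$ for even $m$ are valid clarifications but do not change the substance of the proof.
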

\begin{proof}
Suppose $j \in 2S'$.  Then $j = 2i - l$ for some $l \in [-(m-1),m-1]_2$.  Hence $2S = [j+l-2(m-1),j+l+2(m-1)]_4$.  So by Proposition \ref{Feq0} we have that $F_{j,m}|_{2S} \equiv 0$.
\end{proof}

\begin{lem}
\label{lemConf}
$F_{j,k}|_{2S} \not\equiv 0$ for all $j,k \in \mathbb{Z}$ with $k \in \{m+1,m+2\}$ and $j+k$ odd. 
\end{lem}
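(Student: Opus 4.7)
The plan is to prove Lemma \ref{lemConf} by a direct verification that the hypotheses of Corollary \ref{corFneq0} are satisfied for the set $2S$ with the given parameters $j,k$, so that the already-established nonvanishing result applies.

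First I would unpack the definitions. Since $S = [i-(m-1),i+(m-1)]_2$ has $m$ elements, the doubled set is
\begin{displaymath}
2S = [2i-2(m-1),\, 2i+2(m-1)]_4,
\end{displaymath}
which is a finite arithmetic progression of length $m$ with common difference $4$. I want to feed this into Corollary~\ref{corFneq0}, which requires three conditions on the arithmetic progression being substituted.

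Next I would check each hypothesis in turn. (i) Length: we have $|2S| = m$, and by assumption $k \in \{m+1,m+2\}$, so $|2S| = m < k$, as required. (ii) Common difference: the common difference of $2S$ is $4$, which is one of the two allowed values. (iii) Parity: every element of $2S$ is even; since $j+k$ is odd by hypothesis, $j+k+1$ is even, so the elements of $2S$ indeed share the parity of $j+k+1$. All three conditions hold, so Corollary~\ref{corFneq0} yields $F_{j,k}|_{2S} \not\equiv 0$ immediately.

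There is essentially no obstacle here; the content of the lemma is that the singularity data from $X_S$ precisely fits the ``within one step of the critical size'' regime covered by the $F$-polynomial analysis of Section~3, and this has been pre-packaged in Corollary~\ref{corFneq0}. The only thing worth a sentence of commentary is the implicit assumption $m < n/3 - 1$ from Theorem~\ref{thmSingConf}: this guarantees that in the cyclic index set of $y$-variables, the progression $2S$ fits inside the support $[j-3(k-1),j+3(k-1)]$ of $F_{j,k}$ without wrapping around, so that the unrestricted statement of Corollary~\ref{corFneq0} (which treats the $y_i$ as independent variables) is applicable here without worrying about the relation $y_1\cdots y_{2n} = 1$ or the periodicity $y_{i+2n} = y_i$.
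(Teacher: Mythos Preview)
Your proposal is correct and follows essentially the same approach as the paper: verify the hypotheses of Corollary~\ref{corFneq0} for the progression $2S$, and then note that the bound $m < n/3 - 1$ from Theorem~\ref{thmSingConf} ensures the support of $F_{j,k}$ is short enough that the periodicity $y_{i+2n}=y_i$ does not interfere. The paper spends a bit more detail on the latter point (computing that $6k-5 \leq 2n$), while you spell out the three hypotheses of the corollary more explicitly, but the structure is the same.
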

\begin{proof}
This would seem to follow immediately from Corollary \ref{corFneq0}.  The only difficulty is that the assumption $y_{i+2n} = y_i$ was relaxed in that section.  Recall that $F_{j,k}$ depends only on those $y_i$ for $i \in [j-3(k-1),j+3(k-1)]_1$, a total of $6k-5$ consecutive variables.  We are assuming that $k \leq m+2 < n/3+1$ so $6k-5 \leq 2n$.  It follows that assuming $y_{i+2n}=y_i$ has no effect on the $y$-variables appearing in $F_{j,k}$.
\end{proof}

\begin{proof}[Proof of Theorem \ref{thmSingConf}]
By Lemma \ref{lemSing} and \eqref{Tkx} we have that $T^{m+1}$ and $T^m$ are singular on $X_S$.  This same lemma applied to smaller $m$ implies that all $T^k$ with $k < m$ are also singular on $X_S$.

Now let $k=m+2$.  Then Lemma \ref{lemConf} shows that none of the factors in the expression \eqref{Tkx} for $T^k$ are identically zero.  Hence \eqref{Tkx} defines $T^{m+2}$ generically on $X_S$.  It remains to show that the image is in $Y_{S'}$.  Let $A \in X_S$ be such that $B = T^{m+2}(A)$ is defined and let $y_j = y_j(A)$ for all $j \in \mathbb{Z}$.  Let $j \in S'$ be given.  By Lemma \ref{lemSing}, $F_{2j,m} = 0$.  Therefore by \eqref{Fjkrec}
\begin{displaymath}
0 = F_{2j-3,m+1}F_{2j+3,m+1} + M_{2j,m+1}F_{2j-1,m+1}F_{2j+1,m+1}
\end{displaymath}
Rearranging and using \eqref{Tky}
\begin{align*}
-1 = (M_{2j,m+1})^{-1}\frac{F_{2j-3,m+1}F_{2j+3,m+1}}{F_{2j-1,m+1}F_{2j+1,m+1}} = y_{2j}(B)
\end{align*}
This is justified because $T^{m+2}(A)$ is defined so the factors being divided by are nonzero.  We have $y_{2j}(B) = -1$ for all $j \in S'$ so $B \in Y_{S'}$ as desired.  
\end{proof}

The roles of $S$ and $S'$ can be interchanged in Theorem \ref{thmSingConf}.  This is apparently an instance of projective duality.
\begin{thm}
\label{thmSingConf2}
Let $S$ and $S'$ be as in Theorem \ref{thmSingConf}.  Then the map $T^k$ is singular on $X_{S'}$ for $1 \leq k \leq m+1$, but $T^{m+2}$ is nonsingular at generic $A \in X_{S'}$.  Moreover, $T^{m+2}(A) \in Y_{S}$ for such $A$.
\end{thm}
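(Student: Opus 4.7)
The plan is to mirror the proof of Theorem \ref{thmSingConf}, interchanging the roles of cases (1) and (2) in Proposition \ref{Feq0}. That proof rested on two lemmas, and I would establish the analogues of both with $2S$ and $2S'$ swapped, then combine them through \eqref{Tkx}, \eqref{Tky}, and the recursion \eqref{Fjkrec} exactly as before.

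For the dual of Lemma \ref{lemSing}, I claim $F_{j,m}|_{2S'} \equiv 0$ for every $j \in 2S$. Given such a $j$, write $j = 2i + 2l$ with $l \in [-(m-1), m-1]_2$. Then
\[
2S' = [2i-(m-1), 2i+(m-1)]_2 = [j + 2(-l) - (m-1),\, j + 2(-l) + (m-1)]_2,
\]
which is exactly the set from case (2) of Proposition \ref{Feq0} with $k = m$ and free parameter $-l \in [-(m-1), m-1]_2$. Hence $F_{j,m}|_{2S'} \equiv 0$.

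For the dual of Lemma \ref{lemConf}, I want $F_{j,k}|_{2S'} \not\equiv 0$ whenever $k \in \{m+1, m+2\}$ and $j+k$ is odd. The set $2S'$ is a progression of $m$ even integers with common difference $2$. Since $|2S'| = m < k$ and the elements of $2S'$ have the same parity as $j + k + 1$, Corollary \ref{corFneq0} applies. The bound $m < n/3 - 1$ yields $6k - 5 \leq 2n$, so the periodicity relation $y_{i+2n} = y_i$ leaves untouched all $y$-variables appearing in $F_{j,k}$, exactly as in the proof of Lemma \ref{lemConf}.

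The remainder is mechanical. The dual Lemma \ref{lemSing}, applied at $m$ and at smaller values, makes some factor in the denominator of \eqref{Tkx} for $T^k$ vanish on $X_{S'}$ for each $1 \leq k \leq m+1$, giving singularity there. The dual Lemma \ref{lemConf} shows that no factor in the formula \eqref{Tkx} for $T^{m+2}$ is identically zero, so $T^{m+2}$ is generically defined on $X_{S'}$. To verify $T^{m+2}(A) \in Y_S$ for generic $A \in X_{S'}$, fix $j \in S$; since $2j \in 2S$, the dual Lemma \ref{lemSing} gives $F_{2j, m} = 0$ on $X_{S'}$, so \eqref{Fjkrec} at index $m+1$ forces
\[
F_{2j-3, m+1} F_{2j+3, m+1} + M_{2j, m+1} F_{2j-1, m+1} F_{2j+1, m+1} = 0,
\]
and rearranging and invoking \eqref{Tky} converts this into $y_{2j}(T^{m+2}(A)) = -1$. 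The only delicate step is the first displayed equation — confirming that $2S'$ really falls under case (2) rather than case (1) of Proposition \ref{Feq0}; after that, the entire argument is a direct transcription of the original proof, and the apparent projective-duality symmetry is realized concretely by the row/column symmetry of the array \eqref{iArray}.
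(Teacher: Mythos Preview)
Your proposal is correct and follows essentially the same approach as the paper. The paper's own proof simply states that one swaps $S$ and $S'$ throughout in the proof of Theorem \ref{thmSingConf} and its lemmas, now invoking case (2) of Proposition \ref{Feq0} and the common-difference-$2$ case of Corollary \ref{corFneq0}; your write-up carries out exactly these substitutions in detail.
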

\begin{proof}
The proof is essentially obtained by switching $S$ and $S'$ throughout in the proofs of Theorem $\ref{thmSingConf}$ and its lemmas.  The only difference is that the proof now utilizes different cases of Proposition \ref{Feq0} and Corollary \ref{corFneq0}, namely the cases involving arithmetic sequences with common difference 2.
\end{proof}

We now have singularity confinement on $X_S$ for $S$ an arithmetic sequence whose terms differ by 1 or 2.  Generally, if $S$ is a disjoint union of such sequences which are far apart from each other, then the corresponding singularities do not affect each other.  Hence singularity confinement holds and the number of steps needed to get past the singularity is dictated by the length of the largest of the disjoint sequences.

Not all singularity types are of this form.  For instance, consider $S=\{3,4,7,8\}$.  By Theorem \ref{thmSingConf2}, $T^4$ is defined generically on both $X_{\{3,4\}}$ and $X_{\{7,8\}}$.  One can check that $T^4$ is singular on $X_S$, although another step does suffice to move past the singularity.  For general types $S$, it is difficult to predict how many steps the corresponding singularities last.  However, it would seem that singularity confinement does hold outside of some exceptional cases.  

If $n$ is odd, the only exceptional type is $S=[1,n]_1$.  Moreover, for any other $S$ and generic $A \in X_S$ the corresponding singularity lasts at most $n$ steps.  We establish this by considering the worst case where $|S| = n-1$.

\begin{prop}
\label{propSingConf}
Suppose $n$ is odd and let $S = [1,n]_1 \setminus \{i\}$ for some $i \in [1,n]_1$.  Then $T^{n+1}$ is nonsingular at generic $A \in X_S$.
\end{prop}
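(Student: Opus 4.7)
My plan is to reduce to the special case $i=n$ handled by Proposition \ref{propFneq0Big} and then read off the claim from the denominator polynomials in formula \eqref{Tkx}. First, I would observe that $A \in X_S$ means exactly $y_{2l}(A) = -1$ for every $l \in S$. Both the $F$-polynomial recursion and the expression for $T^k$ are cyclically equivariant: shifting vertex indices by one translates $y$-indices by two and sends $F_{j,k}$ to $F_{j+2,k}$. Applying the shift by $n-i$ carries $S$ to $[1,n]_1 \setminus \{n\}$, so that $2S$ becomes $[2,2n-2]_2$, which is precisely the hypothesis of Proposition \ref{propFneq0Big}. Thus I may assume without loss of generality that $i=n$.

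Next, I would inspect the denominators in \eqref{Tkx} for $T^{n+1}$. Setting $k=n+1$, these are products of $F_{j',k'}$ with $k' \in \{n,n+1\}$ and $(j',k') \in \{(j-2,n),(j+1,n+1)\}$ when $j+n+1$ is even, or $(j',k') \in \{(j+1,n),(j-2,n+1)\}$ when $j+n+1$ is odd. A short parity check (using that $n$ is odd) shows $j'+k'$ is odd in all four subcases, so Proposition \ref{propFneq0Big} applies to each denominator factor.

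To finish, I would verify that at a generic $A \in X_S$ the genericity assumptions of Proposition \ref{propFneq0Big} hold automatically. After the cyclic shift, the one even-indexed $y$-variable not pinned to $-1$ is $y_{2i}$, which becomes $y_0$, and generically $y_{2i}(A) \neq -1$ and no $y_l(A)$ vanishes. Hence every denominator $F$-polynomial evaluates to a nonzero number at a generic $A$. The numerator factor $\prod y_{j+1+3i}$ in \eqref{Tkx} is a product of $y$'s that are either $-1$ or generically nonzero, and $x_{j\pm 3k}(A)$ is generically defined; so \eqref{Tkx} yields a well-defined value for $T^{n+1}(A)$ at generic $A \in X_S$.

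The genuinely new combinatorial content has already been absorbed into Proposition \ref{propFneq0Big}, so the principal obstacle here is only organizational: justifying the cyclic reduction and confirming the four parity subcases. No further delicate cancellation argument is needed, since in each subcase $j'+k'$ has the right parity to invoke Proposition \ref{propFneq0Big} directly rather than forcing us back into the more subtle situation where sign cancellations among alternating sign matrices could in principle kill an $F$-polynomial.
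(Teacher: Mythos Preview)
Your proposal is correct and follows essentially the same approach as the paper: reduce to $i=n$ by cyclic symmetry, then invoke Proposition~\ref{propFneq0Big} to show that the $F$-polynomials appearing in the denominators of \eqref{Tkx} for $k=n+1$ are nonzero at generic $A \in X_S$. You supply more explicit detail (the cyclic-shift justification, the four parity subcases, and the observation that the numerator factors cause no trouble), but the skeleton is identical to the paper's proof.
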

\begin{proof}
Suppose without loss of generality that $i=n$.  Then $y_2=y_4=\ldots=y_{2n-2} = -1$.  As always, we have $y_{j+2n}=y_j$ for all $j$ and $y_1y_2\cdots y_{2n}=1$.  So Proposition \ref{propFneq0Big} implies that, generically, $F_{j,k} \neq 0$ for $j+k$ odd and $k \in \{n,n+1\}$.  Therefore, $T^{n+1}$ is generically defined by \eqref{Tkx}.  
\end{proof}
\begin{rem}
In fact, Proposition \ref{propFneq0Big} says more, namely that the relevant $F$-polynomials never vanish unless $y_0=-1$ or some $y_i=0$.  The assumption that quadruples of consecutive vertices be in general position forces all of the $y_i$ to be nonzero.  As such, we have that the only singularities of $T^{n+1}$ on $X_S$ occur when $y_{2n}=y_0=-1$.  Hence, $T^{n+1}$ restricts to a regular map on $X_S \setminus X_{[1,n]_1}$.
\end{rem}

\begin{cor}
Suppose that $n$ is odd and that $S \subsetneq [1,n]_1$.  Then $T^{n+1}$ is nonsingular for generic $A \in X_S$.
\end{cor}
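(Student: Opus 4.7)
The plan is to deduce the corollary from Proposition~\ref{propSingConf} by a short monotonicity argument. The key observation is that the subvarieties $X_S$ are antitone in the index set: enlarging $S$ imposes more collinearity constraints and therefore cuts out a smaller locus.

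Since $S$ is a proper subset of $[1,n]_1$, I would choose some $i \in [1,n]_1 \setminus S$ and set $S' = [1,n]_1 \setminus \{i\}$. Then $S \subseteq S'$ yields $X_{S'} \subseteq X_S$. By \eqref{Tkx}, the map $T^{n+1}$ is singular at a point only when one of the finitely many relevant $F$-polynomials $F_{j,n}$ or $F_{j,n+1}$ (those with $j+n+1$ odd) appearing in the formula vanishes there. It therefore suffices to show that none of these $F$-polynomials vanishes identically on $X_S$.

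This is where Proposition~\ref{propSingConf} enters. Suppose for contradiction that some such $F_{j,k}$ vanishes identically on $X_S$. Then it also vanishes at every point of the smaller subvariety $X_{S'}$. But Proposition~\ref{propSingConf} asserts that $T^{n+1}$ is generically nonsingular on $X_{S'}$, which is precisely to say that no such $F$-polynomial vanishes identically on $X_{S'}$ --- a contradiction. Hence the common nonvanishing locus of the relevant $F_{j,k}$ is a dense open subset of $X_S$, on which \eqref{Tkx} defines $T^{n+1}(A)$.

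I anticipate no substantive obstacle; the whole argument is essentially a transfer of the generic nonvanishing result established in Proposition~\ref{propSingConf} from $X_{S'}$ to the containing locus $X_S$, using only the antitonicity of $S \mapsto X_S$.
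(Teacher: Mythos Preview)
Your argument is correct and follows essentially the same route as the paper: choose $S' = [1,n]_1 \setminus \{i\}$ with $S \subseteq S'$, use antitonicity $X_{S'} \subseteq X_S$, and invoke Proposition~\ref{propSingConf}. The only cosmetic difference is that the paper phrases the transfer as ``$T^{n+1}$ is defined at some point of $X_{S'} \subseteq X_S$, hence generically on $X_S$,'' whereas you route it through the identical vanishing of the $F$-polynomials; these amount to the same thing.
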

\begin{proof}
Since $S \neq [1,n]_1$, there exists some $S'$ such that $S \subseteq S' \subsetneq [1,n]_1$ and $|S'|=n-1$.  Since $S \subseteq S'$ we have $X_S \supseteq X_{S'}$.  Now $T^{n+1}$ is nonsingular at generic $A \in X_{S'}$ by Proposition \ref{propSingConf}.  In particular, the map is defined at some such $A$, which is necessarily also in $X_S$.  It follows that $T^{n+1}$ is defined generically on $X_S$.
\end{proof}

Of course for general $S$, it will usually be the case that $T^m$ is defined on $X_S$ for some $m < n+1$.  The corollary only ensures that $n+1$ steps will be sufficient.  This appears to also be true for $n$ even outside of some exceptional cases.  We state this as a conjecture.

\begin{conj} \  
\label{conjSingConf}
Suppose that $n$ is even.
\begin{itemize}
\item Singularity confinement holds generically on $X_S$ unless $[1,n-1]_2 \subseteq S$ or $[2,n]_2 \subseteq S$.
\item Whenever singularity confinement holds for a type, there exists an $m \leq n$ such that generic singularities of that type last $m$ steps (i.e. $T^m$ is singular but $T^{m+1}$ is not).
\end{itemize}
\end{conj}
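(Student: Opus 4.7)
The plan is to mirror the analysis of the odd case (Proposition~\ref{propSingConf} and Proposition~\ref{propFneq0Big}) while accounting for the parity complications. The central object remains the reduction $F_{j,k}|_{2S}$, viewed now as a polynomial in the remaining $y$-variables modulo the two structural relations $y_{i+2n} = y_i$ and $y_1 y_2 \cdots y_{2n} = 1$. By Corollary~\ref{corFeq0}, this reduction is a sum over those $A \in ASM(k)$ whose 1's avoid the entries of the array \eqref{iArray} lying in $2S$ (read modulo $2n$). Singularity confinement within $m+1$ steps on $X_S$ amounts to $F_{j,k}|_{2S} \not\equiv 0$ for $k \in \{m+1, m+2\}$ and the relevant parities of $j$.

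For the negative half of the first bullet, I would start with $S = [1,n-1]_2$ (the case $S = [2,n]_2$ being symmetric, and containments only shrink the singular locus). When $n$ is even, $2S$ modulo $2n$ is precisely the residue class $2 \pmod 4$ among even integers, so for every $k$ each row of the array \eqref{iArray} lies either entirely inside or entirely outside $2S$, since the rows are arithmetic progressions with common difference $4$. I would then verify that for each $k$ and each $j$ of the relevant parity, at least one row falls entirely inside $2S$, so every $A \in ASM(k)$ has an avoided $1$ on that row; as in Proposition~\ref{Feq0}, this forces $F_{j,k}|_{2S} \equiv 0$ for all $k$, and hence no iterate $T^k$ is regular on $X_S$.

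For the positive half (yielding both the quantitative bound in the second bullet and the positive direction of the first), I would adapt Proposition~\ref{propFneq0Big} to even $n$ with $k \in \{n, n+1\}$. The goal is to exhibit, for each non-exceptional $S$, a short explicit list of alternating sign matrices avoiding $(2S, j)$, to compute their contributions $\wt(A,j)\wt(B_0(A),j)$ as in Proposition~\ref{propFneq0Big}, and then to verify that the sum is nonzero after imposing $y_{i+2n} = y_i$ and $y_1 \cdots y_{2n} = 1$. The resulting expression should factor as a nonzero monomial times a product of factors of the form $(1 + y_r)$ for $r \notin 2S$, generically giving $F_{j,k}|_{2S} \neq 0$ and hence regularity of $T^{k+1}$ at step at most $n$.

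The main obstacle is the knight's-path argument of Lemma~\ref{lemKnight}. In the odd case the shift $\sigma$ vanishes and the knight's path closes into a single permutation matrix, giving Proposition~\ref{propFneq0Big}'s clean identity $F_{j,k} = M(1+y_0)^n$. When $n$ is even, $\sigma \in \{\pm 2\}$, and under the $2n$-periodic identification of indices one expects competing permutation matrices $A, A'$ whose weights must be combined using Lemma~\ref{lemBij2} together with the product relation. The combined weight can cancel, and one has to check that cancellation happens exactly at the conjectured exceptional $S$ and nowhere else. A further complication is that for non-arithmetic-progression types such as $S = \{3, 4, 7, 8\}$, multiple avoiding ASMs contribute simultaneously and their weights interact nontrivially under the cyclic relations. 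This parity-sensitive bookkeeping is what makes the even case genuinely harder than the odd case and is presumably why the statement appears as a conjecture rather than a theorem.
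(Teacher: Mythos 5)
The statement you are attempting to prove is Conjecture~\ref{conjSingConf}: the paper offers no proof, only a remark after it explaining geometrically why the exceptional types fail (if $[1,n-1]_2 \subseteq S$ and $n$ is even, half the vertices of $A \in X_S$ lie on a line, so $T(A)$ collapses onto that line). There is therefore no paper proof to compare against, and your submission should be evaluated as a proposed line of attack rather than a verified argument.

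Your outline does correctly identify the relevant machinery (Corollary~\ref{corFeq0}, Proposition~\ref{propFneq0Big}, Lemma~\ref{lemKnight}) and correctly isolates the obstruction: for even $k$ the shift $\sigma = \pm 2$ in Lemma~\ref{lemKnight} prevents the knight's path from closing, and under the cyclic identification $y_{i+2n} = y_i$ together with $y_1\cdots y_{2n}=1$ several alternating sign matrices may avoid $(2S,j)$ simultaneously, so that their combined weight can vanish. Your negative-direction sketch is plausibly sound: when $n$ is even, $2S \pmod{2n}$ for $S = [1,n-1]_2$ is exactly the set of even residues $\equiv 2 \pmod 4$; since $j+k$ is odd, one can choose a row index $i$ with $j + 2i - 3k - 3 \equiv 2 \pmod 4$, whence every $A \in ASM(k)$ has a $1$ in a row lying entirely in $2S$, giving $F_{j,k}|_{2S} \equiv 0$ for all $k$ by Corollary~\ref{corFeq0}; the containment and duality observations then cover all $S$ with $[1,n-1]_2 \subseteq S$ or $[2,n]_2 \subseteq S$. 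But the positive half, which is where the real content of the conjecture lies, is left as a program: you describe what should be computed (the competing ASM weights combined via Lemma~\ref{lemBij2}, then a verification that cancellation happens only at the exceptional types) but do not carry it out, and you acknowledge this yourself. The gap in your proposal is precisely the gap that makes the statement a conjecture rather than a theorem.
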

\begin{rem}
The cases where singularity confinement fails to hold are quite extreme.  If $S=[1,n]_1$ then $A \in X_S$ has all its vertices lying on two lines.  It follows that all the vertices of $T(A)$ are equal.  If $n$ is even and say $S$ contains $[1,n-1]_2$ then half the vertices of $A \in X_S$ are collinear and $T(A)$ will be contained in the common line.  Amazingly, if $n$ is even and $A \in X_S$ for $S=[1,n]_1$ then a finite number of iterations of $T^{-1}$ takes $A$ to $Y_S$ \cite[Theorem 3]{S}, \cite[Theorem 7.9]{G}.  Similar results likely hold for the other exceptional singularity types.
\end{rem}

\section{Straightedge constructions: a first attempt}
Let $A \in \mathcal{P}_n$ be a singular point of $T^k$ for $1 \leq k < m$ but not of $T^m$.  The remainder of this paper focuses on the problem of constructing $B=T^m(A)$.

One possible approach would be to compute the $x$-coordinates of $A$, plug into \eqref{Tkx} and \eqref{Fjk} to find the $x$-coordinates of $B$, and then use these to construct $B$ itself.  This process would be computationally expensive as the number of terms of $F_{j,k}$ grows superexponentially with $k$.  More to the point, this approach has the drawback that it ignores the geometry of the pentagram map.

As an alternative, we could choose a one-parameter family $A(t)$ of twisted polygons varying continuously with $t$ such that 
\begin{enumerate}
\item $A(0)=A$ and 
\item $A(t)$ is a regular point of $T^k$ for all $t\neq 0$ and $k \leq m$.
\end{enumerate}
For small $t \neq 0$, we can obtain $B(t) = T^m(A(t))$ by iterating the geometric construction defining $T$.  By continuity, $B$ is given by $\lim_{t\to 0} B(t)$ which can be found numerically.  This method is perhaps more feasible, but it involves a limiting procedure.  More satisfying would be a finite construction, preferably one which can be carried out with a straightedge alone, as is the case with the pentagram map.

In this section we introduce an iterative approach to finding such a straightedge construction, which works in simple situations.  The idea is to attempt to make sense of the polygon $T^k(A)$ for $k<m$ despite the presence of the singularity.  Let $A(t)$ be as above, and fixing $k<m$, let $C(t) = T^k(A(t))$.  For each appropriate index $i$, let
\begin{displaymath}
C_i = \lim_{t \to 0} (C_i(t))
\end{displaymath}
We say that $C_i$ is well-defined if this limit always exists and is independent of the choice of the curve $A(t)$ through $A$.  We can define sides $c_j$ of $T^k(A)$ in the same way.  In fact it is possible that each of the $C_i$ and $c_j$ are well-defined, despite the singularity.  This would simply indicate that the resulting polygon $C$ fails to satisfy the property that quadruples of consecutive vertices be in general position, which is needed for all the $x$-coordinates to be defined.

As before, suppose $A \in \mathcal{P}_n$ is a singular point of $T^k$ for $1 \leq k < m$ but not of $T^m$.  In addition, assume that all of the vertices and sides of $T^k(A)$ for $1\leq k<m$ are well-defined.  Then it should be possible to construct the components of these intermediate polygons successively.  Ideally, each individual side or vertex can be constructed by a simple procedure depending only on nearby objects.  

The most basic of these local rules is the usual definition of the pentagram map, namely, if $B=T(A)$ then
\begin{displaymath}
b_i = \join{A_{i-1}}{A_{i+1}}
\end{displaymath}
for each index $i$ of $A$ and
\begin{displaymath}
B_j = \meet{b_{j-\frac{1}{2}}}{b_{j+\frac{1}{2}}}
\end{displaymath}
for each index $j$ of $B$.
These rules only work when $A_{i-1} \neq A_{i+1}$ and $b_{j-\frac{1}{2}} \neq b_{j+\frac{1}{2}}$ respectively.  Other rules are needed to handle other cases.  The next simplest rule involves triple ratios which are a six point analogue of cross ratios.

\begin{defin}
Let $A,B,C,D,E,F$ be points in the plane with $A,B,C$ collinear, $C,D,E$ collinear, and $E,F,A$ collinear.  The \emph{triple ratio} of these points is defined to be
\begin{displaymath}
[A,B,C,D,E,F] = \frac{AB}{BC}\frac{CD}{DE}\frac{EF}{FA}
\end{displaymath}
where for instance, $\frac{AB}{BC}$ refers to the ratio of these two lengths, taken to be positive if $B$ lies between $A$ and $C$ and negative otherwise.
\end{defin}

We will need to following properties of triple ratios, which can be found for instance in \cite{R}.

\begin{prop} \ 
\begin{itemize}
\item Triple ratios are invariant under projective transformations.
\item (Ceva's theorem) If the lines $\join{A}{D}$, $\join{C}{F}$, and $\join{E}{B}$ are concurrent then 
\begin{displaymath}
[A,B,C,D,E,F] = 1
\end{displaymath}
\item (Menelaus' theorem) If $B$, $D$, and $F$, are collinear then 
\begin{displaymath}
[A,B,C,D,E,F] = -1
\end{displaymath}
\end{itemize}
\end{prop}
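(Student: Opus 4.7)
My plan is to prove the three properties separately, taking projective invariance first since it is the most delicate. Affine invariance of the triple ratio is immediate from the affine invariance of each signed length ratio, so the substantive work in (a) is confined to the non-affine part of $\operatorname{PGL}_3$; once it is established, (b) and (c) fall out of the same shared-base signed-area identity used for (a).

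For (a), I would show that each length ratio admits the representation
\begin{displaymath}
\frac{AB}{BC} = -\frac{[ABX]}{[CBX]}
\end{displaymath}
for any point $X$ not on $\join{A}{C}$, where $[XYZ]$ denotes the signed area of triangle $XYZ$; this holds because the two triangles share the base $BX$, and the signed perpendicular distances from $A$ and $C$ to $\join{B}{X}$ are proportional to $AB$ and $BC$ with opposite signs when $B$ lies between $A$ and $C$. Choosing $X=E$, $X=A$, and $X=C$ for the three factors and taking the cyclic product, the triple ratio becomes $-[ABE][CDA][EFC]/([CBE][EDA][AFC])$. Using the standard identification of a signed area with one-half of a $3 \times 3$ determinant in homogeneous coordinates (normalized so the third coordinate of each point is $1$), an arbitrary projective transformation $M$ scales each determinant by $\det M$ together with a factor $(MP)_3^{-1}$ for each vertex $P$ appearing in it. The six $\det M$ factors cancel between numerator and denominator, and a direct count shows that each of $A,B,C,D,E,F$ occurs the same number of times in the numerator as in the denominator, so the normalization factors cancel as well. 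This gives projective invariance.

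For Ceva's theorem, I would let $P$ denote the common point of $\join{A}{D}$, $\join{C}{F}$, $\join{E}{B}$ and apply the identity above with $X=P$ to each factor: since $B$ lies on $\join{E}{P}$, we obtain $AB/BC = -[AEP]/[CEP]$, and likewise for the other two ratios. A short sign-chase using the antisymmetry of the determinant collapses the cyclic product to $1$. For Menelaus, I would let $\ell$ be the transversal through $B, D, F$ and $d_A, d_C, d_E$ be the signed perpendicular distances from $A, C, E$ to $\ell$; similar right triangles give $AB/BC = -d_A/d_C$ and cyclically, so the cyclic product equals $-1$. The main obstacle throughout is tracking sign conventions: each area identity and each distance identity carries a sign depending on which side of an auxiliary line the relevant points lie, and these signs must combine correctly through the cyclic product for the results to emerge as $+1$ in (b) and $-1$ in (c) rather than merely as $\pm 1$. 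Once a uniform convention is fixed (say, the orientation built into $\det$ in a chosen homogeneous coordinate system), the three statements follow by straightforward bookkeeping.
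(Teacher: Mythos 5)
Your proof is correct and complete. Note, however, that the paper does not actually prove this proposition: it is stated immediately after the remark that these properties ``can be found for instance in \cite{R}'' (Richter-Gebert), so there is no in-paper argument to compare against. On its own merits your argument is sound. The key identity $\frac{AB}{BC} = -[ABX]/[CBX]$ for a free auxiliary vertex $X$ checks out, and passing to unnormalized $3\times 3$ determinants in homogeneous coordinates makes the projective-invariance bookkeeping transparent: the three $\det M$ factors cancel between numerator and denominator of $-\,[ABE][CDA][EFC]/([CBE][EDA][AFC])$, and each of $A,\dots,F$ appears exactly twice in the numerator and twice in the denominator, so the third-coordinate normalizations cancel as well. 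For Ceva, taking $X=P$ (the concurrency point) for all three factors and using the collinearity of each cevian triple to replace $[ABP]/[CBP]$ by $[AEP]/[CEP]$ (and cyclically) gives a product of three $(-1)$'s from the area identity and three more from determinant antisymmetry, hence $(-1)^6 = 1$; for Menelaus, the signed-distance telescoping $(-d_A/d_C)(-d_C/d_E)(-d_E/d_A) = -1$ is exactly right. The only stylistic quibble is that the opening reduction to the non-affine part of $\operatorname{PGL}_3$ is superfluous, since the determinant computation treats the whole projective group uniformly.
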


\begin{prop}
Suppose $A \in \mathcal{P}_n$ is a regular point of $T$ and $T^2$, and let $B=T(A)$, $C=T^2(A)$.  Then for all $i \in \mathbb{Z}$
\begin{displaymath}
[B_{i-\frac{3}{2}},B_{i-\frac{1}{2}},A_i,B_{i+\frac{3}{2}},B_{i+\frac{1}{2}},C_i] = -1
\end{displaymath}
\end{prop}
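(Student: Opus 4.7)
The plan is to reduce the claim to Menelaus' theorem by unpacking the definitions of $B$ and $C$. First, I would verify that the six points satisfy the three collinearity conditions needed for the triple ratio to be defined. Using the definition $B_j = \meet{\join{A_{j-\frac{3}{2}}}{A_{j+\frac{1}{2}}}}{\join{A_{j-\frac{1}{2}}}{A_{j+\frac{3}{2}}}}$, I observe that both $B_{i-\frac{3}{2}}$ and $B_{i-\frac{1}{2}}$ lie on the line $\join{A_{i-2}}{A_i}$ (taking the first factor in the definition of $B_{i-\frac{3}{2}}$ and the second factor in the definition of $B_{i-\frac{1}{2}}$), so $B_{i-\frac{3}{2}},B_{i-\frac{1}{2}},A_i$ are collinear. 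Symmetrically, $B_{i+\frac{1}{2}}$ and $B_{i+\frac{3}{2}}$ both lie on $\join{A_i}{A_{i+2}}$, giving collinearity of $A_i,B_{i+\frac{3}{2}},B_{i+\frac{1}{2}}$. Finally, the definition $C_i = \meet{\join{B_{i-\frac{3}{2}}}{B_{i+\frac{1}{2}}}}{\join{B_{i-\frac{1}{2}}}{B_{i+\frac{3}{2}}}}$ places $C_i$ on $\join{B_{i-\frac{3}{2}}}{B_{i+\frac{1}{2}}}$, so $B_{i+\frac{1}{2}},C_i,B_{i-\frac{3}{2}}$ are collinear as required.

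Next, to invoke Menelaus' theorem for the hexagon with vertices (in order) $B_{i-\frac{3}{2}},B_{i-\frac{1}{2}},A_i,B_{i+\frac{3}{2}},B_{i+\frac{1}{2}},C_i$, I need to check that the three ``alternate'' vertices $B_{i-\frac{1}{2}},B_{i+\frac{3}{2}},C_i$ are collinear. But this is the other half of the definition of $C_i$: it lies on $\join{B_{i-\frac{1}{2}}}{B_{i+\frac{3}{2}}}$ by construction. Menelaus' theorem then yields
\begin{displaymath}
[B_{i-\frac{3}{2}},B_{i-\frac{1}{2}},A_i,B_{i+\frac{3}{2}},B_{i+\frac{1}{2}},C_i] = -1,
\end{displaymath}
which is the desired identity.

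There is essentially no hard step here: the proof is a bookkeeping exercise showing that each of the four collinearities (three for the triple ratio to be defined, one to apply Menelaus) is immediate from the definition of the pentagram map. The only care needed is matching the indices so that the two factors in $B_j$ and in $C_i$ are correctly identified with the sides of the Menelaus hexagon. The regularity hypothesis on $A$ ensures that all of the intersections in the definitions of $B$ and $C$ are transverse, so the points involved are distinct and the triple ratio is well-defined.
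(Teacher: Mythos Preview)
Your proof is correct and follows exactly the paper's approach: verify the collinearities needed for the triple ratio and for Menelaus' theorem directly from the definitions of $B$ and $C$, then invoke Menelaus. (One harmless slip: in your parenthetical you have the two factors swapped---it is the \emph{second} factor of $B_{i-\frac{3}{2}}$ and the \emph{first} factor of $B_{i-\frac{1}{2}}$ that equal $\join{A_{i-2}}{A_i}$---but your conclusion is unaffected.)
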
 

\begin{proof}
That the triple ratio makes sense, and that it satisfies the condition of Menelaus' theorem, are both clear from Figure \ref{figDecVert}.
\end{proof}

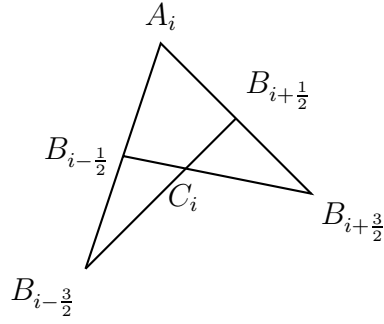
\begin{figure}
\begin{pspicture}(6,5)
\psline(1,1)(2,4)(4,2)(1.5,2.5)
\psline(1,1)(3,3)
\uput[dl](1,1){$B_{i-\frac{3}{2}}$}
\uput[l](1.5,2.5){$B_{i-\frac{1}{2}}$}
\uput[u](2,4){$A_i$}
\uput[dr](4,2){$B_{i+\frac{3}{2}}$}
\uput[ur](3,3){$B_{i+\frac{1}{2}}$}
\uput[d](2.3,2.3){$C_i$}
\end{pspicture}
\caption{By Menelaus' theorem, $[B_{i-\frac{3}{2}},B_{i-\frac{1}{2}},A_i,B_{i+\frac{3}{2}},B_{i+\frac{1}{2}},C_i] = -1$ for any index $i$ of $A$.}
\label{figDecVert}
\end{figure}

Now imagine continuously deforming the polygons until all six of these points are collinear.  This relation holds as the polygons are being deformed, so it continues to hold in the limit.  In particular, if five of these points are well-defined and collinear, then generically the sixth is also well-defined and is the unique point on the line for which the relation holds.  Stated as a rule, if $B=T(A)$, $C=T(B)$, and if $A_i,B_{i-\frac{3}{2}},B_{i-\frac{1}{2}},B_{i+\frac{1}{2}},B_{i+\frac{3}{2}}$ are collinear for some index $i$ of $A$ then
\begin{equation}
\label{ruleTripConj}
C_i = \textrm{\TripleConjugate}(B_{i-\frac{3}{2}},B_{i-\frac{1}{2}},A_i,B_{i+\frac{3}{2}},B_{i+\frac{1}{2}})
\end{equation}
Here, \TripleConjugate\ is a function that inputs five collinear points $P_1,P_2,P_3,P_4,P_5$ and outputs the unique point $P_6$ on the common line such that $[P_1,P_2,P_3,P_4,P_5,P_6]= -1$.

The ordered sextuple of sides $a_j,b_{j-\frac{3}{2}},b_{j+\frac{1}{2}},b_{j-\frac{1}{2}},b_{j+\frac{3}{2}},c_j$ satisfy the same incidences as the vertices $A_i,B_{i-\frac{3}{2}},B_{i-\frac{1}{2}},B_{i+\frac{1}{2}},B_{i+\frac{3}{2}},C_i$ (see Figure \ref{figDecSide}).  So we get the analogous rule, that if for some index $j$ of $B$ the sides $a_j,b_{j-\frac{3}{2}},b_{j+\frac{1}{2}},b_{j-\frac{1}{2}},b_{j+\frac{3}{2}}$ are all concurrent then
\begin{equation}
\label{ruleTripConjDual}
c_j = \textrm{\TripleConjugate}(b_{j-\frac{3}{2}},b_{j+\frac{1}{2}},a_j,b_{j+\frac{3}{2}},b_{j-\frac{1}{2}})
\end{equation}

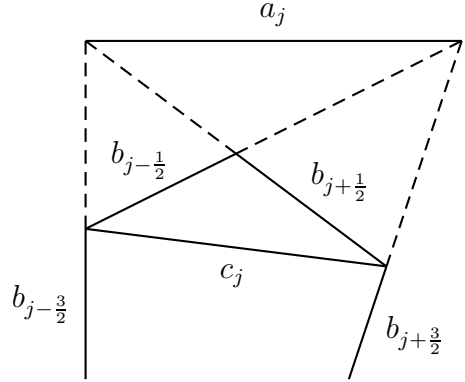
\begin{figure}
\begin{pspicture}(8,6)
\pnode(2,1){A}
\pnode(2,3){B}
\pnode(4,4){C}
\pnode(6,2.5){D}
\pnode(5.5,1){E}
\pnode(2,5.5){F}
\pnode(7,5.5){G}

\ncline{A}{B}
\naput{$b_{j-\frac{3}{2}}$}
\ncline{B}{C}
\naput[labelsep=2pt]{$b_{j-\frac{1}{2}}$}
\ncline{C}{D}
\naput[labelsep=2pt]{$b_{j+\frac{1}{2}}$}
\ncline{D}{E}
\naput{$b_{j+\frac{3}{2}}$}
\ncline{F}{G}
\naput{$a_{j}$}
\ncline{B}{D}
\nbput{$c_{j}$}
\ncline[linestyle=dashed]{F}{B}
\ncline[linestyle=dashed]{F}{C}
\ncline[linestyle=dashed]{G}{C}
\ncline[linestyle=dashed]{G}{D}
\end{pspicture}
\caption{A configuration of lines dual to the configuration of points in Figure \ref{figDecVert}}.
\label{figDecSide}
\end{figure}

These rules are already enough to handle singularities of the simplest type.  The full construction is described in the next subsection.  The following subsection explains the difficulty in handling more complicated singularities.

\subsection{The map $T^3: X_3 \to Y_3$}
The case $m=1$ of Theorem \ref{thmSingConf} says that $T^3$ restricts to a rational map from $X_i$ to $Y_i$.  Assume without loss of generality that $i=3$.  Throughout this subsection, assume $A \in X_3$ and let $B=T(A)$, $C=T^2(A)$, and $D=T^3(A)$.  Since $A \in X_3$ we have that $A_1,A_3,A_5$ are collinear, so let $l$ denote the line containing them.  In the following, $i$ and $j$ will denote elements of $\mathbb{Z}$ and $\frac{1}{2} + \mathbb{Z}$ respectively.

We start by constructing $B$, which can be done via the usual pentagram map.  So construct $b_i = \join{A_{i-1}}{A_{i+1}}$ for all $i$ and $B_j = \meet{b_{j-\frac{1}{2}}}{b_{j+\frac{1}{2}}}$ for all $j$.  Note that $b_2=b_4=l$.  Therefore, $B_{1.5},B_{2.5},B_{3.5},B_{4.5}$ all lie on $l$ and moreover 
\begin{displaymath}
B_{2.5} = B_{3.5} = \meet{l}{b_3}
\end{displaymath}
Let $P$ be this common point.  The construction of $B$ is shown in Figure \ref{figAB1}.

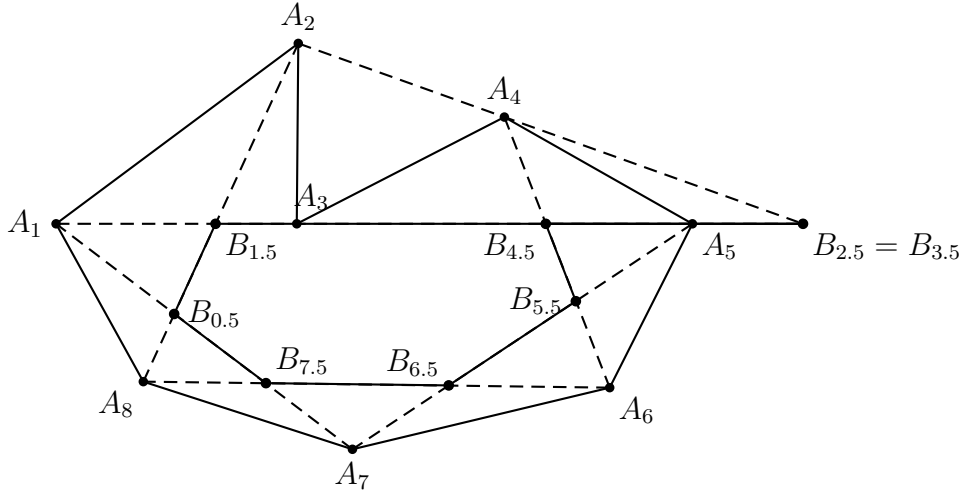
\begin{figure}
\begin{pspicture}(12,7)

\pnode(1,4){A1}
\pnode(4.22,6.4){A2}
\pnode(4.2,4){A3}
\pnode(6.96,5.42){A4}
\pnode(9.46,4){A5}
\pnode(8.36,1.82){A6}
\pnode(4.94,1){A7}
\pnode(2.16,1.9){A8}

\cnode*(2.57,2.8){2pt}{B05}
\cnode*(3.12,4){2pt}{B15}
\cnode*(10.93,4){2pt}{P}
\cnode*(7.51,4){2pt}{B45}
\cnode*(7.91,2.97){2pt}{B55}
\cnode*(6.22,1.85){2pt}{B65}
\cnode*(3.79,1.88){2pt}{B75}

\uput[l](A1){$A_1$}
\uput[u](A2){$A_2$}
\uput[65](A3){$A_3$}
\uput[u](A4){$A_4$}
\uput[dr](A5){$A_5$}
\uput[dr](A6){$A_6$}
\uput[d](A7){$A_7$}
\uput[dl](A8){$A_8$}

\uput[r](B05){$B_{0.5}$}
\uput[dr](B15){$B_{1.5}$}
\uput[dr](P){$B_{2.5}=B_{3.5}$}
\uput[dl](B45){$B_{4.5}$}
\uput[l](B55){$B_{5.5}$}
\uput[ul](B65){$B_{6.5}$}
\uput[ur](B75){$B_{7.5}$}

\pspolygon[showpoints=true](A1)(A2)(A3)(A4)(A5)(A6)(A7)(A8)
\ncline[linestyle=dashed]{A1}{P}
\ncline[linestyle=dashed]{A2}{P}
\ncline[linestyle=dashed]{A4}{A6}
\ncline[linestyle=dashed]{A5}{A7}
\ncline[linestyle=dashed]{A6}{A8}
\ncline[linestyle=dashed]{A7}{A1}
\ncline[linestyle=dashed]{A8}{A2}
\pspolygon[showpoints=true](B05)(B15)(P)(B45)(B55)(B65)(B75)
\end{pspicture}
\caption{The construction of $B=T(A)$ from $A$ for $A \in X_3$}
\label{figAB1}
\end{figure}

Generically, $B_{j-1} \neq B_{j+1}$ for all $j$, so the sides of $C$ can all be constructed as $c_j = \join{B_{j-1}}{B_{j+1}}$.  Note that $c_{2.5} = c_{3.5} = l$.  Hence, we cannot use $\meet{c_{2.5}}{c_{3.5}}$ to construct $C_3$.  However, $A_3, B_{1.5},B_{2.5},B_{3,5},B_{4.5}$ all lie on $l$ so the rule \eqref{ruleTripConj} applies:
\begin{displaymath}
C_3 = \textrm{\TripleConjugate}(B_{1.5},B_{2.5},A_3,B_{4.5},B_{3.5})
\end{displaymath}
As usual, $C_i = \join{c_{i-\frac{1}{2}}}{c_{i+\frac{1}{2}}}$ for all $i \neq 3$.  In particular we have
\begin{align*}
C_2 &= \meet{c_{1.5}}{c_{2.5}} = \meet{\join{B_{0.5}}{B_{2.5}}}{l} = B_{2.5} = P \\
C_4 &= \meet{c_{3.5}}{c_{4.5}} = \meet{l}{\join{B_{3.5}}{B_{5.5}}} = B_{3.5} = P 
\end{align*}
Figure \ref{figBC1} shows the construction of $C$ from $B$.

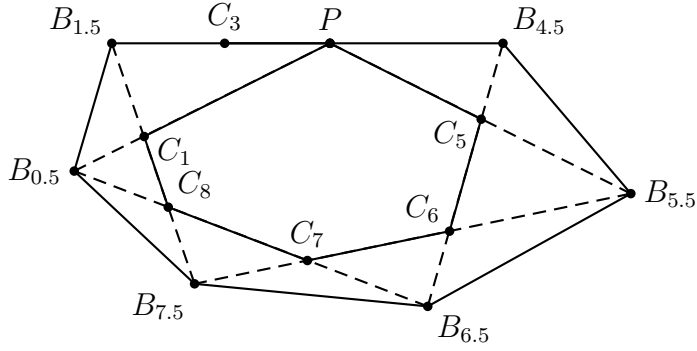
\begin{figure}
\begin{pspicture}(10,5)
\pnode(1,2.3){B05}
\pnode(1.5,4){B15}
\pnode(4.4,4){P}
\pnode(6.7,4){B45}
\pnode(8.4,2){B55}
\pnode(5.7,.5){B65}
\pnode(2.6,.8){B75}

\pnode(1.93,2.76){C1}
\pnode(3,4){C3}
\pnode(6.41,2.99){C5}
\pnode(5.99,1.5){C6}
\pnode(4.1,1.11){C7}
\pnode(2.25,1.82){C8}

\uput[l](B05){$B_{0.5}$}
\uput[ul](B15){$B_{1.5}$}
\uput[u](P){$P$}
\uput[ur](B45){$B_{4.5}$}
\uput[r](B55){$B_{5.5}$}
\uput[dr](B65){$B_{6.5}$}
\uput[dl](B75){$B_{7.5}$}

\uput[330](C1){$C_1$}
\uput[u](C3){$C_3$}
\uput[210](C5){$C_5$}
\uput[ul](C6){$C_6$}
\uput[u](C7){$C_7$}
\uput[ur](C8){$C_8$}

\pspolygon[showpoints=true](B05)(B15)(P)(B45)(B55)(B65)(B75)
\ncline[linestyle=dashed]{B05}{P}
\ncline[linestyle=dashed]{P}{B55}
\ncline[linestyle=dashed]{B45}{B65}
\ncline[linestyle=dashed]{B55}{B75}
\ncline[linestyle=dashed]{B65}{B05}
\ncline[linestyle=dashed]{B75}{B15}
\pspolygon[showpoints=true](C1)(P)(C5)(C6)(C7)(C8)
\psline[showpoints=true](P)(C3)
\end{pspicture}
\caption{The construction of $C=T^2(A)$ from $B=T(A)$ for $A \in X_3$.  Here, $P=B_{2.5}=B_{3.5}=C_2=C_4$.}
\label{figBC1}
\end{figure}

The last difficulty is in constructing the side $d_3$ since $C_2 = C_4$.  However, $b_3,c_{1.5},c_{2.5},c_{3.5},c_{4.5}$ all pass through $P$ so by \eqref{ruleTripConjDual}
\begin{displaymath}
d_3 = \textrm{\TripleConjugate}(c_{1.5},c_{3.5},b_3,c_{4.5},c_{2.5})
\end{displaymath}
In particular, $d_3$ contains $P$.  Letting $d_i = \meet{C_{i-1}}{C_{i+1}}$ for all $i \neq 3$, we have that $d_1$ contains $C_2=P$ and $d_5$ contains $C_4 = P$ as well.  This verifies that $D \in Y_3$.  Finally, the vertices of $D$ are constructed as $D_j = \meet{d_{j-\frac{1}{2}}}{d_{j+\frac{1}{2}}}$ for all $j$.  The construction of $D$ from $C$ is given in Figure \ref{figCD1}.  The full construction of $D$ from $A$ is summarized in Algorithm \ref{algT3}.

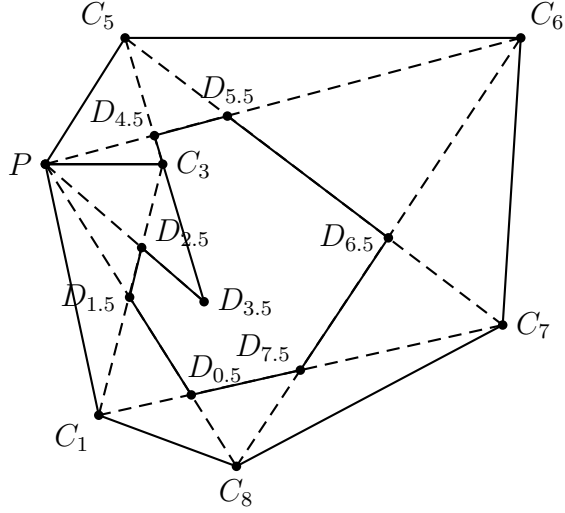
\begin{figure}
\begin{pspicture}(8,7)
\pnode(1.71,1.16){C1}
\pnode(1,4.5){P}
\pnode(2.56,4.5){C3}
\pnode(2.06,6.18){C5}
\pnode(7.32,6.18){C6}
\pnode(7.08,2.36){C7}
\pnode(3.54,.48){C8}

\pnode(2.94,1.43){D05}
\pnode(2.12,2.73){D15}
\pnode(2.28,3.39){D25}
\pnode(3.11,2.67){D35}
\pnode(2.45,4.88){D45}
\pnode(3.42,5.14){D55}
\pnode(5.56,3.52){D65}
\pnode(4.39,1.76){D75}

\uput[dl](C1){$C_1$}
\uput[l](P){$P$}
\uput[r](C3){$C_3$}
\uput[ul](C5){$C_5$}
\uput[ur](C6){$C_6$}
\uput[r](C7){$C_7$}
\uput[d](C8){$C_8$}

\uput[60](D05){$D_{0.5}$}
\uput[l](D15){$D_{1.5}$}
\uput[30](D25){$D_{2.5}$}
\uput[r](D35){$D_{3.5}$}
\uput[ul](D45){$D_{4.5}$}
\uput[u](D55){$D_{5.5}$}
\uput[l](D65){$D_{6.5}$}
\uput[ul](D75){$D_{7.5}$}

\pspolygon[showpoints=true](C1)(P)(C3)(P)(C5)(C6)(C7)(C8)
\ncline[linestyle=dashed]{C1}{C3}
\ncline[linestyle=dashed]{D35}{P}
\ncline[linestyle=dashed]{C3}{C5}
\ncline[linestyle=dashed]{P}{C6}
\ncline[linestyle=dashed]{C5}{C7}
\ncline[linestyle=dashed]{C6}{C8}
\ncline[linestyle=dashed]{C7}{C1}
\ncline[linestyle=dashed]{C8}{P}
\pspolygon[showpoints=true](D05)(D15)(D25)(D35)(D45)(D55)(D65)(D75)
\end{pspicture}
\caption{The construction of $D=T^3(A)$ from $C=T^2(A)$ for $A \in X_3$.  Here, $P=C_2=C_4$.}
\label{figCD1}
\end{figure}

\begin{algorithm}
\caption{$T^3(A)$}
\begin{algorithmic}
\REQUIRE $A_1,A_3,A_5$ collinear
\FORALL{$i$}
\STATE $b_i := \join{A_{i-1}}{A_{i+1}}$  
\ENDFOR
\FORALL{$j$}
\STATE $B_j := \meet{b_{j-\frac{1}{2}}}{b_{j+\frac{1}{2}}}$ 
\ENDFOR
\FORALL{$j$}
\STATE $c_j := \join{B_{j-1}}{B_{j+1}}$ 
\ENDFOR
\FORALL{$i \neq 3$}
\STATE $C_i := \meet{c_{i-\frac{1}{2}}}{c_{i+\frac{1}{2}}}$ 
\ENDFOR
\STATE $C_3 := \textrm{\TripleConjugate}(B_{1.5},B_{2.5},A_3,B_{4.5},B_{3.5})$
\FORALL{$i \neq 3$}
\STATE $d_i := \join{C_{i-1}}{C_{i+1}}$ 
\ENDFOR
\STATE $d_3 := \textrm{\TripleConjugate}(c_{1.5},c_{3.5},b_3,c_{4.5},c_{2.5})$
\FORALL{$j$}
\STATE $D_j := \meet{d_{j-\frac{1}{2}}}{d_{j+\frac{1}{2}}}$ 
\ENDFOR
\RETURN $D$
\end{algorithmic}
\label{algT3}
\end{algorithm}

\begin{rem}
All algorithms in this paper can be carried out as straightedge constructions.  When this is not completely apparent from the algorithm itself, more details are provided in the surrounding text and/or in Appendix A.  For example, Algorithm \ref{algT3} uses the function \TripleConjugate, a construction for which is given in Algorithm \ref{algTripConj} of the Appendix.  In addition to the usual operations of projective geometry (finding a line through two points or the intersection point of two lines) we assume as a primitive a function \RandomPoint() which returns the next in an arbitrarily long sequence $P_1,P_2,\ldots$ of points in the plane.  All algorithms are only claimed to behave correctly for generic choices of these points.  For convenience, define
\begin{itemize}
\item \RandomLine()  :=  $\join{\textrm{ \RandomPoint()}}{\textrm{\RandomPoint() }}$  
\item \RandomPointOn($l$)  :=  $\meet{\textrm{\RandomLine()}}{l}$
\item \RandomLineThrough($P$)  :=  $\join{\textrm{ \RandomPoint()}}{P}$
\end{itemize}
\end{rem}

\subsection{The map $T^4: X_{\{3,5\}} \to Y_{\{3.5,4.5\}}$} \label{secStraightEdge2}
The next simplest case, $m=2$, of Theorem~\ref{thmSingConf} concerns a singularity which disappears after four steps.  Specifically, taking $i=4$ there is a map $T^4: X_{\{3,5\}} \to Y_{\{3.5,4.5\}}$.  Suppose $A \in X_{\{3,5\}}$ which means that $A_1$, $A_3$, $A_5$, and $A_7$ are collinear.  Let $l$ be their common line.  Then $E = T^4(A) \in Y_{\{3.5,4.5\}}$, i.e., $e_{1.5},e_{3.5},e_{5.5}$ are concurrent and $e_{2.5},e_{4.5},e_{6.5}$ are also concurrent.  As before, we will attempt to successively construct the intermediate polygons, namely $B=T(A)$, $C=T^2(A)$, and $D=T^3(A)$.  However, in this case not all of these polygons will be completely well-defined.  Again, let $i$ and $j$ range over $\mathbb{Z}$ and $\frac{1}{2} + \mathbb{Z}$ respectively.  The constructions that follow are illustrated in Figures \ref{figAB2}--\ref{figDE2}. 

\begin{figure}
\begin{pspicture}(9,6)
\pnode(0.88, 3.5){A1}
\pnode(2.3, 5.16){A2}
\pnode(2.88, 3.5){A3}
\pnode(4.38, 2.74){A4}
\pnode(6.74, 3.5){A5}
\pnode(8.46, 5.06){A6}
\pnode(8.7, 3.5){A7}
\pnode(5.74, 0.66){A8}
\pnode(2.34, 1.1){A9}

\pnode(2.32, 2.66){B05}
\pnode(2.32, 3.5){B15}
\pnode(3.73, 3.5){P}
\pnode(5.72, 3.5){Q}
\pnode(7.5, 3.5){B65}
\pnode(7.13, 2.91){B75}
\pnode(3.95, 1.71){B85}

\uput[l](A1){$A_1$}
\uput[u](A2){$A_2$}
\uput[d](A3){$A_3$}
\uput[d](A4){$A_4$}
\uput{2pt}[120](A5){$A_5$}
\uput[u](A6){$A_6$}
\uput[r](A7){$A_7$}
\uput[dr](A8){$A_8$}
\uput[dl](A9){$A_9$}

\uput{2pt}[dl](B05){$B_{0.5}$}
\uput[ul](B15){$B_{1.5}$}
\uput[ur](P){$P$}
\uput[ul](Q){$Q$}
\uput[30](B65){$B_{6.5}$}
\uput{2pt}[ul](B75){$B_{7.5}$}
\uput[d](B85){$B_{8.5}$}

\pspolygon[showpoints=true](A1)(A2)(A3)(A4)(A5)(A6)(A7)(A8)(A9)
\ncline[linestyle=dashed]{A1}{A7}
\ncline[linestyle=dashed]{A2}{A4}
\ncline[linestyle=dashed]{A4}{A6}
\ncline[linestyle=dashed]{A6}{A8}
\ncline[linestyle=dashed]{A7}{A9}
\ncline[linestyle=dashed]{A8}{A1}
\ncline[linestyle=dashed]{A9}{A2}
\pspolygon[showpoints=true](B05)(B15)(P)(Q)(B65)(B75)(B85)

\end{pspicture}
\caption{The construction of $B=T(A)$ from $A$ for $A \in X_{\{3,5\}}$.  Here, $P=B_{2.5}=B_{3.5}$ and $Q=B_{4.5}=B_{5.5}$.}
\label{figAB2}
\end{figure}

\begin{figure}
\begin{pspicture}(8,5)
\pnode(0.54,2.22){B05}
\pnode(1.2, 4.5){B15}
\pnode(2.68,4.5){P}
\pnode(6.96,4.5){Q}
\pnode(8.56, 4.5){B65}
\pnode(8.32, 1.58){B75}
\pnode(3.52, 0.44){B85}

\pnode(1.76, 3.52){C1}
\pnode(3.7, 4.5){C3}
\pnode(4.68, 4.5){C4}
\pnode(6.04, 4.5){C5}
\pnode(7.4, 3.56){C7}
\pnode(5.25, 1.83){C8}
\pnode(2.6, 2.05){C9}

\uput[dl](B05){$B_{0.5}$}
\uput[ul](B15){$B_{1.5}$}
\uput[u](P){$P$}
\uput[u](Q){$Q$}
\uput[ur](B65){$B_{6.5}$}
\uput[dr](B75){$B_{7.5}$}
\uput[d](B85){$B_{8.5}$}

\uput[r](C1){$C_1$}
\uput[d](C3){$C_3$}
\uput[d](C4){$C_4$}
\uput[d](C5){$C_5$}
\uput[l](C7){$C_7$}
\uput[ul](C8){$C_8$}
\uput[ur](C9){$C_9$}

\pspolygon[showpoints=true](B05)(B15)(P)(Q)(B65)(B75)(B85)
\ncline[linestyle=dashed]{B05}{P}
\ncline[linestyle=dashed]{Q}{B75}
\ncline[linestyle=dashed]{B65}{B85}
\ncline[linestyle=dashed]{B75}{B05}
\ncline[linestyle=dashed]{B85}{B15}
\pspolygon[showpoints=true](C1)(P)(C3)(C4)(C5)(Q)(C7)(C8)(C9)

\end{pspicture}
\caption{The construction of $C=T^2(A)$ from $B=T(A)$ for $A \in X_{\{3,5\}}$.  Here, $P=B_{2.5}=B_{3.5}=C_2$ and $Q=B_{4.5}=B_{5.5}=C_6$.}
\label{figBC2}
\end{figure}

\begin{figure}
\begin{pspicture}(9,5)
\pnode(0.66, 2.32){C1}
\pnode(1.28, 4.2){C2}
\pnode(2.56, 4.2){R}
\pnode(3.94, 4.2){C4}
\pnode(5.42, 4.2){S}
\pnode(7.92, 4.2){C6}
\pnode(7.16, 1.78){C7}
\pnode(4.78, 0.62){C8}
\pnode(2.34, 0.7){C9}

\pnode(2.02, 1.76){D05}
\pnode(1.58, 3.23){D15}
\pnode(6.55, 2.63){D65}
\pnode(5.46, 1.4){D75}
\pnode(3.8, 1.03){D85}

\uput[l](C1){$C_1$}
\uput[ul](C2){$C_2$}
\uput[u](R){$R$}
\uput[u](C4){$C_4$}
\uput[u](S){$S$}
\uput[ur](C6){$C_6$}
\uput[dr](C7){$C_7$}
\uput[d](C8){$C_8$}
\uput[d](C9){$C_9$}

\uput[ur](D05){$D_{0.5}$}
\uput[r](D15){$D_{1.5}$}
\uput[l](D65){$D_{6.5}$}
\uput[ul](D75){$D_{7.5}$}
\uput[u](D85){$D_{8.5}$}

\pspolygon[showpoints=true](C1)(C2)(R)(C4)(S)(C6)(C7)(C8)(C9)
\ncline[linestyle=dashed]{C1}{R}
\ncline[linestyle=dashed]{S}{C7}
\ncline[linestyle=dashed]{C6}{C8}
\ncline[linestyle=dashed]{C7}{C9}
\ncline[linestyle=dashed]{C8}{C1}
\ncline[linestyle=dashed]{C9}{C2}
\pspolygon[showpoints=true](D05)(D15)(R)(S)(D65)(D75)(D85)
\end{pspicture}
\caption{The construction of $D=T^3(A)$ from $C=T^2(A)$ for $A \in X_{\{3,5\}}$.  Here, $R=C_3=D_{2.5}=D_{4.5}$ and $S=C_5=D_{3.5}=D_{5.5}$.}
\label{figCD2}
\end{figure}

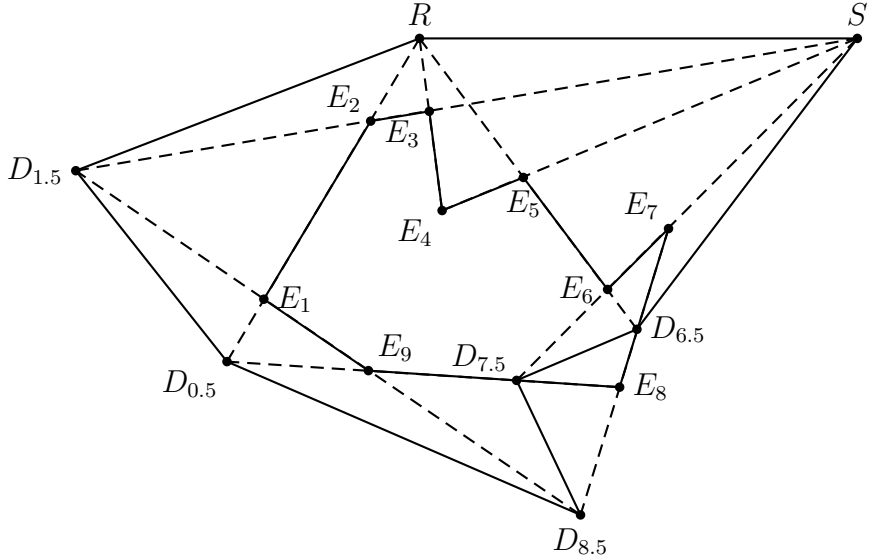
\begin{figure}
\begin{pspicture}(11,8)
\pnode(2.56, 2.7){D05}
\pnode(0.55, 5.24){D15}
\pnode(5.12,7){R}
\pnode(10.94,7){S}
\pnode(8.01, 3.13){D65}
\pnode(6.41, 2.45){D75}
\pnode(7.26, 0.66){D85}

\pnode(3.05, 3.53){E1}
\pnode(4.47, 5.9){E2}
\pnode(5.25, 6.03){E3}
\pnode(5.42, 4.71){E4}
\pnode(6.5, 5.15){E5}
\pnode(7.62, 3.66){E6}
\pnode(8.43, 4.47){E7}
\pnode(7.78, 2.36){E8}
\pnode(4.44, 2.58){E9}

\uput[dl](D05){$D_{0.5}$}
\uput[l](D15){$D_{1.5}$}
\uput[u](R){$R$}
\uput[u](S){$S$}
\uput[r](D65){$D_{6.5}$}
\uput[ul](D75){$D_{7.5}$}
\uput[d](D85){$D_{8.5}$}

\uput[r](E1){$E_1$}
\uput[ul](E2){$E_2$}
\uput[dl](E3){$E_3$}
\uput[dl](E4){$E_4$}
\uput[d](E5){$E_5$}
\uput[l](E6){$E_6$}
\uput[ul](E7){$E_7$}
\uput[r](E8){$E_8$}
\uput[ur](E9){$E_9$}

\pspolygon[showpoints=true](D05)(D15)(R)(S)(D65)(D75)(D85)
\ncline[linestyle=dashed]{D85}{D15}
\ncline[linestyle=dashed]{D05}{R}
\ncline[linestyle=dashed]{D15}{S}
\ncline[linestyle=dashed]{R}{E4}
\ncline[linestyle=dashed]{S}{E4}
\ncline[linestyle=dashed]{R}{D65}
\ncline[linestyle=dashed]{S}{D75}
\ncline[linestyle=dashed]{E7}{D85}
\ncline[linestyle=dashed]{E8}{D05}
\pspolygon[showpoints=true](E1)(E2)(E3)(E4)(E5)(E6)(E7)(E8)(E9)
\end{pspicture}
\caption{The construction of $E=T^4(A)$ from $D=T^3(A)$ for $A \in X_{\{3,5\}}$.  Here, $R=D_{2.5}=D_{4.5}$ and $S=D_{3.5}=D_{5.5}$.}
\label{figDE2}
\end{figure}

As before, $B$ can be constructed using the standard pentagram map.  In this case, it will have three sides equal to $l$, namely $b_2$, $b_4$, and $b_6$.  As a result, the six consecutive vertices $B_{1.5},B_{2.5},\ldots,B_{6.5}$ will all lie on $l$.  Moreover, we have
\begin{align*}
B_{2.5} &= B_{3.5} = \meet{l}{b_3} \\
B_{4.5} &= B_{5.5} = \meet{l}{b_5}
\end{align*}

The sides $c_j = \join{B_{j-1}}{B_{j+1}}$ of $C$ are all defined, although $c_{2.5},c_{3.5},c_{4.5},c_{5.5}$ all equal $l$.  So the first problems arise in constructing the vertices $C_3$, $C_4$, and $C_5$.  In general, $C_3$ only depends on vertices $A_0$ through $A_6$ of the original polygon, so the assumption $A_7 \in l$ is irrelevant for its construction.  This puts us in the context of the previous subsection, so as there we have
\begin{displaymath}
C_3 = \textrm{\TripleConjugate}(B_{1.5},B_{2.5},A_3,B_{4.5},B_{3.5})
\end{displaymath}
By symmetry there is a similar construction of $C_5$:
\begin{displaymath}
C_5 = \textrm{\TripleConjugate}(B_{3.5},B_{4.5},A_5,B_{6.5},B_{5.5})
\end{displaymath}

The situation with $C_4$ is more complicated.  Ordinarily, we would use the fact 
\begin{displaymath}
[B_{2.5},B_{3.5},A_4,B_{5.5},B_{4.5},C_4] = -1
\end{displaymath}
and solve for $C_4$.  However, $B_{2.5}=B_{3.5}$ and $B_{5.5}=B_{4.5}$ so the triple ratio comes out to $0/0$.  As such, we can not use this method to construct $C_4$.  In fact, it turns out that $C_4$ is simply not well-defined.  

The fact that an intermediate vertex is not well-defined causes great difficulty in the current approach to devising straightedge constructions.  In the following sections, we demonstrate how enriching the input $A$ with first-order data counteracts this difficulty and leads to a general algorithm.  Before moving on, we finish describing a construction particular to the present context which works around the matter of $C_4$.

Recall that sides $c_{2.5}$ through $c_{5.5}$ of $C$ all equal $l$, so its vertices $C_2$ through $C_6$ all lie on this line.  As such, we know $d_3 = \join{C_2}{C_4}=l$ and $d_5 = \join{C_4}{C_6} = l$ even though $C_4$ is itself not well-defined.  The rest of the sides of $D$ are constructed similarly, and of note $d_4$ also equals $l$.  The other sides are all generic so we can construct $D_j = \meet{d_{j-\frac{1}{2}}}{d_{j+\frac{1}{2}}}$ for all $j$ other than $3.5$ and $4.5$.  For these two vertices, we work backwards.  We know ultimately that $e_{1.5},e_{3.5},e_{5.5}$ will be concurrent.  But $\meet{e_{1.5}}{e_{3.5}} = D_{2.5}$ and  $\meet{e_{3.5}}{e_{5.5}} = D_{4.5}$ so $D_{4.5} = D_{2.5}$.  Similarly, the fact that $e_{2.5},e_{4.5},e_{6.5}$ are concurrent implies that $D_{3.5} = D_{5.5}$.

For the final step, let $e_j = \join{D_{j-1}}{D_{j+1}}$ for all $j$ besides $3.5$ and $4.5$.  The usual construction fails for $e_{3.5}$ because $D_{2.5}=D_{4.5}$ and the construction involving triple conjugates also fails because $d_3=d_4=d_5$.  However, $e_{3.5}$ certainly is well-defined as it is a side of $E = T^4(A) \in \mathcal{P}_n$.  Through trial and error we discovered the following construction for $e_{3.5}$, and by symmetry, one for $e_{4.5}$.

\begin{prop}
\label{prope35}
Under the assumptions of this subsection
\begin{align*}
e_{3.5} &= \join{(\meet{\join{(\meet{b_5}{d_2})}{C_5}}{c_{6.5}})}{C_3} \\
e_{4.5} &= \join{(\meet{\join{(\meet{b_3}{d_6})}{C_3}}{c_{1.5}})}{C_5}
\end{align*}
\end{prop}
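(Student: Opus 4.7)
The plan is to verify the formula for $e_{3.5}$ by exhibiting two points on this line that match the endpoints of the claimed expression; the formula for $e_{4.5}$ will then follow from an analogous argument that interchanges the two collinear triples $\{A_1,A_3,A_5\}$ and $\{A_3,A_5,A_7\}$. The natural two points are $C_3$ itself and the constructed point $X=\meet{\join{(\meet{b_5}{d_2})}{C_5}}{c_{6.5}}$.

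The first point is the easier one. Since $d_2=\join{C_1}{C_3}$ passes through $C_3$ and $d_3=l$ also contains $C_3$, we have $D_{2.5}=\meet{d_2}{d_3}=C_3$. The backwards constraint $E\in Y_{\{3.5,4.5\}}$ (forced by Theorem \ref{thmSingConf}) gives $D_{4.5}=D_{2.5}=C_3$ as well. For any generic one-parameter deformation $A(t)\to A$, the line $e_{3.5}(t)=\join{D_{2.5}(t)}{D_{4.5}(t)}$ passes through two points each tending to $C_3$, so $C_3\in e_{3.5}$ in the limit.

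For the second point, the objects $b_5$, $d_2$, $C_5$, $c_{6.5}$ are all well-defined at $A$ and continuous in a neighborhood, so $X$ itself is continuous in $A$. To show $X\in e_{3.5}$, I would pick a generic deformation $A(t)\to A$, track the (well-defined) intersection $\meet{e_{3.5}(t)}{c_{6.5}(t)}$, and show its limit is $X$. The content of this step is to expand $D_{2.5}(t)$ and $D_{4.5}(t)$, and hence the line $e_{3.5}(t)$, to first order in $t$, and verify that this intersection converges to the point prescribed by the formula.

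The main obstacle is this first-order limit computation. Both $D_{2.5}(t)$ and $D_{4.5}(t)$ collapse to $C_3$, so the direction of $e_{3.5}(t)$ is determined by the tangent vectors of these two perturbations; no zeroth-order data suffices. I expect the point $Y=\meet{b_5}{d_2}$ to encode the common infinitesimal displacement and $C_5$ to act as a projective center transferring this infinitesimal direction to the auxiliary line $c_{6.5}$. Carrying this out in projective coordinates, with $A(t)$ parameterized by moving a single off-$l$ vertex linearly in $t$, should yield a direct if tedious verification; alternatively, one could substitute into the explicit formulas \eqref{Tkx} and \eqref{Fjk} for $T^k$ and check the resulting rational identity in the $x$-coordinates subject to $x_6 x_7 = x_{10}x_{11}=1$. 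The identity for $e_{4.5}$ then follows from the symmetry reversing the cyclic indexing, which swaps $b_3\leftrightarrow b_5$, $d_2\leftrightarrow d_6$, $C_3\leftrightarrow C_5$, $c_{1.5}\leftrightarrow c_{6.5}$, and exchanges $e_{3.5}$ with $e_{4.5}$.
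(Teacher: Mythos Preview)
The paper does not actually prove this proposition: immediately after stating it, Remark~\ref{remNoProof} explains that results of this kind ``can be proven computationally'' and that the computations ``are generally quite tedious, so we will tend to omit them.'' So there is no argument in the paper to compare against beyond the assertion that a direct symbolic check works.

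Your outline is therefore already more structured than what the paper offers. The reduction to two points is sound, and your verification that $C_3\in e_{3.5}$ is correct: $D_{2.5}=\meet{d_2}{d_3}=C_3$ directly, $D_{4.5}=D_{2.5}$ by the concurrence forced by $E\in Y_{\{3.5,4.5\}}$, and since $e_{3.5}$ is the well-defined limit of $\join{D_{2.5}(t)}{D_{4.5}(t)}$ with both endpoints tending to $C_3$, the limit line contains $C_3$. The symmetry $i\mapsto 8-i$ does exchange the two formulas as you claim, so the second identity follows from the first.

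What remains is exactly the piece you flag as the obstacle: showing the auxiliary point $X=\meet{\join{(\meet{b_5}{d_2})}{C_5}}{c_{6.5}}$ lies on $e_{3.5}$. You describe the shape of the first-order limit computation (or the alternative of substituting into \eqref{Tkx}--\eqref{Fjk}) but do not carry it out. That is not a flaw in your strategy so much as the same omission the paper makes; if you want a complete proof, the coordinate computation along a one-parameter family is the expected route, and there is no shortcut offered in the paper.
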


\begin{rem}
\label{remNoProof}
In principle, results like Proposition \ref{prope35} can be proven computationally.  In instances for which we are unaware of a more illuminating proof, we will simply fall back on this sort of reasoning.  The computations required are generally quite tedious, so we will tend to omit them.
\end{rem}

\section{Decorated polygons}
Let $A$ be a twisted polygon which is a singular point of $T^k$.  As explained in the previous section, we can attempt to define $T^k(A)$ as a limit of $T^k(A(t))$ where $A(t)$ is a curve in the space of polygons passing through $A=A(0)$.  As we saw in Section \ref{secStraightEdge2}, the result sometimes depends on the choice of the curve.  This suggests a different approach to constructing the first nonsingular iterate $T^m(A)$.  Start by fixing arbitrarily the one-parameter family $A(t)$.  With respect to this choice the intermediate polygons $T^k(A)$ are well-defined.  Constructing them in turn we eventually get $T^m(A)$.  Since $A$ is not a singular point of $T^m$, the final result will not depend on the choice of $A(t)$.

Working with actual curves would be difficult.  However, all that will actually matter will be the first order behavior of the curve near $t=0$.  This information can be encoded using geometric data which we call decorations.

Let $A$ be a point in the projective plane, and let $\gamma$ be a smooth curve with $\gamma(0) = A$.  Define the associated \emph{decoration} of $A$, denoted $A^*$, to be the tangent line of $\gamma$ at $A$:
\begin{displaymath}
A^* = \lim_{t \to 0} \join{A}{\gamma(t)}
\end{displaymath}
When defined, $A^*$ is a line passing through $A$.

By the same token, if $a$ is a line in the projective plane then $a$ can be thought of as a point in the dual plane.  Given a curve $\gamma$ through that point we can define the \emph{decoration} $a^*$ as
\begin{displaymath}
a^* = \lim_{t \to 0} \meet{a}{\gamma(t)}
\end{displaymath}
When defined, $a^*$ is a point lying on $a$.

Finally, let $A$ be a twisted polygon and $\gamma$ a curve in the space of twisted polygons with $\gamma(0)=A$.  Then $\gamma$ determines a curve in the plane through each vertex of $A$ and a curve in the dual plane through each side of $A$.  By the above, we can define decorations on each of these individual objects.

\begin{defin}
\label{defDecPoly}
A \emph{decorated polygon} is a twisted polygon $A$ together with the decorations of each of its vertices and sides induced by some curve $\gamma$ in the space of twisted polygons with $\gamma(0)=A$.
\end{defin}

Decorated polygons will be denoted by the appropriate script letter.  For instance if the underlying polygon is $A$ then the decorated polygon will be called $\mathcal{A}$.  It is possible for different curves to give rise to the same decorated polygon $\mathcal{A}$.  As such, $\mathcal{A}$ corresponds to a whole class of curves.  We will call any curve $\gamma$ in this class a \emph{representative} of $\mathcal{A}$.

\begin{rem}
Definition \ref{defDecPoly} refers to the space of twisted polygons.  This should not be confused with $\mathcal{P}_n$ or $\mathcal{P}_n^*$ which are spaces of projective equivalence classes of twisted polygons.  In general, we will be working with actual polygons instead of equivalence classes of polygons for the remainder of the paper.
\end{rem}

Given a collection of geometric objects which satisfy certain incidences (e.g. the vertices and sides of a polygon), a consistent choice of decorations of these objects may have to satisfy some relations.  The simplest example of such a relation occurs for a closed triangle.

\begin{prop}
\label{decRel}
In $\triangle ABC$ let $a = \join{B}{C}$, $b = \join{A}{C}$, and $c = \join{A}{B}$ (see Figure \ref{figDecTri}).  Let $A(t)$, $B(t)$, and $C(t)$ be curves through the vertices and use them to define curves corresponding to the sides (e.g. $a(t) = \join{B(t)}{C(t)}$).  Then the corresponding decorations satisfy
\begin{equation} \label{triangle}
[A,c^*,B,a^*,C,b^*] = [a,C^*,b,A^*,c,B^*]^{-1}
\end{equation}
\end{prop}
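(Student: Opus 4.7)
The plan is a direct first-order computation in projective coordinates adapted to the triangle. Since triple ratios are projective invariants, I may place $A=(1,0,0)$, $B=(0,1,0)$, $C=(0,0,1)$ so that $a:x=0$, $b:y=0$, $c:z=0$. I would then expand the three vertex curves as $A(t)=A+t(a_1,a_2,a_3)+O(t^2)$ and analogously for $B(t)$, $C(t)$, encoding all first-order data in nine scalars.

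The first step is to compute the six decorations. Since $A^{*}=\lim_{t\to 0}\join{A}{A(t)}$ is the line through $A$ in the direction of the tangent, one reads off $A^{*}=\{a_3 y=a_2 z\}$, with $B^{*}$ and $C^{*}$ obtained by cyclic symmetry. For the side decorations, expanding $a(t)=B(t)\times C(t)$ in $t$ gives $a(t)=(1+O(t),\,-tb_1,\,-tc_1)+O(t^{2})$; intersecting with $a:x=0$ yields $a^{*}=(0,c_1,-b_1)$, and cyclically $b^{*}=(c_2,0,-a_2)$ and $c^{*}=(b_3,-a_3,0)$.

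With the decorations in hand, each triple ratio reduces to a product of three signed length ratios. The standard identity --- if $Q=\alpha V_1+\beta V_2$ in homogeneous coordinates then $V_1Q/QV_2=\beta/\alpha$ --- gives, on line $c$, $Ac^{*}/c^{*}B=-a_3/b_3$, and similarly for the two cyclic analogues, so that
\[
[A,c^{*},B,a^{*},C,b^{*}]=-\frac{a_3 b_1 c_2}{a_2 b_3 c_1}.
\]
For the right-hand side I would use the definition of the triple ratio of three concurrent lines as the triple ratio of their intersections with any transversal; equivalently, the computation becomes the identical one in the dual plane with $a,b,c$ now playing the role of vertices and $A^{*},B^{*},C^{*}$ playing the role of side decorations. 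That calculation yields
\[
[a,C^{*},b,A^{*},c,B^{*}]=-\frac{a_2 b_3 c_1}{a_3 b_1 c_2},
\]
which is exactly the reciprocal of the LHS, proving \eqref{triangle}.

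The only real technical point to watch is sign bookkeeping: each decoration is a derivative rather than a convex combination, so it lies outside the segment between its pair of vertices/sides, and each of the six factors consequently contributes a minus sign. These six signs must multiply consistently on both sides for the identity to hold, and beyond this careful accounting I do not foresee any conceptual obstacle.
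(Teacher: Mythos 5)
Your proposal is correct, and it takes a genuinely different route from the paper. The paper derives Proposition~\ref{decRel} as a limiting case of Lemma~\ref{lemTwoTri}, a finite statement about a pair of triangles whose proof is itself deferred to an unwritten computation (cf.\ Remark~\ref{remNoProof}); the triangle $\triangle A'B'C'$ is then specialized to $\triangle A(t)B(t)C(t)$ and $t\to 0$ is taken, using continuity of triple ratios. You instead linearize from the start: placing the triangle at the standard reference frame and Taylor-expanding $A(t),B(t),C(t)$, you read off all six decorations explicitly in terms of the nine first-order coefficients and verify the identity by direct computation, using the fact that for collinear $V_1,V_2,Q$ with $Q=\alpha V_1+\beta V_2$ one has $V_1Q/QV_2=\beta/\alpha$ (the scale ambiguity cancels because each vertex appears once in a numerator role and once in a denominator role). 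The two triple ratios come out to $-\frac{a_3 b_1 c_2}{a_2 b_3 c_1}$ and $-\frac{a_2 b_3 c_1}{a_3 b_1 c_2}$, which are indeed mutual reciprocals. Your argument is more elementary and fully self-contained, whereas the paper's route passes through a finite projective identity (Lemma~\ref{lemTwoTri}) that has independent interest but is not proven in the text. One small quibble of presentation: the remark that each of the six factors ``contributes a minus sign'' because the decoration ``lies outside the segment'' is not quite the right justification --- the minus signs are simply the explicit signs in the coordinate formulas for $a^*,b^*,c^*,A^*,B^*,C^*$, and whether a decoration lies inside or outside a segment depends on the signs of the $a_i,b_i,c_i$. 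This does not affect the correctness of the final identity, which holds in homogeneous coordinates regardless.
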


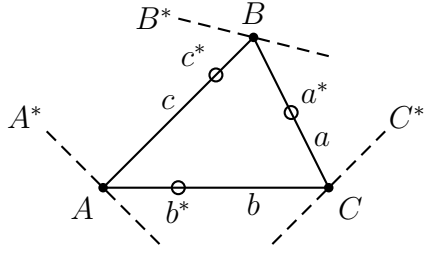
\begin{figure}
\begin{pspicture}(5,4)
\pspolygon[showpoints=true](1,1)(3,3)(4,1)
\uput[dl](1,1){$A$}
\uput[u](3,3){$B$}
\uput[dr](4,1){$C$}

\uput{2pt}[ur](3.75,1.5){$a$}
\uput{2pt}[d](3,1){$b$}
\uput{2pt}[ul](2,2){$c$}

\pscircle(3.5,2){.1}
\uput[ur](3.5,2){$a^*$}
\pscircle(2,1){.1}
\uput[d](2,1){$b^*$}
\pscircle(2.5,2.5){.1}
\uput[ul](2.5,2.5){$c^*$}

\psline[linestyle=dashed](.25,1.75)(1.75,.25)
\uput{2pt}[ul](.25,1.75){$A^*$}
\psline[linestyle=dashed](2,3.25)(4,2.75)
\uput{2pt}[l](2,3.25){$B^*$}
\psline[linestyle=dashed](3.25,.25)(4.75,1.75)
\uput{2pt}[ur](4.75,1.75){$C^*$}
\end{pspicture}
\caption{A decorated triangle}
\label{figDecTri}
\end{figure}

\begin{lem}
\label{lemTwoTri}
Let $\triangle A'B'C'$ be another triangle with $a' = \join{B'}{C'}$, $b' = \join{A'}{C'}$, and $c' = \join{A'}{B'}$ (see Figure \ref{figTwoTri}).  Then
\begin{equation} \label{twoTriangles}
[A,\meet{c}{c'},B,\meet{a}{a'},C,\meet{b}{b'}] = [a',\join{C}{C'},b',\join{A}{A'},c',\join{B}{B'}]^{-1}
\end{equation}
\end{lem}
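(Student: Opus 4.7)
The plan is to verify \eqref{twoTriangles} by a direct coordinate computation, using projective duality to reduce the triple ratio of lines on the right to a triple ratio of points of the same form as the one on the left. Both sides are projective invariants of the ordered configuration $(A,B,C,A',B',C')$, so I would choose homogeneous coordinates in which $A = (1,0,0)$, $B=(0,1,0)$, $C=(0,0,1)$, making $a,b,c$ the coordinate lines. Writing $A' = (\alpha_1,\alpha_2,\alpha_3)$, $B' = (\beta_1,\beta_2,\beta_3)$, $C' = (\gamma_1,\gamma_2,\gamma_3)$, each of $\meet{a}{a'}$, $\meet{b}{b'}$, $\meet{c}{c'}$ has homogeneous coordinates built from $2\times 2$ minors $\alpha_i\beta_j - \alpha_j\beta_i$ and their analogues for the pairs $(B',C')$ and $(A',C')$.

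For the left-hand side, each of the three signed-length ratios in the triple ratio becomes a quotient of two such minors once each intersection point is expressed in the basis of its line coming from the two named endpoints of that line. Multiplying them out, the LHS simplifies to $-N/D$, where $N$ and $D$ are each a product of three minors drawn one from each of the pairs $(A',B')$, $(B',C')$, $(A',C')$.

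For the right-hand side I would pass to the dual projective plane via the standard duality sending a point $P$ to its dual line and a line $\ell$ to its dual point $\ell^{\vee}$; under this duality concurrent lines become collinear points and the triple ratio is preserved. The six lines in the inner triple ratio dualize to six points: the ``dual triangle'' with vertices $(a')^{\vee}, (b')^{\vee}, (c')^{\vee}$ and sides $(A')^{\vee}, (B')^{\vee}, (C')^{\vee}$, together with three further points $(\join{C}{C'})^{\vee}$, $(\join{A}{A'})^{\vee}$, $(\join{B}{B'})^{\vee}$ lying on the sides $(C')^{\vee}$, $(A')^{\vee}$, $(B')^{\vee}$ respectively. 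This is once again a Menelaus-type configuration, so the minor-based computation from the previous paragraph applies verbatim, with dual coordinates read off from the covectors $A\times A'$, $B\times B'$, $C\times C'$, $B'\times C'$, $A'\times C'$, $A'\times B'$. Inverting the resulting triple ratio, as required by the exponent $-1$ in \eqref{twoTriangles}, reproduces $-N/D$ term by term.

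The main obstacle is sign bookkeeping: each signed-length ratio depends on an orientation of its line, and the three independent factors of $-1$ must combine the same way on both sides. Once a consistent orientation convention is fixed (for instance the one determined by the ordering of the two named endpoints on each line in its triple-ratio factor), the two computations line up and the identity follows.
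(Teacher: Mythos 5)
Your proposal is correct and takes essentially the same route as the paper, whose ``proof'' of this lemma is just a pointer to Remark~\ref{remNoProof}, i.e.\ the identity is to be verified by a (tedious, omitted) coordinate computation. Normalizing $A=(1,0,0)$, $B=(0,1,0)$, $C=(0,0,1)$ and writing $A'=(\alpha_i)$, $B'=(\beta_i)$, $C'=(\gamma_i)$ as you suggest, both sides of \eqref{twoTriangles} indeed evaluate to
$-\dfrac{(\alpha_2\beta_3-\alpha_3\beta_2)(\beta_3\gamma_1-\beta_1\gamma_3)(\alpha_1\gamma_2-\alpha_2\gamma_1)}{(\alpha_3\beta_1-\alpha_1\beta_3)(\beta_1\gamma_2-\beta_2\gamma_1)(\alpha_2\gamma_3-\alpha_3\gamma_2)}$,
confirming your outline.
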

\begin{proof}
See Remark \ref{remNoProof}.
\end{proof}

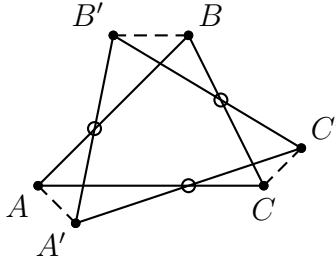
\begin{figure}
\begin{pspicture}(5,4)

\pnode(1,1){A}
\pnode(3,3){B}
\pnode(4,1){C}
\pnode(1.5,.5){A'}
\pnode(2,3){B'}
\pnode(4.5,1.5){C'}

\uput[dl](A){$A$}
\uput[ur](B){$B$}
\uput[d](C){$C$}
\uput[dl](A'){$A'$}
\uput[ul](B'){$B'$}
\uput[ur](C'){$C'$}

\ncline[arrows=*-*]{A}{B}
\ncline[arrows=*-*]{B}{C}
\ncline[arrows=*-*]{C}{A}
\ncline[arrows=*-*]{A'}{B'}
\ncline[arrows=*-*]{B'}{C'}
\ncline[arrows=*-*]{C'}{A'}

\ncline[linestyle=dashed]{A}{A'}
\ncline[linestyle=dashed]{B}{B'}
\ncline[linestyle=dashed]{C}{C'}
\pscircle(1.75,1.76){.1}
\pscircle(3.43,2.14){.1}
\pscircle(3,1){.1}
\end{pspicture}
\caption{The points and lines involved in Lemma \ref{lemTwoTri}}
\label{figTwoTri}
\end{figure}

\begin{proof}[Proof of Proposition \ref{decRel}]
In Lemma \ref{lemTwoTri}, take $A' = A(t)$, $B' = B(t)$, and $C'=C(t)$ and consider the limit as $t$ goes to 0.  In the limit, the vertices and sides of $\triangle A'B'C'$ approach their counterparts in $\triangle ABC$.  By definition, $\join{A}{A'}$ approaches $A^*$ and similarly for the other vertices and sides.  So the limit of \eqref{twoTriangles} is precisely \eqref{triangle}.
\end{proof}

\begin{rem}
For each $n$, there is a relation similar to \eqref{triangle} among the decorations of a closed $n$-gon.  Moreover, this is the only relation that holds.  Hence, one can pick $n$ vertex decorations and $n-1$ sides decorations (or the other way around) independently, and the last decoration is then determined.  The space of decorations of a fixed polygon $A$, then, has dimension $2n-1$.  One can check that this space naturally corresponds to the projectivized tangent space of $A$ within the space of closed $n$-gons.  A similar statement is probably true for twisted polygons, but we do not understand what the relations are among the individual decorations.
\end{rem}

The next proposition shows that repeated applications of \eqref{triangle} can be used to extend the pentagram map to decorated polygons. 

\begin{prop}
\label{propTTildeWorks}
Let $A(t)$ be a curve in the space of polygons and let $B(t)=T(A(t))$ for all $t$.  Let $\mathcal{A}$ and $\mathcal{B}$ be the corresponding decorations of $A=A(0)$ and $B=B(0)$ respectively.  Then $\mathcal{B}$ is uniquely determined by $\mathcal{A}$.
\end{prop}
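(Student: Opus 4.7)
\emph{Proof plan.} The underlying polygon $B = T(A)$ is already determined by $A$ via the two-step construction $b_i = \join{A_{i-1}}{A_{i+1}}$ and $B_j = \meet{b_{j-\frac{1}{2}}}{b_{j+\frac{1}{2}}}$, so the content of the proposition is that each decoration on a side or vertex of $B$ is forced by the data of $\mathcal{A}$. My plan is to produce each such decoration explicitly as the unique solution of a single triangle decoration relation coming from Proposition~\ref{decRel}, applied first to a triangle of three consecutive vertices of $A$ and then to a triangle attached to the new vertex $B_j$.

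To recover the decoration $b_i^*$ of each side of $B$, I would use the triangle $\triangle A_{i-1} A_{i+1} A_i$, whose three sides are $b_i$, $a_{i-\frac{1}{2}}$, and $a_{i+\frac{1}{2}}$. Applying \eqref{triangle} to this triangle gives
\begin{equation*}
[A_{i-1}, b_i^*, A_{i+1}, a_{i+\frac{1}{2}}^*, A_i, a_{i-\frac{1}{2}}^*] = [a_{i+\frac{1}{2}}, A_i^*, a_{i-\frac{1}{2}}, A_{i-1}^*, b_i, A_{i+1}^*]^{-1}.
\end{equation*}
Every entry on the right-hand side is read off from $\mathcal{A}$, while on the left only $b_i^*$ is unknown; it is then recovered as the unique point on the line $b_i$ making the triple ratio equal to the prescribed value.

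To recover the decoration $B_j^*$ of each vertex of $B$, I would next use the triangle $\triangle A_{j-\frac{1}{2}} A_{j+\frac{1}{2}} B_j$, whose three sides are $a_j$, $b_{j+\frac{1}{2}}$, and $b_{j-\frac{1}{2}}$. A second application of \eqref{triangle} produces
\begin{equation*}
[A_{j-\frac{1}{2}}, a_j^*, A_{j+\frac{1}{2}}, b_{j-\frac{1}{2}}^*, B_j, b_{j+\frac{1}{2}}^*] = [b_{j-\frac{1}{2}}, B_j^*, b_{j+\frac{1}{2}}, A_{j-\frac{1}{2}}^*, a_j, A_{j+\frac{1}{2}}^*]^{-1}.
\end{equation*}
The left-hand side is now entirely determined by $\mathcal{A}$ together with the side decorations computed in the preceding step, and on the right $B_j^*$ is the only unknown; it is recovered as the unique line through $B_j$ for which the dual triple ratio assumes the prescribed value.

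What remains to justify is that these formulas actually coincide with the decorations induced on $B$ by any representative curve $A(t)$ of $\mathcal{A}$, and this is where I expect the real content of the proof to lie. The point is that the decorations induced by such a curve automatically satisfy Proposition~\ref{decRel} on the two chosen triangles, so they must equal the values uniquely pinned down by the formulas above; this simultaneously gives existence and uniqueness of $\mathcal{B}$. The remaining obstacle is a matter of genericity: both triangles must be nondegenerate and the triple ratios must avoid the exceptional values $0$ and $\infty$. Nondegeneracy of $\triangle A_{i-1} A_{i+1} A_i$ is immediate from the general-position condition on consecutive quadruples of vertices of $A$, and nondegeneracy of $\triangle A_{j-\frac{1}{2}} A_{j+\frac{1}{2}} B_j$ together with nonexceptionality of the triple ratios hold generically along any representative curve $A(t)$.
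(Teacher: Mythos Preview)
Your proposal is correct and follows essentially the same approach as the paper: use Proposition~\ref{decRel} on the triangle $A_{i-1}A_iA_{i+1}$ to determine $b_i^*$, then on the triangle $A_{j-\frac{1}{2}}B_jA_{j+\frac{1}{2}}$ to determine $B_j^*$. The paper's proof is more terse and omits the explicit triple-ratio formulas and the genericity discussion you include, but the underlying argument is identical.
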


\begin{proof}
Given an index $i$ of $A$, consider the triangle with vertices $A_{i-1}$, $A_i$, and $A_{i+1}$ (see Figure \ref{figConSide}).  It has all three vertices and two of its sides coming from $A$.  The last side is $\join{A_{i-1}}{A_{i+1}}=b_i$.  Applying Proposition \ref{decRel} to this triangle, then, expresses $b_i^*$ in terms of $\mathcal{A}$.

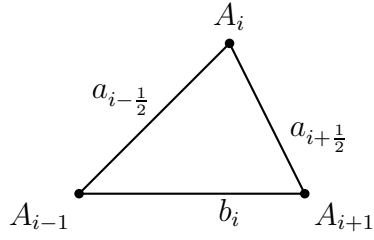
\begin{figure}
\begin{pspicture}(5,4)
\pspolygon[showpoints=true](1,1)(3,3)(4,1)
\uput[dl](1,1){$A_{i-1}$}
\uput[u](3,3){$A_i$}
\uput[dr](4,1){$A_{i+1}$}

\uput{2pt}[ur](3.75,1.5){$a_{i+\frac{1}{2}}$}
\uput{2pt}[d](3,1){$b_i$}
\uput{2pt}[ul](2,2){$a_{i-\frac{1}{2}}$}
\end{pspicture}
\caption{The triangle used to compute $b_i^*$}
\label{figConSide}
\end{figure}

Next, consider the triangle with vertices $A_{j-\frac{1}{2}}$, $B_j$ and $A_{j+\frac{1}{2}}$ for some index $j$ of $B$ (see Figure \ref{figConVert}).  Two of its vertices and one of its sides belong to $A$.  The other two sides are in fact sides of $B$, namely
\begin{align*}
\join{A_{j-\frac{1}{2}}}{B_j} &= b_{j+\frac{1}{2}} \\
\join{A_{j+\frac{1}{2}}}{B_j} &= b_{j-\frac{1}{2}} 
\end{align*}
These were both decorated in the previous step.  Another application of Proposition \ref{decRel}, then, determines $B_j^*$.

\begin{figure}
\begin{pspicture}(5,4)
\pspolygon[showpoints=true](1,3)(3,1)(4,3)
\uput[ul](1,3){$A_{j-\frac{1}{2}}$}
\uput[d](3,1){$B_j$}
\uput[ur](4,3){$A_{j+\frac{1}{2}}$}

\uput{2pt}[dr](3.75,2.5){$b_{j-\frac{1}{2}}$}
\uput{2pt}[u](3,3){$a_j$}
\uput{2pt}[dl](2,2){$b_{j+\frac{1}{2}}$}
\end{pspicture}
\caption{The triangle used to compute $B_j^*$}
\label{figConVert}
\end{figure}
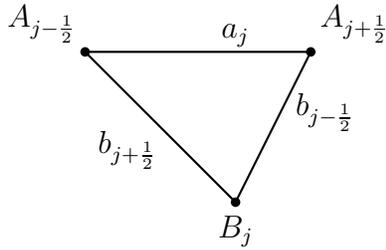
\end{proof}

The procedure above to construct $\mathcal{B}$ from $\mathcal{A}$ should be thought of as a lift of the pentagram map to the space of decorated polygons.  To distinguish this operation from the original map, write $\mathcal{B} = \tilde{T}(\mathcal{A})$.  The construction defining $\tilde{T}$ is given in Algorithm \ref{algTLift}.

\begin{algorithm}
\caption{$\tilde{T}(\mathcal{A})$}
\begin{algorithmic}
\FORALL{$i$}
\STATE $b_i := \join{A_{i-1}}{A_{i+1}}$  
\STATE $b_i^*$ := \DecorateSide($\mathcal{A}$, $b_i$, $i$)
\ENDFOR
\FORALL{$j$}
\STATE $B_j := \meet{b_{j-\frac{1}{2}}}{b_{j+\frac{1}{2}}}$ 
\STATE $B_j^*$ := \DecorateVertex($\mathcal{A}$, $(b_i)$, $(b_i^*)$, $B_j$, $j$)
\ENDFOR
\RETURN $\mathcal{B}$
\end{algorithmic}
\label{algTLift}
\end{algorithm}

The subroutines \DecorateSide\ and \DecorateVertex\ build the triangles in Figures \ref{figConSide} and \ref{figConVert} respectively, and use \eqref{triangle} to compute the desired decoration. 

\begin{rem}
We will only be using decorated polygons and the map $\tilde{T}$ as tools in our straightedge constructions.  However, these are likely interesting objects to study in their own right.  Some immediate questions come to mind such as
\begin{itemize}
\item What would be a good set of coordinates on the space of decorated polygons?
\item In such coordinates, does the map $\tilde{T}$ take a nice form?
\item Does $\tilde{T}$ define a discrete integrable system?
\end{itemize}
\end{rem}

\section{Degenerations}
We saw in the previous section that \eqref{triangle} is the only identity needed to apply the pentagram map to a generic decorated polygon.  However, the motivation for introducing decorations is to handle degenerate cases.  In this section, we introduce methods which will eventually be used to apply the pentagram map to a large class of degenerate polygons.  Everything will be expressed in terms of local rules involving triangles and complete quadrilaterals.

\subsection{Triangles}
Let $A(t)$, $B(t)$, $C(t)$ be curves in the plane passing through points $A$, $B$, and $C$ at time $t=0$.  Assume that for all $t \neq 0$, the points $A(t)$, $B(t)$, and $C(t)$ are in general position, and let $a(t)$, $b(t)$, and $c(t)$ denote the sides of the triangle they form.  We allow $A$, $B$, and $C$ to be collinear, or even equal to each other.  However, assume that the limits
\begin{align*}
a &= \lim_{t \to 0} a(t) \\
b &= \lim_{t \to 0} b(t) \\
c &= \lim_{t \to 0} c(t) 
\end{align*}
all exist.  Assume that the limits defining the decorations $A^*,B^*,C^*,a^*,b^*,c^*$ all exist as well.

Now, Proposition \ref{decRel} did not allow for $A$, $B$ and $C$ to be collinear.  However, by continuity \eqref{triangle} still holds in the present context.

\begin{prop}
\label{propThirdVert}
Assume that the decorations of the triangle are generic (i.e. distinct from each other and from the vertices and sides of the triangle.)  Then the vertex $B$ is uniquely determined by the sides $a,b,c$, the vertices $A,C$, and all the corresponding decorations.  
\end{prop}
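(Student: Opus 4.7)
The approach is to recognise that $B$ is pinned down by two incidences: it must lie on the side $a$ and on its own decoration $B^*$. Under the genericity hypothesis these two lines are distinct, so $B$ is uniquely their point of intersection, and no further data are needed.

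The steps are as follows. (i) Since $B(t) \in a(t)$ for every $t \neq 0$, and since $B(t) \to B$ and $a(t) \to a$ by hypothesis, continuity of the point-on-line incidence relation in the projective plane gives $B \in a$; the same argument gives $B \in c$. (ii) By definition the decoration $B^* = \lim_{t \to 0} \join{B}{B(t)}$ is a line through $B$, so $B \in B^*$. (iii) The genericity hypothesis requires every decoration to be distinct from the sides of the triangle, so in particular $B^* \neq a$. Hence $a$ and $B^*$ are two distinct lines both containing $B$, and
\begin{displaymath}
B = \meet{a}{B^*}
\end{displaymath}
is forced. If $a \neq c$ one could equally well take $B = \meet{a}{c}$, so the decoration $B^*$ is genuinely needed only in the degenerate subcase $a = c$ (in which $a = b = c$ whenever $A \neq C$, and the whole triangle collapses onto a single line).

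The main obstacle is essentially a bookkeeping one: one must verify that the genericity clause, as stated, really does forbid the only remaining ambiguous configuration $B^* = a$, and that none of the limits invoked in steps (i)--(ii) secretly depend on the particular choice of representative curve. Were the genericity clause to be weakened---as will be the case in the more deeply degenerate situations arising later in the paper, where $B^*$ itself might coincide with $a$---the appropriate fallback is the triple-ratio identity \eqref{triangle} of Proposition \ref{decRel}: when $a = b = c =: \ell$ its left-hand side becomes a nondegenerate M\"obius function of the position of $B$ along $\ell$, while its right-hand side, although formally a triple ratio of three coincident lines, is computable from $A^*, C^*, a^*, b^*, c^*$ by a continuity/limiting argument, so that solving on $\ell$ again pins down $B$ uniquely.
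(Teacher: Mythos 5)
Your construction $B = \meet{a}{B^*}$ presupposes that the decoration $B^*$ is part of the given data, and this is exactly the hypothesis that fails. Read the statement again: $B$ is to be determined by ``the sides $a,b,c$, the vertices $A,C$, and all the corresponding decorations'' --- that is, by $a^*, b^*, c^*, A^*, C^*$, but \emph{not} $B^*$. This reading is forced by how the proposition is used: in the subroutine $\ConstructVertexB$, the vertex $C_i$ is produced first and its decoration $C_i^*$ is only computed afterwards by $\DecorateVertexB$, so at the moment the proposition is invoked the decoration of the vertex being constructed is simply unavailable. The paper's own proof underscores this --- in the case $a=c\neq b$ it appeals to the fact that ``the dependence on $B^*$ has \emph{disappeared}'' from identity \eqref{triangle} after the degeneration, which would be a pointless observation if one were free to use $B^*$ directly.

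The correct argument, which you gesture at only as a ``fallback,'' is in fact the main content. When $a\neq c$ one has $B=\meet{a}{c}$ and no decorations are needed. When $a=c\neq b$ the coincidence $A=C$ collapses the six-term triple ratio on the left of \eqref{triangle} to the cross ratio $[A,c^*,B,a^*]$, while the coincidence $a=c$ collapses the right to $[a,C^*,b,A^*]^{-1}$ --- crucially eliminating $B^*$ --- and the resulting relation determines $B$ on the line $a$. When $a=b=c$ the right side degenerates to $-1$ (since three alternate entries of the triple ratio coincide), giving $[A,c^*,B,a^*,C,b^*]=-1$, which again determines $B$ without reference to $B^*$. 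Your proposal does not address the middle case at all, and in the last case you suggest computing the right side ``by a continuity/limiting argument,'' whereas in fact it just equals $-1$ identically. The essential insight you are missing is that the relation \eqref{triangle} degenerates in precisely the right way to make $B^*$ irrelevant, which is what rescues the construction.
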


\begin{proof}
If $a \neq c$, then $B = \meet{a}{c}$, so assume $a = c$.  There are two cases depending on if $b$ equals the other sides.

If $b \neq a=c$ then $A = \meet{b}{c} = \meet{b}{a} = C$.  In general, if $P_1=P_5$ then 
\begin{align*}
[P_1,P_2,P_3,P_4,P_5,P_6] &= \frac{P_1P_2}{P_2P_3}\frac{P_3P_4}{P_4P_5}\frac{P_5P_6}{P_6P_1} \\
                          &= -\frac{P_1P_2}{P_2P_3}\frac{P_3P_4}{P_4P_1} \\
                          &= -[P_1,P_2,P_3,P_4] 
\end{align*}
so \eqref{triangle} simplifies to 
\begin{equation}
\label{degTriangle}
[A,c^*,B,a^*] = [a,C^*,b,A^*]^{-1}
\end{equation}
Note that the dependence on $B^*$ has disappeared, so this identity determines $B$ from the given geometric data.

Alternately, suppose $b=a=c$.  Whenever $P_1=P_3=P_5$ we have
\begin{align*}
[P_1,P_2,P_3,P_4,P_5,P_6] &= \frac{P_1P_2}{P_2P_3}\frac{P_3P_4}{P_4P_5}\frac{P_5P_6}{P_6P_1} \\
                          &= \frac{P_1P_2}{P_2P_1}\frac{P_3P_4}{P_4P_3}\frac{P_5P_6}{P_6P_5} \\
                          &= -1 
\end{align*}
Consequently, \eqref{triangle} becomes
\begin{displaymath}
[A,c^*,B,a^*,C,b^*] = -1
\end{displaymath}
so again we can construct $B$.
\end{proof}

To sum up, if $a=c$ in a triangle then \eqref{triangle} can be used to construct the vertex $B$.  The downside is that this identity can no longer be used to determine the decoration $B^*$.  In fact, $B^*$ is independent from the rest of the triangle when $a=c$.  As such, we will need more data to construct vertex decorations when degeneracies occur.

\subsection{Complete quadrilaterals}
A \emph{complete quadrilateral} is a projective configuration consisting of four lines (called sides) in general position together with the six points (called vertices) at which they intersect.  Call the sides $l_1,l_2,l_3,l_4$ and call the vertices $A,B,C,D,E,F$ as in Figure \ref{figQuad}.

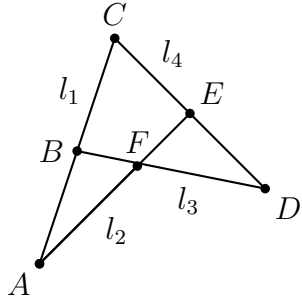
\begin{figure}
\begin{pspicture}(6,5)
\psline[showpoints=true](3,3)(1,1)(2,4)(4,2)(1.5,2.5)
\psline[showpoints=true](1,1)(2.3,2.3)
\uput[dl](1,1){$A$}
\uput[l](1.5,2.5){$B$}
\uput[u](2,4){$C$}
\uput[dr](4,2){$D$}
\uput[ur](3,3){$E$}
\uput[u](2.3,2.3){$F$}
\uput[ul](1.67,3){$l_1$}
\uput[dr](1.75,1.75){$l_2$}
\uput[dl](3.25,2.15){$l_3$}
\uput[ur](2.5,3.5){$l_4$}
\end{pspicture}
\caption{A complete quadrilateral}
\label{figQuad}
\end{figure}

As with triangles, we define degenerate complete quadrilaterals to be configurations that can be obtained as a limit of ordinary complete quadrilaterals.  More precisely, let $l_1(t), \ldots, l_4(t)$ and $A(t), \ldots, F(t)$ be smooth curves which define a complete quadrilateral at each time $t \neq 0$.  Let $l_1=l_1(0), l_2=l_2(0), \ldots, F=F(0)$.  Then some or all of the sides $l_i$ could be equal.  Assume that the decorations $l_1^*, l_2^*, \ldots, F^*$ are all defined.

\begin{prop}
\label{propMultiConj}
Consider a degenerate complete quadrilateral with $l_1=l_2=l_3=l_4$.  Then the vertex decoration $F^*$ is uniquely determined by the other vertices, sides, and decorations. 
\end{prop}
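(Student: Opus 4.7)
My plan is to reduce the statement to a linear dependence among six linear forms in the second-order data of the degeneration, which then determines $F^*$ uniquely in terms of the other decorations.

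I represent the given degenerate configuration as the $t = 0$ limit of a smooth one-parameter family $\{l_i(t)\}_{i=1}^{4}$ with $l_i(0) = \ell$ inducing the prescribed decorations. Working in affine coordinates in which $\ell$ is the $x$-axis, I write each $l_i(t)$ as the zero set of $Y + A_i(t)X + C_i(t)$, with Taylor expansions $A_i(t) = \alpha_i t + \tfrac12 P_i t^2 + O(t^3)$ and $C_i(t) = \gamma_i t + \tfrac12 Q_i t^2 + O(t^3)$. A routine computation expresses each vertex $V_{ij}$ of the degenerate quadrilateral and each side decoration $l_i^* = -\gamma_i/\alpha_i$ in terms of the first-order data $(\alpha_i, \gamma_i)$ alone, and shows that the slope of each vertex decoration $V_{ij}^*$ with respect to $\ell$ is proportional to $1/S_{ij}$, where
\[
S_{ij} = (Q_j - Q_i) - V_{ij}(P_i - P_j),
\]
the proportionality constant depending only on the first-order data.

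Fixing the first-order data, the proposition reduces to the claim that the six linear forms $S_{ij}$ (for the six pairs $\{i,j\} \subset \{1,2,3,4\}$) in the eight second-order variables $(P_i, Q_i)$ satisfy a unique nontrivial linear relation; equivalently, that their $6 \times 8$ coefficient matrix has rank $5$. A direct row reduction exhibits a relation $\sum_{i<j} \lambda_{ij} S_{ij} \equiv 0$ whose coefficients $\lambda_{ij}$ are rational functions of the vertices and side decorations. Substituting $S_{ij} = c_{ij}/\operatorname{slope}(V_{ij}^*)$ converts this into a single linear equation among the six vertex decoration slopes, from which $\operatorname{slope}(F^*)$, and hence the line $F^*$, can be solved uniquely in terms of the other five vertex decorations together with the vertices and side decorations.

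The main obstacle is the bookkeeping of the linear-algebra computation, which is elementary but bulky. A purely synthetic argument using only Proposition \ref{decRel} seems elusive: applied to any of the four sub-triangles formed by three of the four lines, it degenerates in the limit $l_1 = l_2 = l_3 = l_4 = \ell$ into a Menelaus-type identity on $\ell$ involving no vertex decorations at all, because each of the three pencils on the right-hand side of \eqref{triangle} contributes only a factor of $-1$, exactly as in the degenerate triangle case analyzed in the proof of Proposition \ref{propThirdVert}. Following the philosophy of Remark \ref{remNoProof}, I would verify the required linear relation by direct calculation and omit the tedious details.
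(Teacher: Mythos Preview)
Your analytic approach is correct in outline: parametrizing the family in affine coordinates, expanding to second order, and reducing to a rank computation on the linear forms $S_{ij}$ does yield the desired uniqueness. The kernel direction $(P_i,Q_i)\mapsto(P_i+\lambda\alpha_i,\,Q_i+\lambda\gamma_i)$ together with the two constant shifts shows the rank is at most $5$, and a generic check confirms it is exactly $5$, so there is a single relation and $F^*$ is determined.

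However, your claim that a synthetic argument via Proposition~\ref{decRel} is elusive is precisely where you and the paper diverge. The paper's proof \emph{is} entirely synthetic and uses only Proposition~\ref{decRel}. The point you miss is that one should not apply the triangle identity to the four degenerate sub-triangles of the quadrilateral itself (where, as you correctly observe, it collapses to a Menelaus identity with no vertex decorations). Instead, the paper introduces three auxiliary generic objects (a point $c^*$ and two lines $P^*,(C')^*$), uses them to build the configuration of the \TripleConjugate\ construction at each time $t$, and then applies Proposition~\ref{decRel} to a chain of \emph{non-degenerate} auxiliary triangles to propagate decorations step by step until $F^*$ is reached. This buys the paper something your argument does not: an explicit straightedge construction of $F^*$, which is exactly what is needed, since this proposition is invoked as a subroutine (\DecorateVertexB) inside the main algorithm. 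Your approach is cleaner as a pure existence-and-uniqueness statement, but it would still need to be converted into a construction before it could be used in Algorithm~\ref{algTLift2}.
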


\begin{proof}
By Menelaus' theorem, $[A,B,C,D,E,F] = -1$.  According to the appendix, $F$ can be constructed from the other vertices using a construction as in Figure \ref{figTripConj}.  The idea of the present construction is to build a configuration as in that figure at each time $t$.

Choose generically in the plane a point $c^*$ and lines $P^*$ and $(C')^*$ (the reason for these names will be clear shortly.)  For each $t$, define
\begin{align*}
c(t) &= \join{C(t)}{c^*} \\
P(t) &= \meet{c(t)}{P^*} \\
C'(t) &= \meet{c(t)}{(C')^*}
\end{align*}
Note that
\begin{displaymath}
\lim_{t \to 0}\meet{c(t)}{c(0)} = \lim_{t \to 0}\meet{\join{C(t)}{c^*}}{\join{C}{c^*}} = c^*
\end{displaymath}
which justifies the notation $c^*$.  Similar remarks hold for $P^*$ and $C'^*$.  Defining 
\begin{align*}
B'(t) &= \meet{\join{A(t)}{C'(t)}}{\join{B(t)}{P(t)}} \\
D'(t) &= \meet{\join{E(t)}{C'(t)}}{\join{D(t)}{P(t)}} 
\end{align*}
we get at time $t=0$ a configuration as in Figure \ref{figTripConj}.  In particular $B'$, $D'$, and $F$ are collinear.  The proof of this fact given in the appendix generalizes to show that $B'(t)$, $D'(t)$, and $F(t)$ are collinear for all $t$.  Let $l_3'(t)$ denote the common line.  By way of notation, let $l_1'(t) = \join{A(t)}{C'(t)}$, $l_4'(t) = \join{E(t)}{C'(t)}$, $b(t) = \join{B(t)}{P(t)}$, and $d(t) = \join{D(t)}{P(t)}$.  

We are given decorations of $A,B, \ldots, E$ and $l_1,l_2,l_3,l_4$ from the outset.  We chose arbitrarily decorations of $P$, $C'$ and $c$.  In a generic triangle, knowing five of the six decorations determines the sixth by Proposition \ref{decRel}.  This fact can be used to find all missing decorations in our configuration.  To start, use the first triangle in Figure \ref{figDecTripConj1} to find $(l_1')^*$ and the second triangle to find $b^*$.  Once these decorations are found, the third triangle in the figure can be used to determine $(B')^*$.  A similar method is used to find $(l_4')^*$, $d^*$, and then $(D')^*$.  Finally, use the first triangle in Figure \ref{figDecTripConj2} to find $(l_3')^*$ and the other triangle to find $F^*$.
\end{proof}

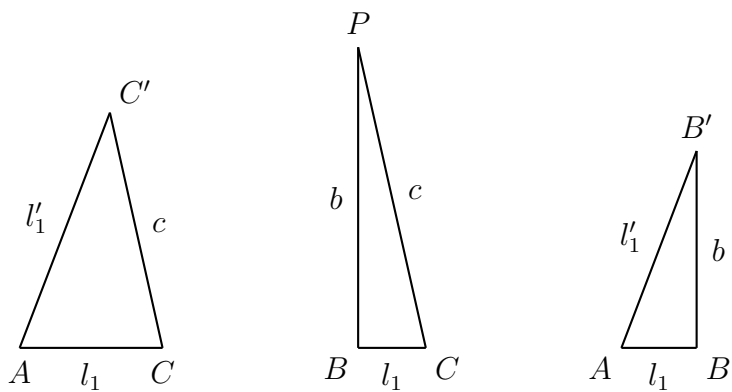
\begin{figure}
\begin{pspicture}(11,6)
\rput(0,0){
\pnode(1,1){A}
\pnode(2.9,1){C}
\pnode(2.2,4.13){C1}

\uput[d](A){$A$}
\uput[d](C){$C$}
\uput[ur](C1){$C'$}
\ncline{A}{C1}
\naput{$l_1'$}
\ncline{C1}{C}
\naput{$c$}
\ncline{C}{A}
\naput{$l_1$}
}

\rput(3.5,0){
\pnode(2,1){B}
\pnode(2,5){P}
\pnode(2.9,1){C}

\uput[dl](B){$B$}
\uput[u](P){$P$}
\uput[dr](C){$C$}
\ncline{B}{P}
\naput{$b$}
\ncline{P}{C}
\naput{$c$}
\ncline{C}{B}
\naput{$l_1$}
}

\rput(8,0){
\pnode(1,1){A}
\pnode(2,3.62){B1}
\pnode(2,1){B}

\uput[dl](A){$A$}
\uput[u](B1){$B'$}
\uput[dr](B){$B$}
\ncline{A}{B1}
\naput{$l_1'$}
\ncline{B1}{B}
\naput{$b$}
\ncline{B}{A}
\naput{$l_1$}
}

\end{pspicture}
\caption{The triangles used to find $(l_1')^*$, $b^*$, and finally $(B')^*$}
\label{figDecTripConj1}
\end{figure}

\begin{figure}
\begin{pspicture}(6,4)
\rput(-3,-2){
\pnode(2,3.62){B1}
\pnode(2.2,4.14){C1}
\pnode(3.42,2.89){D1}

\uput[dl](B1){$B'$}
\uput[u](C1){$C'$}
\uput[dr](D1){$D'$}
\ncline{B1}{C1}
\naput{$l_1'$}
\ncline{C1}{D1}
\naput{$l_4'$}
\ncline{D1}{B1}
\naput{$l_3'$}
}

\rput(.5,0){
\pnode(3.42,2.89){D1}
\pnode(7.14,1){F}
\pnode(5.3,1){E}

\uput[ul](D1){$D'$}
\uput[dr](F){$F$}
\uput[dl](E){$E$}
\ncline{D1}{F}
\naput{$l_3'$}
\ncline{F}{E}
\naput{$l_2$}
\ncline{E}{D1}
\naput{$l_4'$}
}

\end{pspicture}
\caption{The triangles used to find $(l_3')^*$ and $F^*$}
\label{figDecTripConj2}
\end{figure}
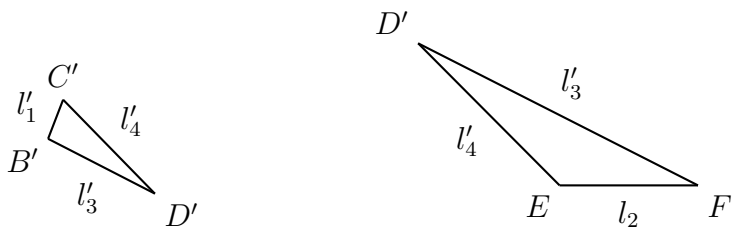

\subsection{Degenerate polygons}

A \emph{degenerate polygon} is a sequences of points and an interlacing sequence of lines, which occur as the limits of the vertices and sides, respectively, of some twisted polygons.  More precisely, if $A(t)$ is a twisted polygon for $t \neq 0$ and the appropriate limits are defined, then we get a degenerate polygon $A$ with vertices $A_i = \lim_{t\to0} A_i(t)$ and sides $a_j = \lim_{t\to0}a_j(t)$.  Fixing such a curve $A(t)$, we get decorations for $A$ as before.

Using our understanding of degenerate triangles and complete quadrilaterals, we are ready to state a version of $\tilde{T}$ which works for degenerate polygons.  As will be explained, the algorithm requires two consecutive iterates of the pentagram map, $\mathcal{A}$ and $\mathcal{B}$, as input.  The output is the iterate $\mathcal{C}$ which follows these two.  The new procedure is called $\tilde{T}_2$ and is given in Algorithm \ref{algTLift2}.  In the algorithm, $j$ ranges over the index set of $B$ and $i$ ranges over the index set of $C$.  A description of the subroutines appearing in the algorithm will follow.

\begin{algorithm}
\caption{$\tilde{T}_2$($\mathcal{A}$, $\mathcal{B}$)}
\begin{algorithmic}
\FORALL{$j$}
\STATE $c_j$ := \ConstructSideB($\mathcal{B}$, $j$)
\ENDFOR
\FORALL{$j$}
\STATE $c_j^*$ := \DecorateSideB($\mathcal{A}$, $\mathcal{B}$, $c_j$, $j$)
\ENDFOR
\FORALL{$i$}
\STATE $C_i$ := \ConstructVertexB($\mathcal{B}$, $(c_j)$, $(c_j^*)$, $i$) 
\ENDFOR
\FORALL{$i$}
\STATE $C_i^*$ := \DecorateVertexB($\mathcal{A}$, $\mathcal{B}$, $(c_j)$, $(c_j^*)$, $C_i$, $i$) 
\ENDFOR
\RETURN $\mathcal{C}$
\end{algorithmic}
\label{algTLift2}
\end{algorithm}

The outline of this algorithm is the same as that of the original version of $\tilde{T}$.  First the sides of $\mathcal{C}$ are constructed, then the side decorations, then the vertices, and finally the vertex decorations.  Each individual step, though, is made more complicated by the possibility of degeneracies.  

The subroutine \ConstructVertexB\ works with the triangle pictured in Figure \ref{figConVert2}.  All of the components of the triangle besides $C_i$ have already been constructed.  Hence by Proposition \ref{propThirdVert}, it is always possible to construct $C_i$.

\begin{figure}
\begin{pspicture}(5,4)
\pspolygon[showpoints=true](1,3)(3,1)(4,3)
\uput[ul](1,3){$B_{i-\frac{1}{2}}$}
\uput[d](3,1){$C_i$}
\uput[ur](4,3){$B_{i+\frac{1}{2}}$}

\uput{2pt}[dr](3.75,2.5){$c_{i-\frac{1}{2}}$}
\uput{2pt}[u](3,3){$b_i$}
\uput{2pt}[dl](2,2){$c_{i+\frac{1}{2}}$}
\end{pspicture}
\caption{The triangle used in \ConstructVertexB}
\label{figConVert2}
\end{figure}

The subroutine \DecorateVertexB\ begins from this same triangle.  If $c_{i-\frac{1}{2}} \neq c_{i+\frac{1}{2}}$ then \eqref{triangle} is used to compute $C_i^*$.  Otherwise, more data is needed.  Consider the complete quadrilateral in Figure \ref{figDecVert}.  We are assuming that two of its sides, namely $c_{i-\frac{1}{2}}$ and $c_{i+\frac{1}{2}}$, are equal.  This forces the five vertices other than $A_i$ to lie on the common line.  Generically, these five vertices are distinct forcing the two remaining sides (namely $b_{i-1}$ and $b_{i+1}$) of the complete quadrilateral to be equal to each other and to $c_{i-\frac{1}{2}}=c_{i+\frac{1}{2}}$.  This puts us in the situation of Proposition \ref{propMultiConj}.  All vertices besides $C_i$ and all sides have been decorated already, so the procedure can determine the decoration on $C_i$. 

We used above the fact that if two sides of a complete quadrilateral are equal, then generically, they all must be equal.  However, non-generic example where this fails will arise in practice.  For instance, in Figures \ref{figAB2} and \ref{figBC2} we have $c_{3.5}=c_{4.5}$ but $b_3$ and $b_5$ are different.  In these situations, \DecorateVertexB\ will simply return a random decoration, i.e., a random line passing through $C_i$.

The subroutines \ConstructSideB\ and \DecorateSideB\ behave like \ConstructVertexB\ and \DecorateVertexB\ respectively.  The difference is that they operate with configurations which are projectively dual to the ones in Figures \ref{figConVert2} and \ref{figDecVert}.  First, Figure \ref{figConSide2} shows the triangle used by \ConstructSideB.  This procedure constructs $c_j$ in the dual manner to how \ConstructVertexB\ finds $C_i$ in Figure \ref{figConVert2}.  

\begin{figure}
\begin{pspicture}(5,4)
\pspolygon[showpoints=true](1,1)(3,3)(4,1)
\uput[dl](1,1){$B_{j-1}$}
\uput[u](3,3){$B_j$}
\uput[dr](4,1){$B_{j+1}$}

\uput{2pt}[ur](3.75,1.5){$b_{j+\frac{1}{2}}$}
\uput{2pt}[d](3,1){$c_j$}
\uput{2pt}[ul](2,2){$b_{j-\frac{1}{2}}$}
\end{pspicture}
\caption{The triangle used in \ConstructSideB}
\label{figConSide2}
\end{figure}

Similarly, Figure \ref{figDecSide} contains a configuration that is projectively dual to the complete quadrilateral in Figure \ref{figDecVert}.  As such, \DecorateSideB\ can find $c_j^*$ via the projective dual of the construction used by \DecorateVertexB.

The case where \DecorateVertexB\ outputs a random line, and the analogous case of \DecorateSideB, are not currently justified.  However, all other cases are covered by Propositions \ref{propThirdVert} and \ref{propMultiConj}.  Hence we have the following correctness property of Algorithm \ref{algTLift2}.

\begin{prop}
\label{propT2Works}
Let $A(t)$ be a curve in the space of twisted polygons that is generic away from $t=0$.  Let $B(t) = T(A(t))$ and $C(t)=T(B(t))$ for $t \neq 0$.  Let $\mathcal{A}$, $\mathcal{B}$, and $\mathcal{C}$ be the decorated polygons associated to these curves.  Suppose that
\begin{align}
\label{condition1}
(B_{j-1} = B_{j+1} \Longrightarrow A_{j-\frac{1}{2}} = A_{j+\frac{1}{2}} = B_{j-1}) &\textrm{ for all $j$} \\
\label{condition2}
(c_{i-\frac{1}{2}} = c_{i+\frac{1}{2}} \Longrightarrow b_{i-1}=b_{i+1} = c_{i-\frac{1}{2}}) &\textrm{ for all $i$}
\end{align}
where $j$ and $i$ run over the vertex indices of $B$ and $C$ respectively.  Then $\mathcal{C} = \tilde{T}_2(\mathcal{A},\mathcal{B})$.
\end{prop}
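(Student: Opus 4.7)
The proof will proceed loop-by-loop through Algorithm~\ref{algTLift2}, verifying that each subroutine returns the true limit as $t \to 0$ of the corresponding object produced by the ordinary pentagram map on $B(t)$. All geometric content has already been isolated in the triangle identity \eqref{triangle}, Proposition~\ref{propThirdVert}, and Proposition~\ref{propMultiConj}; conditions \eqref{condition1} and \eqref{condition2} serve only to guarantee that the configurations arising in the algorithm always fall into one of the two cases covered by those propositions, so that the ``random'' branch of \DecorateVertexB\ or \DecorateSideB\ is never invoked.

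For the two construction loops, each call to \ConstructSideB\ or \ConstructVertexB\ operates on the triangle of Figure~\ref{figConSide2} or~\ref{figConVert2}, in which five of the six objects-plus-decorations are supplied by $\mathcal{B}$ or by the previous loop. Proposition~\ref{propThirdVert} recovers the missing vertex (dually, missing side) in every possible degeneration; since the underlying triangle relation holds along the entire curve, continuity in $t$ identifies the recovered value with the true limit $\lim_{t \to 0} C_i(t)$ or $\lim_{t \to 0} c_j(t)$. No hypothesis beyond generic behavior of $A(t)$ for $t \neq 0$ is required at this stage.

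The two decoration loops split into two cases. If the triangle in question is non-degenerate --- for \DecorateVertexB, this is the case $c_{i-\frac{1}{2}} \neq c_{i+\frac{1}{2}}$ --- then \eqref{triangle} pins down the missing decoration from the other five, and persistence of the relation in $t$ again identifies it with the correct limit. If two sides of the triangle coincide, condition \eqref{condition2} forces $b_{i-1} = b_{i+1} = c_{i-\frac{1}{2}} = c_{i+\frac{1}{2}}$, which is exactly the collapse hypothesis of the complete quadrilateral in Figure~\ref{figDecVert} needed by Proposition~\ref{propMultiConj}. That proposition's auxiliary construction then recovers $C_i^*$ from the other decorations. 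Hypothesis \eqref{condition1} plays the projectively dual role for \DecorateSideB, collapsing the dual of Figure~\ref{figDecSide}. Crucially, the combination of \eqref{condition1}--\eqref{condition2} with the two propositions leaves no room for an intermediate pathology in which only some, but not all, of the four sides of the quadrilateral coincide, and hence the random branch is never triggered.

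The main subtlety is verifying that Proposition~\ref{propMultiConj}'s auxiliary construction reproduces the true limiting decoration rather than some other line through $C_i$. For this I would lift each triangle appearing in Figures~\ref{figDecTripConj1} and~\ref{figDecTripConj2} to a one-parameter family which is nondegenerate for small $t \neq 0$ and collapses to the pictured limit at $t = 0$. Proposition~\ref{decRel} applies in the family, and passage to the limit forces the computed decoration to equal $\lim_{t \to 0} C_i^*(t)$; independence from the random choices $c^*, P^*, (C')^*$ is built into the uniqueness clause of Proposition~\ref{propMultiConj}. The dual argument handles \DecorateSideB. Once all four loops are justified, $\mathcal{C} = \tilde{T}_2(\mathcal{A},\mathcal{B})$ follows.
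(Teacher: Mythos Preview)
Your proposal is correct and matches the paper's own argument. The paper does not give a separate formal proof block for this proposition; instead, the justification is the running discussion in Section~7 of how \ConstructSideB, \ConstructVertexB, \DecorateSideB, and \DecorateVertexB\ operate (invoking Proposition~\ref{propThirdVert} for the construction loops and either \eqref{triangle} or Proposition~\ref{propMultiConj} for the decoration loops), together with the remark that conditions \eqref{condition1}--\eqref{condition2} are precisely what rule out the ``random'' branch --- exactly the decomposition you give.
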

More specifically, if \eqref{condition1} fails for some $j$, then \DecorateSideB\ chooses a random decoration for $c_j$.  If \eqref{condition2} fails for some $i$, then \DecorateVertexB\ chooses a random decoration for $C_i$.  Otherwise, the algorithm $\tilde{T}_2$ behaves deterministically and correctly. 

\section{The main algorithm}
The goal of our main algorithm is to construct $B = T^m(A)$ from $A$ when the usual construction fails, i.e. when $A$ is a singular point of various $T^k$ for $k<m$.  According to the previous section, it is typically possible to construct and decorate $T^2(A)$ given $A$, $T(A)$, and the corresponding decorations, even when singularities arise.  The main construction, given in Algorithm \ref{algMain}, simply iterates this procedure.

\begin{algorithm}
\caption{\main($A$, $m$)}
\begin{algorithmic}
\STATE $\mathcal{A}$ := \DecorateRandomly($A$)
\STATE Iterates[0] := $\mathcal{A}$
\STATE Iterates[1] := $\tilde{T}(A)$
\FOR{$k:=2$ \TO $m$}
\STATE Iterates[$k$] := $\tilde{T}_2$(Iterates[$k-2$], Iterates[$k-1$])
\ENDFOR
\STATE $\mathcal{B}$ := Iterates[$m$]
\RETURN $B$
\end{algorithmic}
\label{algMain}
\end{algorithm}

Given $S \subseteq \{1,2,\ldots,n\}$ such that singularity confinement holds on $X_S$, let $m$ be the smallest positive integer such that $T^m$ is generically defined on $X_S$.  We want to say for generic $A \in X_S$ that the main algorithm, given $A$ and $m$ as input, produces $T^m(A)$.  For the simplest singularity types, $S = \{i\}$, this result follows from Propositions \ref{propTTildeWorks} and \ref{propT2Works}.  

For more complicated $S$, a difficulty arises because the assumptions \eqref{condition1} and \eqref{condition2} in Proposition \ref{propT2Works} will not hold at every step.  Hence, some applications of $\tilde{T}_2$ in the main algorithm will produce random decorations.  To prove correctness of the algorithm for such $S$, it is necessary to determine at which steps this occurs and to demonstrate that the outcome is independent of the random choices.  

We will focus our attention on the types covered by Theorem \ref{thmSingConf}.  Taking $i=0$ for convenience in the Theorem, let $S=[-(m-1),(m-1)]_2$.  Suppose $A \in X_S$ is generic.  Tracing through the beginning of the main algorithm, let $\mathcal{A}$ be some decoration of $A$, let $\mathcal{B} = \tilde{T}(\mathcal{A})$, and let $\mathcal{C} = \tilde{T}_2(\mathcal{A},\mathcal{B})$.  

Since $A \in X_S$, the $A_i$ for $i \in [-m-1,m+1]_2$ all lie on a common line, say $l$.  It follows (see e.g. Figures \ref{figAB1}, \ref{figBC1}, \ref{figAB2}, and \ref{figBC2}) that $b_i=l$ for $i \in [-m,m]_2$ and $c_j = l$ for $j \in [-m+1/2,m-1/2]_1$.  Consequently, condition \eqref{condition2} holds for $i \in [-m+1,m-1]_2$ but fails for $i \in [-m+2,m-2]_2$ (assuming $m \geq 2$).  As such the corresponding $C_i$ are decorated randomly.

To establish that \main($A$, $m+2$) = $T^{m+2}(A)$ for $A$ as above, we need to prove two facts.  The first is that the output of the algorithm does not depend on the decorations of the $C_i$ that are chosen randomly.  For given $m$, we can check this computationally by showing that any such choice of decorations is possible for an appropriate choice of representative $A(t)$ of $\mathcal{A}$.  The second fact is that no other violations of \eqref{condition1} or \eqref{condition2} occur until computing Iterates[$m+2$] in the last step.  For given $m$, it suffices to check that this fact is true for a single $\mathcal{A}$ as it then follows for generic examples.

We have no general proof for the necessary facts, but we have verified that they hold for the first several values of $m$.  Assuming them, we can repeatedly apply Proposition \ref{propT2Works} to conclude that Iterates[$k$] is the decoration corresponding to the curve $T^k(A(t))$ for some curve $A(t)$ through $A$ and all $k < m+2$.  At the last step condition \eqref{condition1} will fail in some places, so Iterates[$k+2$] will have some randomly decorated sides.  However, the sides of Iterates[$k+2$] themselves will be correct proving that its underlying polygon is in fact $T^{m+2}(A)$.    Hence we get that the main algorithm works correctly for polygons of type $[-m+1,m-1]_2$ when $m$ is small.  We expect that this result holds for all $m$.

\begin{rem}
The main algorithm is stated without regard to a particular singularity type.  Hence it has the potential to work in greater generality than is discussed above.  Experiments indicate that the algorithm does work for many, but not all, other singularity types.  The simplest types for which it fails are $S=\{3,4,6\}$ and similar.
\end{rem}

\appendix
\section{Some basic constructions}
This appendix states and proves straightedge constructions for the primitives used in the algorithms throughout the paper.  The first, namely \TripleConjugate, is given in Algorithm \ref{algTripConj}.  This construction was shown to me by Pavlo Pylyavskyy.

\begin{algorithm}
\caption{\TripleConjugate($A,B,C,D,E$)}
\begin{algorithmic}
\STATE $P$ := \RandomPoint()
\STATE $C'$ := \RandomPointOn($\join{C}{P}$)
\STATE $B'$ := $\meet{\join{B}{P}}{\join{A}{C'}}$
\STATE $D'$ := $\meet{\join{D}{P}}{\join{C'}{E}}$
\STATE $F$ := $\meet{\join{B'}{D'}}{\join{A}{E}}$
\RETURN $F$
\end{algorithmic}
\label{algTripConj}
\end{algorithm}

This algorithm constructs points $B'$, $C'$, $D'$ and $F$ such that $(A,B',C',D',E,F)$ is a Menelaus configuration (see Figure \ref{figTripConj}).  Therefore $[A,B',C',D',E,F]=-1$.  Applying a projective transformation we may assume that $P$ is a point at infinity.  The lines $\join{B}{B'}$, $\join{C}{C'}$, and $\join{D}{D'}$ all pass through this point, so they must be parallel.  Therefore $\triangle ABB'$ is similar to $\triangle ACC'$, so $\frac{AB}{BC} = \frac{AB'}{B'C'}$.  Also, $\triangle EDD'$ is similar to $\triangle ECC'$, so $\frac{CD}{DE} = \frac{C'D'}{D'E}$.  It follows that
\begin{displaymath}
[A,B,C,D,E,F] = [A,B',C',D',E,F] = -1
\end{displaymath}
as desired.

\begin{figure}
\begin{pspicture}(8,6)
\pnode(1,1){A}
\pnode(2,1){B}
\pnode(2.9,1){C}
\pnode(4.7,1){D}
\pnode(5.3,1){E}
\pnode(7.14,1){F}
\pnode(2,5){P}
\pnode(2,3.62){B1}
\pnode(2.2,4.13){C1}
\pnode(3.42,2.89){D1}
\uput[d](A){$A$}
\uput[d](B){$B$}
\uput[d](C){$C$}
\uput[d](D){$D$}
\uput[d](E){$E$}
\uput[d](F){$F$}
\uput[u](P){$P$}
\uput[l](B1){$B'$}
\uput{2pt}[ur](C1){$C'$}
\uput[ur](D1){$D'$}

\ncline{A}{F}
\ncline{C}{P}
\ncline{A}{C1}
\ncline{E}{C1}
\ncline{B}{P}
\ncline{D}{P}
\ncline{B1}{F}

\end{pspicture}
\caption{The construction of \TripleConjugate}
\label{figTripConj}
\end{figure}

Next, Algorithm \ref{algProjTrans} inputs four points $A$, $B$, $C$, $D$, on one line, and three points $A'$, $B'$, $C'$ on another.  There exists a unique projective transformation from the first line to the second taking $A$ to $A'$, $B$ to $B'$, and $C$ to $C'$.  The algorithm returns the result of applying this projective transformation to $D$.

\begin{algorithm}
\caption{\ProjectiveTransformation($A,B,C,D,A',B',C'$)}
\begin{algorithmic}
\STATE $l$ := \RandomLineThrough($A'$)
\STATE $P$ := \RandomPointOn($\join{A}{A'}$)
\STATE $B''$ := $\meet{l}{\join{B}{P}}$
\STATE $C''$ := $\meet{l}{\join{C}{P}}$
\STATE $Q$ := $\meet{\join{B''}{B'}}{\join{C''}{C'}}$
\STATE $D''$ := $\meet{l}{\join{D}{P}}$
\STATE $D'$ := $\meet{\join{D''}{Q}}{\join{A'}{B'}}$
\RETURN $D'$
\end{algorithmic}
\label{algProjTrans}
\end{algorithm}

The algorithm selects a line $l$ and a point $P$ such that projection through $P$ onto $l$ sends $A$ to $A'$.  The images of $B$, $C$, are called $B''$ and $C''$ respectively.  Then $Q$ is constructed so that projection through $Q$ onto the target line sends $B''$ to $B'$ and $C''$ to $C'$, while necessarily fixing $A'$ (see Figure \ref{figProjTrans}).  Hence, the composition of these two projections is the desired projective transformation.  Applying it to $D$ gives the output $D'$.

\begin{figure}
\begin{pspicture}(6,5)
\cnode*(1,1){1.8pt}{A}
\cnode*(2.8,1){1.8pt}{B}
\cnode*(5.6,1){1.8pt}{C}
\cnode*(4.46,4.63){1.8pt}{A1}
\cnode*(2.47,4.44){1.8pt}{B1}
\cnode*(.84,4.28){1.8pt}{C1}
\cnode*(1.75,2.8){1.8pt}{B2}
\cnode*(1.25,2.46){1.8pt}{C2}
\cnode*(2.12,2.17){1.8pt}{P}
\cnode*(1.37,1.93){1.8pt}{Q}

\uput[d](A){$A$}
\uput[d](B){$B$}
\uput[d](C){$C$}
\uput[u](A1){$A'$}
\uput[u](B1){$B'$}
\uput[u](C1){$C'$}
\uput{2pt}[ul](B2){$B''$}
\uput[l](C2){$C''$}
\uput[20](P){$P$}
\uput[l](Q){$Q$}

\ncline{A}{C}
\ncline{A1}{C1}
\ncline{A}{A1}
\ncline{A1}{C2}
\nbput[labelsep=2pt]{$l$}
\ncline{B}{B2}
\ncline{C}{C2}
\ncline{B1}{Q}
\ncline{C1}{Q}

\end{pspicture}
\caption{Part of the construction used in \ProjectiveTransformation}
\label{figProjTrans}
\end{figure}

Cross ratios are invariant under projective transformation.  Hence $[A',B',C',D'] = [A,B,C,D]$ and $D'$ is the unique point on the line containing $A',B',C'$ with this property.  As such, we use this construction to find a point $B$ satisfying \eqref{degTriangle}.  This appears to be a more complicated situation because one of the cross ratios is inverted, and also because both points and lines are involved.  The identity can be expressed in terms of points alone using the fact that $[l_1,l_2,l_3,l_4] = [\meet{l_1}{l},\meet{l_2}{l},\meet{l_3}{l},\meet{l_4}{l}]$ for any other line $l$.  The reciprocal can be eliminated by reordering via the property $[P_1,P_2,P_3,P_4]^{-1} = [P_1,P_4,P_3,P_2]$.  

Another component of several of our algorithms involves finding one point or line from \eqref{triangle} in terms of the others.  By similar remarks to before, it is possible to cast this as a problem involving points alone, namely to construct $F'$ from the other points assuming
\begin{displaymath}
[A,B,C,D,E,F] = [A',B',C',D',E',F']
\end{displaymath}
Here, not all points are assumed to be collinear, only those triples required by the definition of triple ratios.

\begin{lem}
$[A,B,C,D,E,F] = [A,P,E,F]$ where
\begin{displaymath}
P = \meet{\join{C}{(\meet{\join{A}{D}}{\join{B}{E}})}}{\join{A}{E}}
\end{displaymath}
(see Figure \ref{figCeva}).
\end{lem}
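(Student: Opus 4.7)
The plan is to exploit the fact that the auxiliary point $Q=\meet{\join{A}{D}}{\join{B}{E}}$ makes the three lines $\join{A}{D}$, $\join{E}{B}$, $\join{C}{P}$ concurrent cevians of triangle $ACE$ (with $B$ on side $AC$, $D$ on side $CE$, and $P$ on side $EA$ by construction). Ceva's theorem, as stated earlier in the paper, then gives
\begin{equation*}
[A,B,C,D,E,P] = \frac{AB}{BC}\cdot\frac{CD}{DE}\cdot\frac{EP}{PA} = 1.
\end{equation*}

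From here the proof is a short manipulation. First I would rewrite the Ceva identity as
\begin{equation*}
\frac{AB}{BC}\cdot\frac{CD}{DE} = \frac{PA}{EP},
\end{equation*}
and substitute this into the definition of the triple ratio $[A,B,C,D,E,F]$, yielding
\begin{equation*}
[A,B,C,D,E,F] = \frac{PA}{EP}\cdot\frac{EF}{FA}.
\end{equation*}
The final step is to recognize the right-hand side as the cross ratio $[A,P,E,F]$ of four collinear points on $\join{A}{E}$. This is a direct verification: choose an affine coordinate on $\join{A}{E}$ placing $A=0$ and $E=1$, write $P=p$, $F=f$, and compute both $\frac{PA}{EP}\cdot\frac{EF}{FA}$ and $[A,P,E,F]=\frac{(A-P)(E-F)}{(P-E)(F-A)}$; both expressions reduce to $\frac{p(1-f)}{(1-p)f}$.

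The only subtlety is keeping signs consistent between the signed segment ratios in the definition of the triple ratio and the signed differences in the paper's convention for the cross ratio; once that is handled, there is no real obstacle, and essentially all the geometric content of the lemma is concentrated in the invocation of Ceva's theorem on triangle $ACE$ with cevians through $Q$.
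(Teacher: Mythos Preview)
Your argument is correct and follows essentially the same route as the paper's proof: both hinge on Ceva's theorem applied to triangle $ACE$ with cevians through $Q$, yielding $[A,B,C,D,E,P]=1$. The paper packages the remaining manipulation as the general identity $\dfrac{[A,B,C,D,E,F]}{[A,B,C,D,E,P]}=[A,P,E,F]$ for any $P$ on $\join{A}{E}$, while you unpack this same computation by substituting the Ceva relation directly into the triple ratio; the content is identical.
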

\begin{proof}
For any point $P$ on $\meet{A}{E}$, we have
\begin{displaymath}
\frac{[A,B,C,D,E,F]}{[A,B,C,D,E,P]} = [A,P,E,F]
\end{displaymath}
For the particular $P$ chosen, Ceva's theorem guarantees that $[A,B,C,D,E,P] = 1$.
\end{proof}

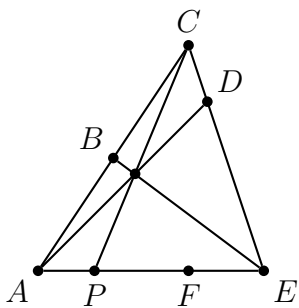
\begin{figure}
\begin{pspicture}(5,5)
\cnode*(1,1){2pt}{A}
\cnode*(2,2.5){2pt}{B}
\cnode*(3,4){2pt}{C}
\cnode*(3.25,3.25){2pt}{D}
\cnode*(4,1){2pt}{E}
\cnode*(3,1){2pt}{F}
\cnode*(2.29,2.29){2pt}{Q}
\cnode*(1.75,1){2pt}{P}

\uput[dl](A){$A$}
\uput[ul](B){$B$}
\uput[u](C){$C$}
\uput[ur](D){$D$}
\uput[dr](E){$E$}
\uput[d](F){$F$}
\uput[d](P){$P$}

\ncline{A}{C}
\ncline{C}{E}
\ncline{E}{A}
\ncline{A}{D}
\ncline{C}{P}
\ncline{E}{B}
\end{pspicture}
\caption{The construction of a point $P$ satisfying $[A,B,C,D,E,F] = [A,P,E,F]$}
\label{figCeva}
\end{figure}

In light of this lemma, it is easy to construct the point $F'$ above.

\begin{algorithm}
\caption{\ProjectiveTransformationB($A,B,C,D,E,F,A',B',C',D',E'$)}
\begin{algorithmic}
\STATE $P$ := $\meet{\join{C}{(\meet{\join{A}{D}}{\join{B}{E}})}}{\join{A}{E}}$
\STATE $P'$ := $\meet{\join{C'}{(\meet{\join{A'}{D'}}{\join{B'}{E'}})}}{\join{A'}{E'}}$
\STATE $F'$ := \ProjectiveTransformation($A,P,E,F,A',P',E'$)
\RETURN $F'$
\end{algorithmic}
\label{algProjTrans2}
\end{algorithm}

\end{document}